\def\Pr{\textup{P}}
\def\be{\begin{equation}}
\def\ee{\end{equation}}
\def\bea{\begin{equation*}}
\def\eea{\end{equation*}}
\def\begs{\begin{split}}
\def\ends{\end{split}}
\newtheorem{thm}{Theorem}
\newtheorem{lma}[thm]{Lemma}
\newtheorem{prop}[thm]{Proposition}
\newtheorem{claim}{Claim}
\newtheorem*{thm*}{Theorem}
\newtheorem*{prop*}{Proposition}
\theoremstyle{remark}
\newtheorem{preremark}{Remark}
\newtheorem{preex}[thm]{Example}
\newenvironment{remark}{\begin{preremark}}{\end{preremark}}
\numberwithin{equation}{section}
\title{Arm events in two-dimensional invasion percolation}
\author{Michael Damron \thanks{The research of M. D. is supported by NSF grant DMS-0901534 and an NSF CAREER grant.} \\ \small{Georgia Tech}  \and Jack Hanson \thanks{The research of J. H. is supported by NSF grant DMS-1612921.}\\ \small{City College, CUNY} \and Philippe Sosoe\thanks{The research of P. S. is supported by the Center for Mathematical Sciences and Applications at Harvard University.} \\ \small{CMSA, Harvard}}
\begin{document}

\maketitle

\abstract{We compare the probabilities of ``arm events'' in two-dimensional invasion percolation to those in critical percolation. Arm events are defined by the existence of a prescribed ``color sequence'' of invaded and non-invaded connections from the origin to distance $n$. We find that, for sequences of a particular form, arm probabilities in invasion percolation and critical percolation are comparable, uniformly in $n$, while they differ by a power of $n$ for all others. A corollary of our results is the existence, on the triangular lattice, of arm exponents for invasion percolation, for any color sequence with at least two open (invaded) entries.}

\section{Introduction}

In this paper, we consider two-dimensional invasion percolation, a stochastic growth model, and compare probabilities of ``arm events'' to analogous ones in critical percolation. These events, defined by the occurrence of a sequence of disjoint invaded and non-invaded connections from the origin to a large distance, play a central role in percolation theory. 

The probabilities of arm events in critical and near-critical Bernoulli percolation have been studied extensively. In these settings, arm probabilities are expected to behave like power functions of the distance from the origin, and are thus naturally associated to the corresponding exponents. Existence and exact values of the exponents have been derived for the one-arm event \cite{lawlerschrammwerner}, as well as events associated to polychromatic sequences \cite{schrammwerner}, which include both open and closed dual connections, for percolation on the triangular lattice, using conformal invariance and the SLE processes. Existence of exponents for monochromatic arm sequences was proved in \cite{beffaranolin}, where it was also shown that the exponents differ from polychromatic exponents. To the best of our knowledge, arm probabilities in invasion percolation have not been studied previously.

Critical percolation is a natural point of comparison for invasion percolation because the latter process is an example of self-organized criticality, loosely defined as the tendency for a parameterless model to behave in the large time limit like a parametric model at its critical point. It has been shown, for instance, that the cluster volume, multi-point functions, and expected cluster sizes are of the same order for invasion and critical percolation \cite{DSV, jarai}.

Arm events are characterized up to permutation by a finite sequence specifying the status (invaded, resp. open, or non-invaded, resp. closed) and the order in which the connections appear. Our results imply that, for some color sequences, arm probabilities in invasion percolation are comparable to the corresponding arm probabilities in critical percolation. (See Theorem \ref{thm: at_least_two}.) This theorem also implies existence of exponents, in the sense of \cite{schrammwerner} on the triangular lattice, for all color sequences that include at least two invaded (open) arms. For all other color sequences, we show in Theorem \ref{thm: only_one_a} that the probabilities differ from those in critical percolation by a positive power of the distance. Combining the two theorems, we find in particular that polychromatic arm exponents of a given length in invasion percolation, if they exist, must generally depend on the exact color sequence. This is unlike critical (site) percolation, where a ``color-switching'' argument shows that arm probabilities are the same for all sequences of a given length; see \cite[Proposition 20]{nolin}.

\subsection{Invasion Percolation}
The invasion percolation cluster (IPC) is defined recursively as follows. Let $(t_e)$ be an i.i.d. family of uniform $[0,1]$ random variables, one assigned to each edge $e \in \mathcal{E}^2$ of the nearest-neighbor lattice $\mathbb{Z}^2$. Our growth model begins with vertex and edge set 
\[
V_0 = \{0\},~ E_0 = \emptyset,~ G_0 = (V_0,E_0).
\]
At each time $n$, let $e_n = \{x_n,y_n\}$ be the edge in the boundary
\[
\partial G_{n-1} = \{\{x,y\} : x \in V_{n-1}, \{x,y\} \notin E_{n-1}\}
\]
of $G_{n-1}$ such that $t_{e_n}$ is minimal, and define
\[
V_n = V_{n-1} \cup \{y_n\},~ E_n = E_{n-1} \cup \{e_n\},~G_n = (V_n,E_n).
\]
As $(G_n)$ is an increasing sequence of graphs, we define the IPC as $S = \lim_n G_n$.

To define critical percolation, we pick any $p \in [0,1]$ and say that the edge $e$ is $p$-open if $t_e < p$ and $p$-closed otherwise. A $p$-open cluster is a maximal set of vertices any two of which are connected by a path of $p$-open edges. For $p$ small, almost surely, there are only finite $p$-open clusters, whereas for $p$ large, there is a unique infinite $p$-open cluster. So one defines $\theta(p)$ as the probability that $0$ is in an infinite $p$-open cluster, and
\[
p_c = \sup\{p : \theta(p) = 0\}.
\]
We define the measure $\mathbb{P}_p$ on $\{0,1\}^{\mathcal{E}^2}$ as the one under which all coordinate functions $(\omega(e) : e  \in \mathcal{E}^2)$ are i.i.d. with
\[
\mathbb{P}_p(\omega(e) = 1) = p =  1 - \mathbb{P}_p(\omega(e)=0),
\]
and refer to an edge $e$ with $\omega(e) = 1$ simply as \emph{open}; otherwise, \emph{closed}. On $\mathbb{Z}^2$, it is known that $p_c = 1/2$, and we write $\mathbb{P}_{cr}$ for $\mathbb{P}_{1/2}$.

We now describe arm events, which are our main objects of study. A ``color sequence'' $\sigma$ is a sequence, each entry of which is either the symbol $O$ (for open) or $C$ (for closed); we write $|\sigma|_O$ for the number of open entries and $|\sigma|_C$ for the number of closed entries, with $|\sigma|$ defined to be the total number of entries. For $B(n) = [-n,n]^2$, we define a $\sigma$-connection between a vertex $x$ and the set 
\[
\partial B(n) = \{z \in B(n) : \exists y \in B(n)^c \text{ such that } \|z-y\|_1 = 1\}
\]  
as follows: write $x \to_\sigma \partial B(n)$ if there exist $|\sigma|$ disjoint paths from $B(|\sigma|) + x$ to $\partial B(n)$, $|\sigma|_O$ of which are open paths, and $|\sigma|_C$ of which are closed dual paths, and their orientation is given by $\sigma$. By dual path, we refer to the associated percolation model defined on the dual lattice $(\mathbb{Z}_*^2, \mathcal{E}_*^2)$, with vertex and edge sets
\[
\mathbb{Z}^2_* = \mathbb{Z}^2 + (1/2,1/2),~ \mathcal{E}_*^2 = \mathcal{E}^2 + (1/2,1/2).
\]
A dual edge $e^*$ bisects exactly one edge $e$, and we set $\omega(e^*) = \omega(e)$. A closed dual path is a sequence of closed edges $e$ whose dual edges form a path on the dual lattice. (These arm events are always considered up to cyclic permutation of $\sigma$.) A $\sigma$ connection is defined naturally for critical percolation in terms of open and closed edges; for the IPC, we interpret ``$e$ is open'' as meaning that $e \in S$, or that $e$ is eventually invaded. Likewise, ``$e$ is closed'' means that $e \notin S$, or that $e$ is never invaded. For $1 \leq m \leq n$, the notation $\partial B(m) \to_\sigma \partial B(n)$ is used similarly: there is a $\sigma$-connection between $\partial B(m)$ and $\partial B(n)$. (If $m \leq |\sigma|$, then the inner box is taken to be $B(|\sigma|)$.)

For {two positive} sequences $(a_n)$ and $(b_n)$, write $a_n \lesssim b_n$ if the ratio $a_n/b_n$ is bounded as $n \to \infty$. If $a_n \lesssim b_n$ and $b_n \lesssim a_n$, we write $a_n \asymp b_n$.

\subsection{Main results}

If $\sigma$ is any color sequence, we define the reduced color sequence $\tilde \sigma$ by replacing any consecutive stretch of at least two `$C$' entries by two `$C$' entries. Our first theorem states that $\sigma$-arm events in invasion have probability comparable to those in critical percolation for the reduced color sequence $\tilde \sigma$, provided that $|\sigma|_O \geq 2$. Note that if $\sigma$ has no stretches of more than three `$C$' entries (for instance, in the open monochromatic case), then $\sigma = \tilde \sigma$, and so $\sigma$-arm events have comparable probability in both models.

Let $A_\sigma(n)$ be the event
\begin{equation}\label{eq: a_sigma_def}
A_\sigma(n) = \{0 \to_\sigma \partial B(n)\}.
\end{equation}
When we write $\mathbb{P}(A_\sigma(n))$, it is understood that the event $A_\sigma(n)$ is taken in the IPC $S$, so that, for example, ``open'' means ``invaded.'' As the IPC has an open path from 0 to $\infty$ almost surely, we consider only sequences with $|\sigma|_O\geq 1$. We now present the first theorem. Note that combined, both of its statements imply that if $|\sigma|_O \geq 2$, then
\[
\mathbb{P}_{cr}(A_{\tilde \sigma}(n)) \asymp \mathbb{P}(A_\sigma(n)).
\]
\begin{thm}\label{thm: at_least_two}
Let $\sigma$ be a color sequence with $|\sigma|_O\geq 1$. Then
\begin{equation}\label{eqn: thm1-lwr-bd}
\mathbb{P}_{cr}(A_{\tilde \sigma}(n)) \lesssim  \mathbb{P}(A_\sigma(n)).
\end{equation}
If $|\sigma|_O\geq 2$ then 
\begin{equation}\label{eqn: thm1-upper-bd}
\mathbb{P}(A_\sigma(n)) \lesssim \mathbb{P}_{cr}(A_{\tilde \sigma}(n)).
\end{equation}
\end{thm}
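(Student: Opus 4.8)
The plan is to prove the two bounds separately, and in each case to relate the IPC $\sigma$-arm event at scale $n$ to a critical $\tilde\sigma$-arm event by exploiting the structure of the invasion process: the IPC consists of an initial ``pond'' structure whose edge values eventually stay below $p_c + o(1)$, and outside a large random scale the invaded edges look like a critical cluster. I would first record the standard facts I plan to use: (i) the IPC from $\org$ contains a unique infinite open path; (ii) there is an a.s. finite random scale beyond which every invaded edge $e$ has $t_e < p_c + \eps$ for any fixed $\eps > 0$ (control of the ``outlets''), and conversely invaded edges sit in the $p_c$-open cluster structure in an RSW-usable way; and (iii) the critical arm probabilities satisfy quasi-multiplicativity and RSW bounds, so that $\PP_{cr}(A_{\tilde\sigma}(n)) \asymp \PP_{cr}(\partial B(m) \to_{\tilde\sigma} \partial B(n)) \cdot \PP_{cr}(A_{\tilde\sigma}(m))$ up to constants, uniformly.

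For the lower bound \eqref{eqn: thm1-lwr-bd}, the idea is that an IPC $\sigma$-arm event is implied by a critical-type $\tilde\sigma$-arm configuration together with the statement that the relevant open arms are actually invaded. Concretely, I would fix a large constant scale $M$, condition on a favorable (constant-probability) event inside $B(M)$ that forces $\org$ to be connected to $\partial B(M)$ by invaded edges and ``seeds'' the correct color pattern near the origin, and then build the $\sigma$-connection from $\partial B(M)$ to $\partial B(n)$ out of a critical $\tilde\sigma$-connection: the $|\sigma|_O \ge 1$ open arm(s) can be taken to lie along (or be glued to) the infinite invaded path, so they are invaded; the $C$-arms are genuinely closed (never invaded) since closedness is the same event in both models. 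The reduction of $\sigma$ to $\tilde\sigma$ here is the key geometric input: a stretch of many closed dual edges in $\sigma$ only needs, to be realized, a single closed dual arm that RSW lets us ``fatten'' — so realizing $\tilde\sigma$ (two closed arms) on an annulus suffices to realize arbitrarily long closed stretches, because extra closed arms beyond two can be absorbed. Using independence of disjoint regions, quasi-multiplicativity, and FKG, this yields $\PP(A_\sigma(n)) \gtrsim \PP_{cr}(A_{\tilde\sigma}(n))$.

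For the upper bound \eqref{eqn: thm1-upper-bd}, which requires $|\sigma|_O \ge 2$, the strategy is the reverse: show $A_\sigma(n)$ in the IPC implies, with comparable probability, a critical $\tilde\sigma$-arm event. The point is that invaded open arms are in particular $p_c$-open arms once we are past the random scale where invaded edges have values below $p_c + \eps$; combined with the closed arms (which are closed at $p_c$ as well, by definition), this gives a near-critical $\tilde\sigma$-arm event from some scale $M_0$ to $n$, and a standard $\eps$-comparison / monotonicity argument together with the fact that critical and slightly-supercritical arm probabilities at scales below the correlation length are comparable upgrades this to a genuine $\PP_{cr}$ bound. The role of $|\sigma|_O \ge 2$ is crucial and is where I expect the main obstacle: with only one open arm, that arm could be ``for free'' the infinite invasion backbone, which carries no cost comparable to a critical open arm, so the upper bound would fail (this is exactly the content of Theorem \ref{thm: only_one_a}); with at least two open arms, at least one of them must be a genuine open connection not subsidized by the backbone, and one must argue carefully — presumably by a separation-of-arms / conditioning argument at a well-chosen scale, handling the random scale where invasion values drop below $p_c+\eps$ — that the cost of that arm plus the others is bounded below by the critical $\tilde\sigma$-arm cost. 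Controlling this random scale uniformly in $n$ (so that the comparison constants do not degrade) and making the ``fattening of long closed stretches'' argument rigorous via RSW are the two technical hearts of the proof.
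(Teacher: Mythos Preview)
Your proposal has a fundamental confusion that undermines both directions: you treat ``closed in the IPC'' (meaning \emph{never invaded}) as if it were the same as ``$p_c$-closed,'' and ``open in the IPC'' (meaning \emph{invaded}) as if it were the same as ``$p_c$-open.'' Neither identification holds. Invaded edges can have weights above $p_c$ (the outlets do), so an invaded arm is not automatically a $p_c$-open arm. Conversely, a $p_c$-closed edge can be invaded (outlets are $p_c$-closed), and a $p_c$-open edge can fail to be invaded (if it sits in a finite $p_c$-cluster not touched by the invasion). So your statement that ``the $C$-arms are genuinely closed (never invaded) since closedness is the same event in both models'' is false, and the whole scaffolding built on it collapses.

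For the lower bound, your ``fatten a closed arm by RSW'' idea is not the mechanism. The paper gets the extra non-invaded arms for free by first constructing an outlet $e$ in $Ann(n,2n)$: a $p_n$-closed dual circuit around $0$ with a single defect at $e$, with $e$ connected to $\infty$ by a $p_n$-open path and to the origin by a $p_c$-open path. This guarantees that the region between any two adjacent $p_n$-closed dual arms (both connected to the circuit) is literally never invaded, so \emph{any} dual path in that region is a closed arm in the IPC. But for this to produce $k$ extra arms, the two $p_n$-closed duals must stay at Euclidean distance $\geq k$; the paper proves this separation via a six-arm estimate (Reimer plus the universal five-arm exponent). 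You have neither the outlet construction nor the separation argument.

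For the upper bound, your account of why $|\sigma|_O \geq 2$ matters is off. It is not that a second open arm is ``unsubsidized.'' The paper decomposes on the location of the first outlet $\hat e_1$. If $\hat e_1 \in B(n/2)$, then just before $\hat e_1$ is invaded the invasion is surrounded by a closed dual circuit through $\hat e_1^*$; since $\hat e_1$ lies on only one of the $\geq 2$ invaded arms, the circuit must enclose the \emph{other} invaded arm and therefore reach $\partial B(n)$, producing two long closed dual crossings of $Ann(n/2,n)$ with exponentially small probability. This is the cost that lets the J\'arai $\log^*$ summation close. You also miss the actual reason $\tilde\sigma$ appears: between two consecutive invaded arms separated by $\ell \geq 1$ non-invaded arms, a duality argument (extremal non-invaded arms bound regions where no $p_c$-open path can escape) yields exactly $\min(\ell,2)$ genuine $p_c$-closed dual arms---not more---which is why consecutive $C$'s collapse to at most two.
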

\begin{remark}
It follows immediately from Theorem \ref{thm: at_least_two} that if $\sigma$ has a least two $O$ entries and no more than two consecutive $C$ entries, the probability of the corresponding arm events in invasion and critical percolation are comparable, uniformly in $n$.
\end{remark}
\begin{remark}
For any sequence $\sigma$ with $|\sigma|_O\ge 2$, Theorem \ref{thm: at_least_two} provides matching upper and lower bounds for the probability of the arm events.
Our proofs are easily adapted to the triangular lattice, and in this setting we obtain the existence of arm exponents in the sense of \cite[Theorem 4]{schrammwerner}.
\end{remark}

All color sequences covered in the next theorem are of the form 
\[
\sigma_k := (O, \underbrace{C, C, \ldots, C}_{k \, \text{times}}).
\]
For such arm events, we have two main bounds. The first inequality is strictly stronger than the first inequality of the previous theorem, and shows that the arm probabilities (even in the cases $k \leq 2$, where $\sigma_k = \tilde{\sigma}_k$) are not comparable in critical percolation and in the IPC.
\begin{thm}\label{thm: only_one_a}
There exists $\epsilon>0$ such that for all  $k \geq 1$,
\begin{equation}\label{eqn: thm2-first}
\mathbb{P}_{cr}(A_{\tilde{\sigma}_k}(n)) n^\epsilon \lesssim \mathbb{P}(A_{\sigma_k}(n)).
\end{equation}
For $k=1,2$, there exists $\epsilon>0$ such that
\begin{equation} \label{eqn: thm2-two}
\mathbb{P}(A_{\sigma_k}(n)) n^{\epsilon} \lesssim \mathbb{P}_{cr}(A_{\hat{\sigma}_k}(n)),
\end{equation}
where $\hat \sigma_k$ is a sequence of $k$ `$C$' entries and zero `$O$' entries.
\end{thm}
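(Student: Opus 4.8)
Everything will follow from the two‑sided estimate
\[
\mathbb{P}(A_{\sigma_k}(n)) \;\asymp\; \frac{\mathbb{P}_{cr}(A_{\tilde\sigma_k}(n))}{\pi_1(n)},\qquad \pi_1(m,n):=\mathbb{P}_{cr}(\partial B(m)\to\partial B(n)),\quad \pi_1(n):=\pi_1(1,n),
\]
valid for \emph{every} $k\ge1$. Since $\pi_1(n)\le Cn^{-\epsilon_0}$ for a universal $\epsilon_0>0$ (an a priori consequence of RSW), the lower bound in the display gives $\mathbb{P}(A_{\sigma_k}(n))\gtrsim n^{\epsilon_0}\mathbb{P}_{cr}(A_{\tilde\sigma_k}(n))$, which is \eqref{eqn: thm2-first}; and for $k\in\{1,2\}$ the upper bound in the display, combined with the strict forms of the FKG inequalities $\mathbb{P}_{cr}(A_{(O,C)}(n))\lesssim n^{-\epsilon}\pi_1(n)^2$ and $\mathbb{P}_{cr}(A_{(O,C,C)}(n))\lesssim n^{-\epsilon}\pi_1(n)\mathbb{P}_{cr}(A_{(C,C)}(n))$ — provable on $\mathbb{Z}^2$ by RSW (ordinary FKG gives the non‑strict versions) — yields \eqref{eqn: thm2-two}. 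The restriction $k\le2$ there is essential, not technical: for $k\ge3$ one has $\tilde\sigma_k=(O,C,C)$, so the left‑hand side of the display is $\asymp\mathbb{P}_{cr}(A_{(O,C,C)}(n))/\pi_1(n)$, which does not decay with $k$, while $\mathbb{P}_{cr}(A_{\hat\sigma_k}(n))$ is the monochromatic $k$‑arm probability, whose exponent grows without bound; hence \eqref{eqn: thm2-two} is simply false once $k$ is large, and in fact already for $k=3$.

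To prove the display I use the outlet/pond decomposition of $S$, the tail estimate $\mathbb{P}(\operatorname{rad}\mathcal{P}_1\asymp r)\asymp\pi_1(r)$ for the first pond $\mathcal{P}_1$, and the comparison, annulus by annulus and conditional on the already‑explored invasion, of the invasion past a given outlet with critical percolation — crucially, with the continuation of the invasion comparable to critical percolation \emph{conditioned to cross} each annulus — together with quasi‑multiplicativity and arm separation. Two facts drive it: (i) the unique invaded arm of $\sigma_k$ is free at large scales, since the invasion reaches $\partial B(n)$ regardless — this produces the factor $1/\pi_1(n)$; and (ii) in $S^c$ a closed dual arm runs along $p_c$‑closed edges only where it borders $S$, so consecutive closed dual arms need not be separated by macroscopic open arms — this is the reduction $\sigma_k\mapsto\tilde\sigma_k$, i.e.\ three or more consecutive closed arms cost no more than two, unlike in critical percolation. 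For the \emph{upper bound}: on $A_{\sigma_k}(n)$ split over the dyadic value $R\le n$ of $\operatorname{rad}\mathcal{P}_1$. Inside $B(R)$ the closed arms avoid $\mathcal{P}_1$, which connects $0$ to $\partial B(R)$, and since $\mathcal{P}_1$ behaves like a critical cluster forced to reach distance $R$, the inner contribution is $\lesssim\pi_1(R)\cdot\mathbb{P}_{cr}(A_{\sigma_k}(R))/\pi_1(R)=\mathbb{P}_{cr}(A_{\sigma_k}(R))$. Outside $B(R)$ the $k$ closed dual arms avoid the continuation of the invasion; applying the comparison across the $\asymp\log(n/R)$ annuli and using that the continuation is like critical percolation conditioned to cross, the probability of $k$ disjoint non‑invaded dual arms in $B(n)\setminus B(R)$ is $\lesssim\mathbb{P}_{cr}(A_{\tilde\sigma_k}(R,n))/\pi_1(R,n)$ (the reduction by (ii)). Multiplying and summing over $R$ by quasi‑multiplicativity (the sum being dominated by bounded $R$) gives $\mathbb{P}(A_{\sigma_k}(n))\lesssim\mathbb{P}_{cr}(A_{\tilde\sigma_k}(n))/\pi_1(n)$.

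For the \emph{lower bound} I build the event at a fixed scale $\ell_0$ and propagate. By Theorem \ref{thm: at_least_two} and arm separation, with probability $\gtrsim1$ uniformly in $n$ the invasion realizes the cyclic pattern $\sigma_k$ from $B(|\sigma|)$ to $\partial B(\ell_0)$, with all arms landing in prescribed, well‑separated boundary sectors. Working then in $B(n)\setminus B(\ell_0)$: the comparison says the invasion there behaves like critical percolation conditioned to cross, so with conditional probability $\gtrsim\mathbb{P}_{cr}(A_{\tilde\sigma_k}(\ell_0,n))/\pi_1(\ell_0,n)$ one simultaneously has an invaded crossing landing in the open sector and, alongside it, $k$ disjoint non‑invaded dual arms in the closed sectors — the invaded arm costing nothing extra because the invasion must cross anyway, and three‑or‑more consecutive closed arms costing nothing extra over two, by (ii), because the channels of $S^c$ need not be $p_c$‑closed away from $\partial S$. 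Combining with the inner event gives $\mathbb{P}(A_{\sigma_k}(n))\gtrsim\mathbb{P}_{cr}(A_{\tilde\sigma_k}(\ell_0,n))/\pi_1(\ell_0,n)\asymp\mathbb{P}_{cr}(A_{\tilde\sigma_k}(n))/\pi_1(n)$.

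The main obstacle, shared by both halves, is the comparison in the precise form used above: that the invasion past an outlet, conditioned on the part already explored, is not merely stochastically sandwiched between near‑critical percolations but is comparable to critical percolation \emph{conditioned to have an open crossing of the annulus}, uniformly in $n$ and in the conditioning. Establishing this, and — for the lower bound — upgrading it to a genuine construction in which the invasion is forced with conditional probability $\Theta(1)$ to exit through the prescribed sector with no cheaper escape route, is where the real work lies; the remaining ingredients (the outlet/pond structure, the tail $\asymp\pi_1(r)$ of $\operatorname{rad}\mathcal{P}_1$, quasi‑multiplicativity, arm separation, and the strict ordering of critical arm exponents) are standard.
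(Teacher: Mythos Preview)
Your proposal rests entirely on the two-sided estimate $\mathbb{P}(A_{\sigma_k}(n)) \asymp \mathbb{P}_{cr}(A_{\tilde\sigma_k}(n))/\pi_1(n)$, but you do not prove it: you concede at the end that the required comparison --- invasion past an outlet behaving like critical percolation \emph{conditioned to cross} each annulus, uniformly in the conditioning --- ``is where the real work lies.'' That comparison is the entire content of your approach, and it is not established anywhere. In fact the asymptotic you claim is strictly stronger than the theorem: it would in particular force $\alpha_2=\alpha_3=\cdots$, which the paper lists as open in the remark following the statement, and the paper's remark on the IIC notes that comparing to $\mathbb{P}_{cr}(A_\sigma(n))/\pi_1(n)$ ``may be possible'' as a \emph{sharpening} --- it is not the route taken here. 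The strict inequalities $\mathbb{P}_{cr}(A_{(O,C)}(n))\lesssim n^{-\epsilon}\pi_1(n)^2$ and $\mathbb{P}_{cr}(A_{(O,C,C)}(n))\lesssim n^{-\epsilon}\pi_1(n)\,\mathbb{P}_{cr}(A_{(C,C)}(n))$ that you invoke for \eqref{eqn: thm2-two} are likewise asserted (``provable on $\mathbb{Z}^2$ by RSW'') rather than proved; RSW alone gives only the non-strict BK/Reimer bound.

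The paper avoids any such global comparison. For \eqref{eqn: thm2-first} it does not try to win the full factor $\pi_1(n)^{-1}$; it wins a bare $n^\epsilon$ by a counting argument: place an outlet in each of $\asymp N/\kappa$ disjoint annuli (with $n=2^N$), with $\kappa$ choices of sub-annulus per placement. The resulting events $K_\rho$ are pairwise disjoint (different $\rho$ force a $p_c$-open crossing of different sub-annuli), and each has probability $\geq c^{N/\kappa}\mathbb{P}_{cr}(A_{\tilde\sigma_k}(n))$ by iterated gluing; summing over the $\kappa^{N/\kappa}$ values of $\rho$ and taking $\kappa>1/c$ yields the power of $n$. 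For \eqref{eqn: thm2-two} the paper observes directly that $A_{\sigma_k}(n)\subset A_{\hat\sigma_k}(|\sigma_k|,n,p_c)$ for $k\le 2$ (any $p_c$-open circuit with at most one defect is invaded, so it blocks non-invaded dual arms), and then shows the conditional probability $\mathbb{P}(A_{\sigma_k}(n)\mid A_{\hat\sigma_k}(|\sigma_k|,n,p_c))\le n^{-\epsilon}$ by a Kesten--Zhang decoupling: in each of order $\log n$ annuli, with uniformly positive conditional probability one can build a $p$-open circuit enclosed by a $p$-closed dual circuit for some $p>p_c$, which by Lemma~\ref{lma: inv-circuit} forces an invaded circuit and hence $A_{\sigma_k}(n)^c$. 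Neither argument needs the invasion to be quantitatively comparable to conditioned critical percolation.
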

The proof of the lower bound, inequality \eqref{eqn: thm2-first}, uses a new strategy to show that conditional on an arm event to distance $n$, there is high probability for occurrence of at least order $\log n$ many ``outlet'' events defined in disjoint annuli. This method, detailed in Section~\ref{sec: thm2-lwr-bd}, is a crucial tool for our upcoming work on a strict inequality for the chemical distance exponent in critical percolation \cite{new_paper}. We believe that it could have further applications.

\begin{remark}
If an exponent $\alpha_k$ exists for the invasion arm event $A_{\sigma_k}(n)$, then by applying the previous theorem, one has
\[
\alpha_2 > \beta_2',~ \beta_2 > \alpha_1,~\text{and } \beta_3 - \epsilon \geq \alpha_k \text{ for } k \geq 2,
\]
where $\beta_2'$ is the monochromatic $(CC)$ exponent, $\beta_2$ is the polychromatic $(OC)$ exponent, and $\beta_3$ is the polychromatic $(OCC)$ exponent for critical percolation. It is believed (and proved on the hexagonal lattice \cite{beffaranolin}) that $\beta_2' > \beta_2$, so this would give $\alpha_2 > \alpha_1$, implying
\[
0 < \alpha_1 < \alpha_2 \leq \alpha_3 \leq \cdots \leq \beta_3 - \epsilon < \beta_3.
\]
It is reasonable, based on our first theorem, to believe that $\alpha_2 = \alpha_3 = \alpha_4 = \cdots$, but we do not yet have a proof.
\end{remark}

\begin{remark}
The Incipient Infinite Cluster (\emph{IIC}), is the measure defined by the limit of the conditional probabilities
\[\lim_{N\rightarrow \infty} \mathbb{P}(A\mid 0 \rightarrow \partial B(N)).\]
The limit was shown to exist in \cite{kesteniic} for cylinder sets $A$ and determines a measure $\mathbb{P}_{\mathrm{IIC}}$ on configurations in $\{0,1\}^{\mathcal{E}^2}$. An simple decoupling argument shows that for $|\sigma|_O \geq 1$,
\[
\mathbb{P}_{\mathrm{IIC}}(A_\sigma(n))\asymp \frac{\mathbb{P}_{cr}(A_\sigma(n))}{\mathbb{P}_{cr}(0 \rightarrow \partial B(n))}.
\]
It has been observed that the IIC measure, although distinct from the invasion measure, is similar in many respects \cite{DSV, jarai}. It may be possible to sharpen the results in Theorem \ref{thm: only_one_a} by comparing to IIC probabilities, instead of probabilities in critical percolation.
\end{remark}

\subsection{Notation}

Here we review notation used throughout the paper. Recall that for $n \geq 1$, $B(n) = [-n,n]^2$. For $1 \leq m \leq n$, set
\[
Ann(m,n) = B(n) \setminus B(m).
\]

Recall the definition of a $\sigma$-connection between $\partial B(m)$ and $\partial B(n)$ from the introduction, and that we are working on two different probability spaces. The first contains i.i.d. uniform random variables $(t_e)$ assigned to the edges and a corresponding measure $\mathbb{P}$. It is on this space that the IPC $S$ is defined, and on which we have notions of $p$-openness. The second is the space $\{0,1\}^{\mathcal{E}^2}$, with probability measures $\mathbb{P}_p$ under which the coordinates are i.i.d. with probability $p$ to be 1. On this first space, we define both events
\begin{align*}
A_\sigma(n) &= \{0 \to_\sigma \partial B(n) \text{ in }S\} \\
A_\sigma(m,n) &= \{\partial B(m) \to_\sigma \partial B(n) \text{ in }S\}.
\end{align*}
We also set $A_\sigma(n,p,q)$ as the event that $0 \to_\sigma \partial B(n)$ but the open arms are $p$-open, and the closed arms are $q$-closed (similarly for $A_\sigma(m,n,p,q)$). Similarly, on the second space:
\begin{align*}
\text{on } \{0,1\}^{\mathcal{E}^2},~ A_\sigma(n) & = \{0 \to_\sigma \partial B(n)\} \\
A_\sigma(m,n)&= \{\partial B(m) \to_\sigma \partial B(n)\}.
\end{align*}

\subsection{Outline of the paper}

In deriving our main results, we exploit the natural coupling of independent (Bernoulli) percolation processes at different parameters and invasion percolation, and the ``near-critical'' nature of the invasion cluster on large scales. We summarize some of the relevant techniques and estimates from the literature in Section \ref{sec: tools}.

In Section \ref{sec: lwr-bd}, we derive the lower bound \eqref{eqn: thm1-lwr-bd} in Theorem \ref{thm: at_least_two}.  The proof proceeds by the construction of a sequence of arms according to the sequence $\tilde{\sigma}$ from the origin to a dual $p_n$-closed circuit around $0$ with one defect edge $e$ ($e$ is not required to be $p_n$-closed); $e$ is itself connected to one of the open arms, and to an infinite $p_n$-open cluster. (See \eqref{eq: p_n_def} for the definition of $p_n$ and Figure~\ref{arms_construction2} for an illustration.) This construction has a total probability cost of $\mathbb{P}_{cr}(A_{\tilde{\sigma}}(n))$.  The dual circuit and dual closed arms are chosen in such a way that any region between two closed arms and the circuit is not invaded. Additional non-invaded arms can be found in this region, provided there is sufficient space. To show that with high probability the closed dual arms cannot come too close to each other, we use the exact value of the five-arm exponent and Reimer's inequality (see Lemma \ref{lma: lma_1}) to bound the probability of a resulting six-arm event.

The proof of \eqref{eqn: thm1-upper-bd} appears in Section \ref{sec: upper-bd}. We decompose the probability of the arm event in invasion according to the position of the first \emph{outlet} $\hat{e}_1$, the invaded edge with maximal weight in the IPC. If $t_{\hat{e}_1} < \tau$, the entire invasion is made of $\tau$-open edges, and in particular, any invaded arms are $\tau$-open. As for non-invaded arms, Lemma \ref{lma: sigmareduction} shows that any region which contains one (resp. two) consecutive non-invaded arms from the origin must contain one (resp. two) $p_c$-closed arms. If $\hat{e}_1$ is near the origin, the boundary of the invasion at the step when $\hat{e}_1$ is added to the invasion graph contains long closed dual arms from $(\hat{e}_1)^*$ to $\partial B(n)$; see Lemma \ref{lma: sigmareduction}. This unlikely event generates a small probability factor allowing us to use an argument from \cite{jarai} to sum over a partition of the possible values of $\tau$
 without losing any logarithmic factors. If $\hat{e}_1$ is far from the origin, its weight is likely to be close to $p_c$. This is quantified using estimates for the correlation length from \cite{jarai}, and the same summation argument introduced in that paper.

The first inequality of Theorem \ref{thm: only_one_a}, \eqref{eqn: thm2-first}, is derived in Section \ref{sec: thm2-lwr-bd} by constructing outlets in annuli in a positive density of scales. Outlets are edges whose weight is greater than any edges invaded at a later stage. We connect these successive outlets by $p_c$-open paths, and this ensures that closed connections placed between dual circuits which have defects at the outlets remain non-invaded. By quasimultiplicativity, constructing the outlets and extending closed arms through all annuli has a probability cost of $\mathbb{P}_{cr}(A_{\sigma_k}(n))$ times a (very small) gluing factor depending on the number of outlets placed. The large number of choices of possible placements for the annuli then beats this gluing factor and yields the additional $n^\epsilon$.

The upper bound \eqref{eqn: thm2-two} is proved in Section \ref{sec: thm2-upper-bd}. Here, we show that the event $A_{\sigma_k}(n)$ implies $A_{\hat{\sigma}_k}(|\sigma_k|,n,p_c)$ when $k=1,2$, so, using \eqref{eq: changearms}, we have
\[
\mathbb{P}(A_{\sigma_k}(n)) \le C\mathbb{P}(A_{\sigma_k}(n)\mid A_{\hat{\sigma}_k}(|\sigma_k|,n,p_c))\mathbb{P}_{cr}(A_{\hat{\sigma}_k}(n)).
\]
The problem is then reduced to showing that the conditional probability is $O(n^{-\epsilon})$ for some $\epsilon>0$. By a characterization of the occurrence of an invaded circuit in an annulus in Lemma~\ref{lma: inv-circuit}, showing this bound reduces to finding with high probability, a $p$-open circuit around 0 enclosed by a $p$-closed dual circuit for some $p>p_c$. We estimate this conditional probability using a decoupling idea due to Kesten and Zhang \cite{kestenzhang}.

\section{Tools from near-critical percolation}
\label{sec: tools}
For any $p \in [0,1]$ and $n \geq 1$, define the box-crossing event $\sigma(n)$ that there is an open path in $B(n)$ connecting the left side $\{-n\} \times [-n,n]$ to the right side $\{n \} \times [-n,n]$. It is known that for $p > p_c$, $\mathbb{P}_p(\sigma(n)) \to 1$, and so, given $\epsilon>0$, we define the correlation length $L(p, \epsilon)$ of $p$ as
\[
L(p,\epsilon) = \min\{n \geq 1 : \mathbb{P}_p(\sigma(n)) > 1-\epsilon\}.
\]
Kesten \cite[Eq.~1.24]{kestenscaling} has shown that there is $\epsilon_0$ such that for any $\epsilon_1, \epsilon_2 \in (0,\epsilon_0]$, one has
\[
L(p,\epsilon_1) \asymp L(p,\epsilon_2) \text{ as } p \downarrow p_c,
\]
so we simply write $L(p)$ for $L(p,\epsilon_0)$.

It is useful to roughly invert the function $p \mapsto L(p)$, so for $n \geq 1$, we define
\begin{equation}\label{eq: p_n_def}
p_n = \min \{{p > p_c} : L(p) \leq n\}.
\end{equation}

Below, we list various properties of correlation length and ``classical results'' that we will use. Recalling the definition of $A_\sigma(n)$ in \eqref{eq: a_sigma_def}, we extend this definition to $A_\sigma(n,p,q)$ for $n \geq 1$ and $p,q \in [0,1]$ as the event that $\partial B(|\sigma|)$ is connected to $\partial B(n)$ by $|\sigma|_O$ $p$-open paths, $|\sigma|_C$ $q$-closed duals paths, and these paths occur in the orientation given by $\sigma$.
\begin{itemize}
\item For any $\sigma$, there is a constant $C>0$ such that, for $m \leq n\le \min\{L(p_1),L(p_2)\}$, we have the estimate:
\begin{equation}\label{eq: changearms}
(1/C) \mathbb{P}_{cr}(A_{\sigma}(m,n)) \leq \mathbb{P}(A_{\sigma}(m,n, p_1, p_2))\le  C\mathbb{P}_{cr}(A_{\sigma}(m,n)).
\end{equation}
This is proved in \cite[Theorem 27]{nolin} when $p_1=p_2$. The proof given there also applies to our case. See \cite[Lemma 6.2]{DSV} for a proof in case $\sigma=(O,C,O,C)$.
\item There is a constant $C >0$ such that for all $n$,
\begin{equation}\label{eq: p_n_to_infty}
\mathbb{P}(\partial B(n) \to \infty \text{ by a }p_n\text{-open path}) \geq C.
\end{equation}
See \cite[Theorem 2]{kestenscaling}.




\item (Similar to \cite[Eq.~(2.25)]{kestenscaling}.) There are constants $C_1,C_2>0$ such that for all $p > p_c$ and all $n \geq L(p)$,
\begin{equation}\label{eq: closed_exponential}
\mathbb{P}(\exists \text{ a } p\text{-closed dual path of diameter at least }n/10 \text{ in } B(n)) \leq C_1e^{-C_2\frac{n}{L(p)}}.
\end{equation}

\item By \cite[Eq.~(2.10)]{jarai}, there is $D\in(0,1)$ such that for all $n$,
\begin{equation}\label{eq: D_p_n}
Dn \leq L(p_n) \leq n.
\end{equation}

\item We will make use of a slight generalization of the  Harris-FKG association inequality, sometimes referred to as the ``generalized FKG inequality''. This will allow the usual FKG techniques to be extended to some cases where events are not monotone; we will describe an example below when outlining ``gluing'' arguments. 

The version we give is from \cite[Lemma 13]{nolin}. Suppose $A_1$ and $A_2$ are increasing events in the space of the $(t_e)$ variables. That is, suppose that for each configuration $(t_e) \in A_1$ and each $(t_e')$ satisfying $t_e' \geq t_e$ for all $e$, we have $(t_e') \in A_1$ (and similarly for $A_2$). Suppose that $B_1$ and $B_2$ are two decreasing events. Last, suppose that there are finite disjoint subsets $\Xi_+$, $\Xi_-,$ $\Xi \subseteq \mathcal{E}^2$ such that $A_1$ depends only on the edge variables in $\Xi_+ \cup \Xi$, $B_1$ on the variables in $\Xi_- \cup \Xi$, $A_2$ on the variables in $\Xi_+$, and $B_2$ on the variables in $\Xi_-$. Then
\begin{equation}
  \label{eq:genfkg}
  \mathbb{P}\left(A_2 \cap B_2 \mid A_1 \cap B_1 \right) \geq \mathbb{P}(A_2) \mathbb{P}(B_2)\ .
\end{equation}

\item The Russo-Seymour-Welsh (RSW) theorem \cite{russo, seymourwelsh} says that, at criticality, open and closed box crossings are likely on all scales. We must extend our definition of $\sigma(n)$, defining $\sigma(m,n)$ to be the event that there is an open path in the rectangle $[-m,m]\times [-n,n]$ connecting the left side of this rectangle to the right side. With this notation, RSW says that for any $c > 0$, there is a $\delta_c > 0$ such that 
\begin{equation}
\label{eq:rswstate}
\delta_c \leq \liminf_n \mathbb{P}_{cr}(\sigma(n,cn)) \leq \limsup_n \mathbb{P}_{cr}(\sigma(n, cn)) \leq 1 - \delta_c\ . \end{equation}
In fact, $\delta_c$ can be chosen so that the bounds of \eqref{eq:rswstate} hold uniformly in the percolation parameter $p$, up to the correlation length:
\begin{equation}
  \label{eq:rswstate2}
  \delta_c \leq \inf_{n: \, \max\{n,\, cn\} \leq L(p)} \mathbb{P}_{p}(\sigma(n,cn)) \leq \sup_{n: \, \max\{n,\, cn\} \leq L(p)} \mathbb{P}_{p}(\sigma(n, cn)) \leq 1 - \delta_c\ .
\end{equation}

A useful graph-theoretic fact is that there is an open left-right crossing of the box $[0,m] \times [0,n]$ if and only if there is no closed dual top-bottom crossing of the dual box $[1/2,m-1/2] \times [-1/2,n+1/2]$. This observation, combined with \eqref{eq:rswstate} and \eqref{eq:rswstate2} (along with the symmetries of the lattice) give analogous RSW bounds for closed dual crossings.

\item We will often require use of standard ``gluing'' and ``arm separation'' techniques to extend and connect arms in critical and near-critical percolation. These techniques originated in \cite{kestenscaling} and have the RSW estimates as their chief ingredient. Because we will often omit details in the particular cases where these tools are used, we give here a typical application of these techniques (for more detail, see \cite[Section 4]{nolin}).

Fix the particular color sequences $\sigma_1 = (C), \, \sigma_2 = (O,C), \, \sigma_3 = (O)$. We claim that 
\begin{equation}
\label{eq:glueasymp}
\mathbb{P}_{cr}(A_{\sigma_1}(5n), \, 0 \leftrightarrow 3n e_1) \asymp \mathbb{P}_{cr}(A_{\sigma_2}(n)) \mathbb{P}_{cr}(A_{\sigma_3}(n))\ .  
\end{equation}
It is clear by inclusion and independence that the left-hand side of \eqref{eq:glueasymp} is bounded above by the right-hand side, so we focus on the lower bound. Note that the right-hand side is the probability of the event that $0$ has an open and closed connection to distance $n$ and that $3n e_1$ has an open connection to distance $n$. The intuition behind the bound is the following: given these ``distance $n$'' connections, there is at least some uniform (in $n$) constant probability that the closed connection from $0$ is connected to a closed rectangle crossing connecting $B(n)$ to $\partial B(5n)$ (the existence of such a crossing is furnished by the RSW technology). Similarly, the open connections from $0$ and $3n e_1$ have bounded conditional probability of connecting to the same open rectangle crossing connecting $B(n)$ to $B(n) + 3n e_1$.

To make this precise, we mandate the connections from $0$ to $\partial B(n)$ and $3n e_1$ to $3 n e_1 + \partial B(n)$ have particular ``landing sites''. That is, we ask that we can choose the closed arm from $0$ to connect to $\partial B(n)$ specifically in the segment $\{-n\} \times [-\delta n, \delta n]$ for some $\delta$ small, and the open arm to connect to $\{n\} \times [-\delta n, \delta n]$. 

We also ask that the arms not come too close to each other's endpoints: namely, that the closed arm not enter $[n - \delta n, n] \times [- \delta n, \delta n]$, and that the open arm not enter $[-n,-n+\delta n]\times[-\delta n, \delta n]$. We also mandate the arm from $3 n e_1$ have its endpoint in $\{2n\} \times [-\delta n, \delta n]$. This can be done using small scale ``fences'' -- rectangle crossings near arm endpoints -- to insulate the arms from each other and then direct them. 

Last, we ask for each of the separated arms to be ``extensible''. Taking the open arm from $0$ as an example and letting $z$ denote its endpoint, this means that for some fixed small $\varepsilon > 0$ there is an open vertical crossing of $z + [0, \varepsilon n] \times[-\varepsilon n, \varepsilon n]$ which is connected to the open arm by part of an open circuit; similar constructions are used on the remaining arms.  Figure \ref{fig:extend} depicts these arm extensions near the boundary of the small squares. The resulting event has probability at least $c \mathbb{P}_{cr}(A_{\sigma_2}(n)) \mathbb{P}_{cr}(A_{\sigma_3}(n))$ for some small uniform $c$; see \cite[Theorem 11]{nolin}.

Condition on the above event. We can now attach crossings of rectangles of diameter order $n$ to the small extensions of the preceding paragraph to direct the arms appropriately. Given the RSW estimates and generalization of FKG given at \eqref{eq:genfkg} (here we require the fixed, disjoint landing sites), these rectangle crossings can be shown have at least constant conditional probability. For instance, letting $z'$ denote the endpoint of the open arm from $3n e_1$, we ask for there to be an open left-right crossing of the rectangle $z' + [n, 2n] \times [-\varepsilon n, \varepsilon n]$. The existence of this rectangle crossing guarantees occurrence of $\{0 \leftrightarrow 3n e_1\}$; a similar extension is used on the closed arm. These rectangle crossings are depicted connecting the two squares (for the open crossing) and connecting the left square outward toward $\partial B(5n)$ (for the closed crossing) in Figure \ref{fig:extend}.

When applying the generalized FKG inequality above, one should take as $A_1$ and $B_1$ the events guaranteeing the existence of well-separated open and closed arms (respectively) from $0$ and $3 n e_1$. The events $A_2$ and $B_2$ can be taken to be the existence of the open and closed ``extensions'' of these arms; because these extensions are chosen localized in disjoint regions, we obtain the appropriate disjointness for the edges on which these events depend. The resulting event has probability at least $c \mathbb{P}_{cr}(A_{\sigma_2}(n)) \mathbb{P}_{cr}(A_{\sigma_3}(n))$ for some possibly smaller $c$ and implies $A_{\sigma_1}(5n) \cap \{0 \leftrightarrow 3 n e_1\}$, completing the proof.


\begin{figure}
\centering
\includegraphics[scale = 0.5]{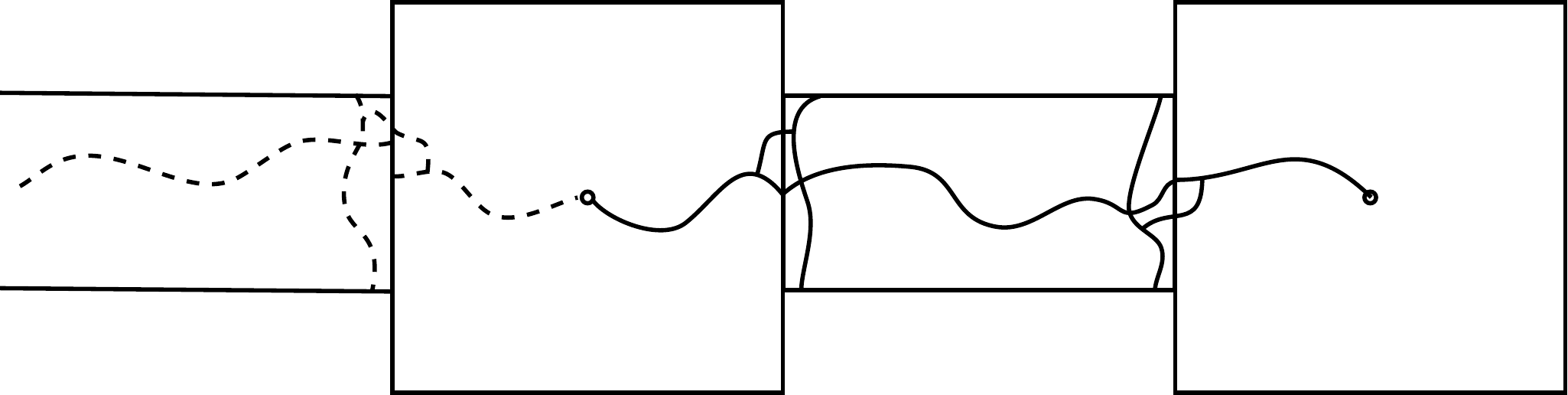}
\caption{Depiction of the gluing argument outlined above. The closed arm from $0$ is extended all the way to $\partial B(5n)$ (not shown). Note that the arms in $B(n)$ may wander throughout the box, though they may not come close to each other's endpoints.}
\label{fig:extend}
\end{figure}

\item One consequence of the above gluing techniques is the ``quasi-multiplicativity'' of arm events. This says that, up to constants, an arm event can be decomposed into two smaller-scale arm events. Fix some color sequence $\sigma$. Then uniformly in $p \geq p_c$ and $|\sigma| \leq k <  m < n \leq L(p)$, we have
\begin{equation}
\label{eq:quasmult}
\mathbb{P}_p(A_\sigma(k,n)) \asymp_\sigma \mathbb{P}_p(A_\sigma(k,m)) \mathbb{P}_p(A_\sigma(m,n))\ .    \end{equation}
Here ``$\asymp_\sigma$'' means that the ratio of the two sides is uniformly bounded by constants which depend possibly on $\sigma$ but not on $k\,,m,\,n,\,p$.

\end{itemize}

\section{Proof of Theorem~\ref{thm: at_least_two}}


Given a color sequence $\sigma$, recall the definition of the reduced sequence $\tilde{\sigma}$. It is the sequence obtained from $\sigma$ by replacing any subsequence of $l$ consecutive `$C$' entries by $\min(l,2)$ `$C$' entries. For example, if
\[\sigma = (O,C,C,C,O,C),\]
then
\[\tilde{\sigma} = (O,C,C,O,C),\]
whereas $\tau =(O,C,O,C)$ is equal to $\tilde{\tau}$.

\subsection{Proof of the lower bound}\label{sec: lwr-bd}

Here we prove the inequality
\begin{equation}\label{eq: one_bound}
\mathbb{P}_{cr}(A_{\tilde{\sigma}}(n))\lesssim \mathbb{P}(A_\sigma(n))
\end{equation}

\begin{proof}

We will first show that with positive probability, on the event $A_{\tilde \sigma}(n,p_c,p_n)$, we can force the existence of an outlet for the invasion in the annulus $Ann(n,2n)$. An outlet is an edge $e \in S$ such that if $e = e_s$ (the $s$-th invaded edge), then $t_e > \sup\{t_{e_r} : r > s\}$. The event $O_e$ in the next lemma does not directly imply that the edge $e$ is an outlet, but if it is combined with the conditions (a) one endpoint of $e$ is invaded and (b) the other endpoint of $e$ is connected to $\infty$ by a $p_n$-open path, then $e$ is an outlet. We mainly give the lemma and its proof to illustrate the general technique used to build four-arm edges that can be turned into outlets.

In the conditions below, items 1 and 2 imply that $e^*$ is part of a $p_n$-closed dual circuit $\mathcal{C}_e$ around 0 with one defect (an edge which is not $p_n$-closed), the edge $e$. See Figure \ref{arms1} for an illustration.

\begin{figure}
\centering
\includegraphics[scale = 0.35]{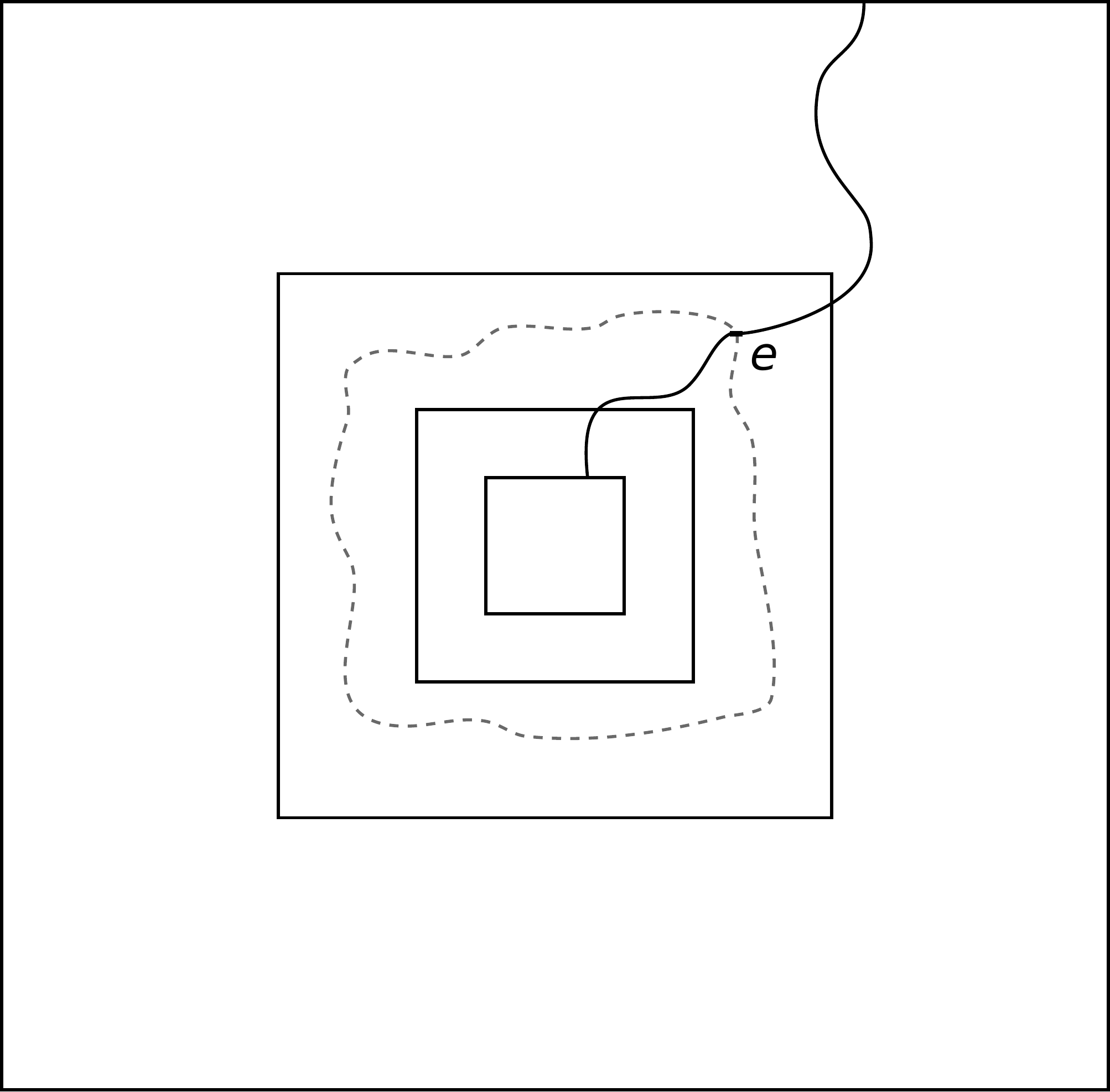}
\caption{The outlet construction in Lemma \ref{lma: outlet_construction}. The edge $e$ crosses the defect in an otherwise $p_n$-closed dual circuit $\mathcal{C}_e$. The open paths from $e$ are $p_c$-open; if these connections have $p_n$-open extensions to $0$ and $\infty$, then $e$ will be an outlet. \label{fig:outcons}}
\label{arms1}
\end{figure}

\begin{lma}\label{lma: outlet_construction}
For $e \in Ann(n,2n)$, let $O_e$ be the event that the following conditions hold:
\begin{enumerate}
\item $t_e \in (p_c,p_n)$,
\item one endpoint of the dual edge $e^*$ is connected by a $p_n$-closed dual path in $Ann(n,2n)$ around the origin to the other endpoint of $e^*$,
\item one endpoint of $e$ is connected to $\partial B(4n)$ by a $p_c$-open path, and
\item the other endpoint of $e$ is connected to $\partial B(n/2)$ by a $p_c$-open path.
\end{enumerate}
There exists $C>0$ such that for all $n$,
\[
\mathbb{P}\left( \bigcup_{e \in Ann(n,2n)} O_e \right)  \geq C.
\]
\end{lma}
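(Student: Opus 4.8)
\emph{Overview.} The plan is to realize $\bigcup_{e\in Ann(n,2n)}O_e$ as $\{N\ge 1\}$, where $N$ counts the edges $e\in Ann(n,2n)$ for which $O_e$ holds, and to bound $\mathbb{P}(N\ge 1)$ below by the Paley--Zygmund inequality $\mathbb{P}(N\ge 1)\ge(\E N)^2/\E[N^2]$. It then suffices to show $\E N\asymp 1$ and $\E[N^2]\lesssim 1$. Write $\pi_4(m):=\mathbb{P}_{cr}(A_{(O,C,O,C)}(m))$ for the (polychromatic) four-arm probability, and $\pi_4(r,m)$ for the analogous probability across $Ann(r,m)$, so $\pi_4(r,m)\asymp\pi_4(m)/\pi_4(r)$ by quasi-multiplicativity. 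Since $n\asymp L(p_n)$ by \eqref{eq: D_p_n}, every percolation construction below takes place within a bounded multiple of the correlation length of $p_n$, where RSW for $p_n$-open and $p_n$-closed dual crossings, \eqref{eq: changearms}, and quasi-multiplicativity all apply.

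\emph{First moment.} On $\{t_e\in(p_c,p_n)\}$ the edge $e$ is neither $p_c$-open nor $p_n$-closed, so conditions 2--4 of $O_e$ can be rephrased without reference to $e$; thus $O_e=\{t_e\in(p_c,p_n)\}\cap E_e$ for an event $E_e$ measurable with respect to $(t_g)_{g\ne e}$, and $\mathbb{P}(O_e)=(p_n-p_c)\,\mathbb{P}(E_e)$. For $e$ in the bulk, say $e\in Ann(5n/4,7n/4)$, $E_e$ is a four-arm event around $e$ at scale $\asymp n$ --- two $p_c$-open arms reaching $\partial B(n/2)$ and $\partial B(4n)$, in cyclic alternation with two $p_n$-closed dual arms --- carrying the additional requirement that the two closed dual arms close up into a circuit around $0$ inside $Ann(n,2n)$. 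By \eqref{eq: changearms}, quasi-multiplicativity, and the standard arm-separation and gluing constructions of Section~\ref{sec: tools} (which make the closing-up requirement cost only a constant factor), $\mathbb{P}(E_e)\asymp\pi_4(n)$. Summing over the $\asymp n^2$ bulk edges, and bounding $\mathbb{P}(O_e)\lesssim(p_n-p_c)\pi_4(n)$ for the other edges of $Ann(n,2n)$, yields $\E N\asymp n^2(p_n-p_c)\pi_4(n)$. By Kesten's scaling relation for the correlation length \cite{kestenscaling}, $(p_n-p_c)L(p_n)^2\pi_4(L(p_n))\asymp 1$, and since $L(p_n)\asymp n$ this reads $n^2(p_n-p_c)\pi_4(n)\asymp 1$; hence $\E N\asymp 1$.

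\emph{Second moment.} Write $\E[N^2]=\E N+\sum_{e\ne f}\mathbb{P}(O_e\cap O_f)$, and fix $e\ne f$ with $|e-f|\asymp r$. Conditioning on $(t_e,t_f)$ and using the rephrasing above, $\mathbb{P}(O_e\cap O_f)\le(p_n-p_c)^2\sup_{s,t\in(p_c,p_n)}\mathbb{P}(E_e\cap E_f\mid t_e=s,\,t_f=t)$; since fixing two edge-weights changes arm probabilities only by constants, the supremum is at most a constant times the critical probability of four-arm patterns of the prescribed colors around both $e$ and $f$, which is $\asymp\pi_4(r)^2\pi_4(r,n)$ (separate patterns in the disjoint boxes $B(e,r/2)$, $B(f,r/2)$, then one pattern from the pair out to scale $n$). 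Hence $\mathbb{P}(O_e\cap O_f)\lesssim(p_n-p_c)^2\pi_4(r)^2\pi_4(r,n)\asymp(p_n-p_c)^2\pi_4(r)\pi_4(n)$. Summing over the $\asymp n^2r$ ordered pairs at distance $\asymp r$ and then over $r\le 4n$, and using that $\sum_{r\le 4n}r\,\pi_4(r)\asymp n^2\pi_4(n)$ (a consequence of the standard a priori estimate that the four-arm exponent is strictly below $2$; see \cite{kestenscaling}), gives $\sum_{e\ne f}\mathbb{P}(O_e\cap O_f)\lesssim\big(n^2(p_n-p_c)\pi_4(n)\big)^2\asymp 1$. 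With $\E N\asymp 1$ this gives $\E[N^2]\lesssim 1$, and Paley--Zygmund yields $\mathbb{P}\big(\bigcup_e O_e\big)=\mathbb{P}(N\ge 1)\ge C$.

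\emph{The main obstacle.} The crux is the gluing step in the first-moment bound: upgrading ``a four-arm edge to distance $\asymp n$ with mixed parameters $p_c,p_n$'' to the exact configuration in $O_e$ --- two $p_n$-closed dual arms joined into a circuit around $0$ inside the \emph{thin} annulus $Ann(n,2n)$, the non-closed window exactly at $e$, two $p_c$-open arms landing on $\partial B(n/2)$ and $\partial B(4n)$ --- while keeping every sub-construction within one correlation length of $p_n$ so that RSW and quasi-multiplicativity remain available. This is routine in spirit (fix landing sequences for the arms on $\partial B(n/2)$, $\partial B(4n)$ and on an auxiliary circle in $Ann(n,2n)$; build the closed circuit-with-one-defect by RSW at parameter $p_n$; paste the pieces at constant cost via the generalized FKG inequality \eqref{eq:genfkg}, just as in the sample gluing argument of Section~\ref{sec: tools}), but the bookkeeping is delicate. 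The other inputs --- quasi-multiplicativity, the two-point four-arm estimate, and Kesten's relation $n^2(p_n-p_c)\pi_4(n)\asymp 1$ --- are standard near-critical facts.
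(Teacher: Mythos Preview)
Your argument is correct, but it is more laborious than the paper's, because you overlook a key structural fact: the events $(O_e)_{e\in Ann(n,2n)}$ are pairwise \emph{disjoint}. Indeed, if $O_e$ and $O_f$ both held with $e\ne f$, the two $p_c$-open arms from $e$ together with $e$ itself form a primal path from $\partial B(n/2)$ to $\partial B(4n)$ that is $p_c$-open except at $e$; this path must cross the $p_n$-closed dual circuit with defect $f$ furnished by $O_f$, and every edge dual to that circuit is $p_c$-closed (either $p_n$-closed, or equal to $f$ with $t_f>p_c$), forcing the crossing to occur at $e$ --- but then $e^*$ lies on the circuit, so either $e=f$ or $t_e\ge p_n$, both contradictions. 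Given disjointness, $\mathbb P(\cup_e O_e)=\sum_e\mathbb P(O_e)=\E N$, and your first-moment lower bound $\E N\gtrsim n^2(p_n-p_c)\pi_4(n)\asymp 1$ already finishes the proof; the entire second-moment computation is unnecessary (indeed $N\in\{0,1\}$, so $\E[N^2]=\E N$ trivially). The paper proceeds exactly this way. Your Paley--Zygmund route has the virtue of being more robust --- it would survive in settings where the outlet events overlap --- but here it obscures a one-line observation.
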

\begin{proof}
By \eqref{eq: changearms},
\[
\mathbb{P}_{cr}(A_\tau(4n)) \leq C \mathbb{P}(A_\tau(4n,p_c,p_n)),
\]
where $\tau = (OCOC)$. By directing these arms as in \cite{kestenscaling} with the generalized FKG inequality and the RSW theorem, one obtains $C$ such that for all $n$ and $e \in Ann(n,2n)$,
\[
\mathbb{P}(O_e) \geq C \mathbb{P}_{cr}(A_\tau(4n)) (p_n-p_c).
\]
Since the events $(O_e)$ are disjoint,
\begin{align*}
\mathbb{P}(\cup_e O_e) = \sum_e \mathbb{P}(O_e) &\geq C n^2 \mathbb{P}_{cr}(A_\tau(4n)) (p_n-p_c) \\
&\geq C n^2 \mathbb{P}_{cr}(A_\tau(n)) (p_n-p_c).
\end{align*}
In the last inequality we have used quasimultiplicativity. Last, we use the scaling relation \cite[Prop.~34]{nolin}
\[
n^2 \mathbb{P}_{cr}(A_\tau(n)) (p_n-p_c) \asymp 1.
\]
\end{proof}

We will naturally want to force an edge $e$ for which $O_e$ occurs to be an outlet. We also want to do this in such a way that there are $\tilde \sigma$ arms from $\partial B(|\tilde \sigma|)$ to $\partial B(n)$. To do so, it will be easier to begin the arms from some fixed box $B(K)$ instead of near 0. So for $n \geq 8K \geq 8|\tilde {\sigma}|$, we consider the event $A_{\tilde \sigma}^*(K,n,p_c,p_n)$ comprised of the following conditions (see Figure \ref{arms_construction2}):

\begin{figure}
\centering
\includegraphics[scale = 0.35]{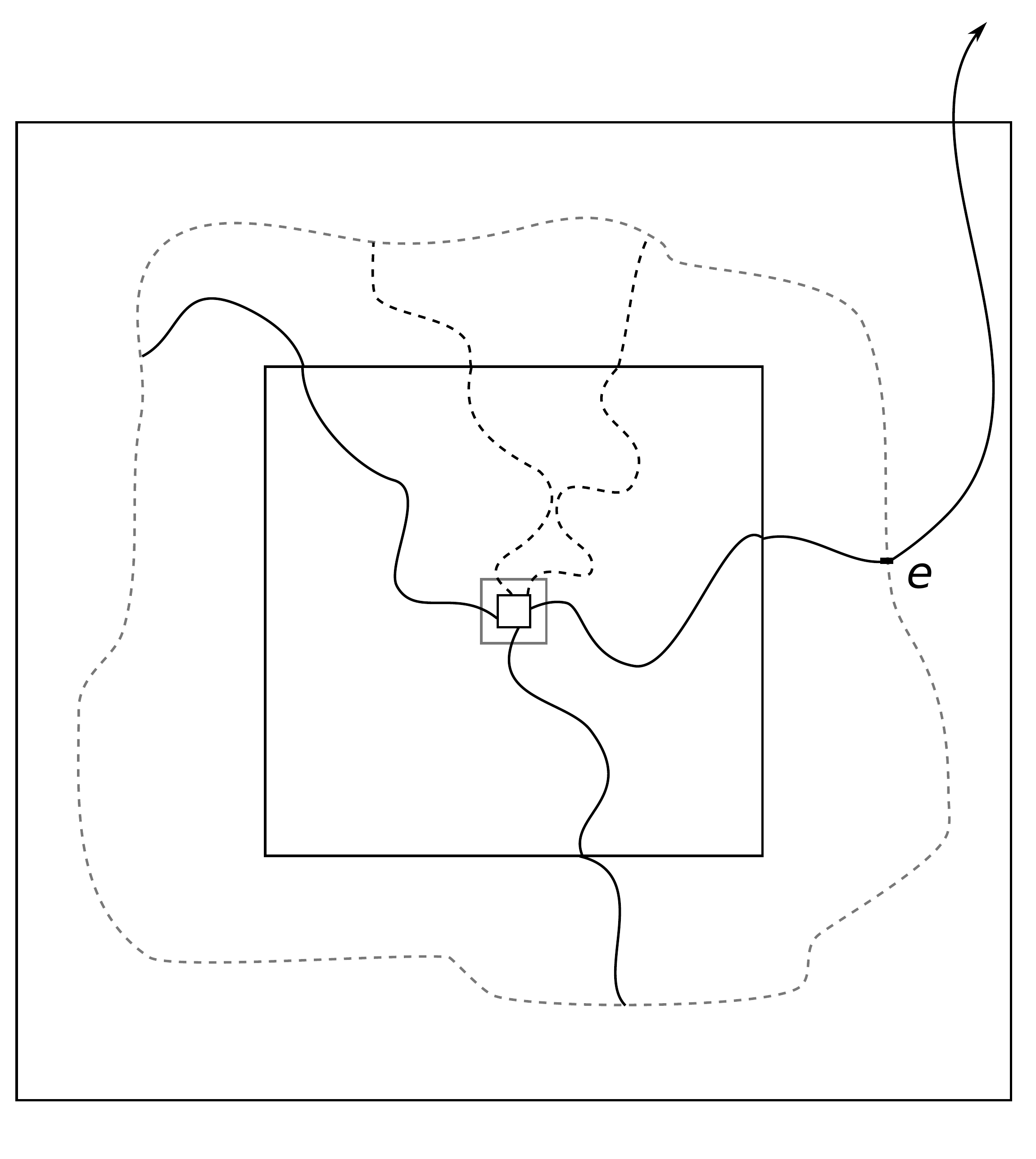}
\caption{The event $A_{\tilde \sigma}^*(K, n, p_c, p_n)$. The construction is as in Figure \ref{fig:outcons}, with additional connections from $B(K)$ (the innermost box) to the closed defected circuit crossed by $e$, and a $p_n$-open extension of the path from $e$ which connects $e$ to $\infty$ (in the annulus $Ann(n,2n)$). Note that $e$ is not yet guaranteed to be an outlet; this will be enforced by imposing conditions on the weights in $B(K)$.}
\label{arms_construction2}
\end{figure}

\begin{enumerate}
\item there is an edge $e$ in $Ann(n,2n)$ for which $O_e$ occurs,
\item there is a $p_n$-open path connecting $e$ to $\infty$,
\item there is a $\tilde{\sigma}$ connection from $B(K)$ to $\partial B(n)$; the open paths are $p_c$-open, the closed dual paths are $p_n$-closed, one of the open paths further connects to $e$, the other open paths further connect to edges dual to those on the $p_n$-closed circuit $\mathcal{C}_e$ with one defect from the definition of $O_e$, and the closed dual paths further connect to the same circuit.
\end{enumerate}

Note that the $p_n$-open path from item 2 is not necessarily disjoint from the $p_c$-open path from item 3 of the definition of $O_e$, and the $p_c$-open path from the $\tilde \sigma$-connection in item 3 connecting to $e$ is not necessarily disjoint from the $p_c$-open path from item 4 of the definition of $O_e$.

We next claim that
\begin{equation}\label{eq: tostada}
\mathbb{P}(A_{\tilde \sigma}(K,n,p_c,p_n)) \leq C\mathbb{P}(A_{\tilde \sigma}^*(K,n,p_c,p_n)).
\end{equation}
To justify \eqref{eq: tostada}, first let $F_n$ be the event that there is an edge $e \in \tilde B(n): = B(n/4) + (3n/2)e_1$ such that the following conditions hold:
\begin{enumerate}
\item the weight $t_e \in (p_c,p_n)$,
\item one endpoint of the dual edge $e^*$ is connected to the top of the box $[n,2n]\times [-2n,2n]$ by a $p_n$-closed dual path (inside the box), and the other is connected to the bottom by another disjoint such path,
\item one endpoint of $e$ is connected to $\partial B(n/2)$ by a $p_c$-open path remaining in the set $[0,2n] \times [-n,n]$, and
\item the other endpoint of $e$ is connected to $\partial B(4n)$ by $p_c$-open path remaining in $[0,4n]\times [-n,n]$.
\end{enumerate}
By a similar argument to that given for Lemma~\ref{lma: outlet_construction}, but now explicitly directing the $p_c$-open arms as in \cite{kestenscaling}, one obtains
\[
\mathbb{P}(F_n) \geq C > 0.
\]

Next, we define the event $F_n'$ that there is a $p_n$-closed dual path $P$ connecting the top side of $[-2n,-n] \times [-2n,2n]$ to the bottom (within the box), and $P$ is connected in $[-2n,0] \times [-n,n]$ by $|\tilde \sigma|-1$ arms to $\partial B(n/2)$ so that these arms, along with the $p_c$-open one from the event $F_n$, have relative orientation $\tilde \sigma$. We claim that also
\begin{equation}\label{eq: clamland}
\mathbb{P}(F_n') \geq C > 0.
\end{equation}
To see why, we condition on the left-most $p_n$-closed dual path connecting the top of $[-2n,-n] \times [-2n,2n]$ to the bottom. For any such path $P$, let $D(P)$ be the event that it is leftmost. Then by the RSW theorem,
\[
0 \leq C \leq \sum_P \mathbb{P}(D(P)).
\]
Now let $F_n'(P)$ be the event that $P$ is connected by $|\tilde \sigma|-1$ arms to $\partial B(n/2)$ within $[-2n,0] \times [-n,n]$ so that these arms, along with the $p_c$-open one from the event $F_n$, have relative orientation $\tilde \sigma$. By the RSW theorem, $\mathbb{P}(F_n'(P)) > 0$ uniformly in $P$, and this event is independent of $D(P)$, so we obtain
\[
\mathbb{P}(F_n') \geq \sum_P \mathbb{P}(F_n'(P)) \mathbb{P}(D(P)) \geq C,
\]
proving \eqref{eq: clamland}. Note that by independence, we also have
\[
\mathbb{P}(F_n \cap F_n') \geq C.
\]

Next if $F_n''$ is the event that there are $p_n$-closed dual paths within $[-2n,2n]\times[n,2n]$ and $[-2n,2n]\times [-2n,-n]$ connecting the left and right sides of those boxes, then the RSW theorem gives $\mathbb{P}(F_n'') \geq C$ and by the generalized FKG inequality,
\[
\mathbb{P}(F_n \cap F_n' \cap F_n'') \geq C.
\]
Once again, we apply the generalized FKG inequality to obtain $\mathbb{P}(G_n) \geq C$, where $G_n$ is the event that $F_n \cap F_n' \cap F_n''$ occurs, and the $p_c$-open path from item 4 of the definition of $F_n$ is connected to infinity by a $p_n$-open path. (Here we are using \eqref{eq: p_n_to_infty}.) Therefore
\[
\mathbb{P}(A_{\tilde \sigma}(K,n,p_c,p_n)) \leq C \mathbb{P}(A_{\tilde \sigma}(K,n/2,p_c,p_n)) \mathbb{P}(G_n).
\]
By a gluing argument with the generalized FKG inequality,
\[
\mathbb{P}(A_{\tilde \sigma}(K,n/2,p_c,p_n)) \mathbb{P}(G_n) \leq \mathbb{P}(A_{\tilde \sigma}^*(K,n,p_c,p_n)),
\]
and these two inequalities justify \eqref{eq: tostada}.

We have now placed an outlet-like edge $e$ in the annulus $Ann(n,2n)$ (although it still needs to be guaranteed to be invaded -- this will occur later by forcing a $p_c$-open path from 0 to it), so we set on our next task, which is to make sure that the adjacent closed arms described in the sequence $\tilde \sigma$ can be chosen not to come too close to each other. Whenever $n \geq 8K \geq 8k|\tilde {\sigma}|$, where 
\[
k = |\sigma|,
\] 
we define $A_{\tilde {\sigma}}^{*,k}(K,n,p_c,p_n)$ that $A_{\tilde {\sigma}}^*(K,n,p_c,p_n)$ occurs, but the closed arms for the $\tilde \sigma$-connection can be chosen so that each pair associated to adjacent `$C$' entries is \emph{$k$-separated} in $Ann(2K,n/4)$: their intersections with $Ann(2K,n/4)$ remain Euclidean distance at least $k$ {from} each other. (Note that each pair of length-two sequences of `$C$' entries in $\tilde{\sigma}$ are separated by `$O$' entries.) In this modified event, we still insist that one of the $p_c$-open paths connects $B(K)$ to $e$ and the other $p_c$-open paths connect $B(K)$ to the dual circuit $\mathcal{C}_e$ from the definition of $O_e$.



\begin{claim}\label{clam: clam_head}
For any $K$ large enough, one has for $n \geq 8K$,
\begin{equation}\label{eqn: AlessAstar}
\mathbb{P}(A_{\tilde \sigma}^*(K,n,p_c,p_n))\le 2 \mathbb{P}(A_{\tilde \sigma}^{*,k}(K,n,p_c,p_n)).
\end{equation}
\end{claim}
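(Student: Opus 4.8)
The plan is to show that the event $A_{\tilde\sigma}^*(K,n,p_c,p_n)$, after conditioning, already forces the closed arms to be $k$-separated with probability at least $1/2$, for $K$ large. First I would observe that the only way the inequality can fail is if, no matter how one chooses the $\tilde\sigma$-connection guaranteed by $A_{\tilde\sigma}^*$, some pair of closed dual arms associated to adjacent `$C$' entries comes within Euclidean distance $k$ of each other somewhere in $Ann(2K, n/4)$. If two $p_n$-closed dual arms (equivalently two $p_c$-closed dual arms, up to the estimate \eqref{eq: changearms}, since we are below the correlation length) pass within distance $k$ of a common point $x$ at some scale, then in a box of radius $O(k)$ around $x$ one sees (at least) \emph{six} disjoint arms to distance comparable to the ambient scale: the two separated closed arms each contribute two arms (one going inward toward $B(K)$, one going outward toward $\partial B(n)$), plus, by planarity, the open path(s) of $\tilde\sigma$ lying between them must squeeze through, contributing at least two more open arms in the same small box — hence a six-arm event in an annulus $Ann(O(k), O(\text{scale}))$, with both colors present.

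The second step is the union bound and the use of the five-arm exponent via Reimer. For each dyadic scale $2^j$ with $2K \le 2^j \le n/4$, and each of the $O((2^j/k)^2)$ possible ``defect'' boxes of radius $\sim k$ at that scale, the probability that a six-arm event occurs from that box out to distance $\sim 2^j$ is, by Reimer's inequality (Lemma~\ref{lma: lma_1}) applied to bound a six-arm probability by a five-arm probability times a one-arm probability, at most $C (k/2^j)^{2+\delta}$ for some $\delta > 0$ — here one uses that the polychromatic five-arm exponent is exactly $2$ and the one-arm probability decays polynomially, so six arms cost strictly more than $(k/2^j)^2$. Summing over the $O((2^j/k)^2)$ boxes at scale $2^j$ gives a bound $C k^{-\delta} 2^{-j\delta}$ (roughly) at scale $j$; summing the geometric series over all dyadic scales $\ge 2K$ gives something like $C' (2K)^{-\delta} k^{-\delta}$, or more carefully $C'' k^{-\delta}$ after absorbing constants — in any case a quantity that tends to $0$ as $K \to \infty$. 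One must be slightly careful that these six-arm events should be compared against the ``background'' event $A_{\tilde\sigma}^*(K,n,p_c,p_n)$, so strictly the bound needed is on the conditional probability; this is handled by a standard gluing/quasimultiplicativity argument showing the six-arm defect, being localized, is ``essentially independent'' of the rest of the $\tilde\sigma$-configuration, so that $\mathbb{P}(\text{defect} \mid A_{\tilde\sigma}^*) \lesssim \mathbb{P}(\text{defect near }x\text{ at scale }2^j)/\mathbb{P}(A_{\tilde\sigma}^* \text{ locally})$, and the extra arms genuinely cost an extra power.

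Choosing $K$ large enough that this total conditional probability is at most $1/2$, we conclude that with probability at least $\frac12 \mathbb{P}(A_{\tilde\sigma}^*(K,n,p_c,p_n))$ no such near-collision occurs, which is exactly the statement that the closed arms can be chosen $k$-separated in $Ann(2K,n/4)$, i.e. $A_{\tilde\sigma}^{*,k}(K,n,p_c,p_n)$ occurs; this yields \eqref{eqn: AlessAstar}. The main obstacle I expect is the bookkeeping in the comparison step: making rigorous the claim that forcing two arms of the conditioned configuration to come close really does produce a \emph{genuine six-arm event with both colors} in a small box (one has to correctly count the inward and outward pieces of each closed arm and verify planarity forces the intervening open arm through the same small box), and then converting the near-critical ($p_n$) estimates to critical ($p_c$) ones via \eqref{eq: changearms} uniformly over all the dyadic scales below the correlation length. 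The exponent input — that the five-arm exponent equals $2$, so six arms beat the $(k/2^j)^2$ volume factor — is what makes the sum over $O((2^j/k)^2)$ boxes and over all scales converge, and this is the quantitative heart of the argument.
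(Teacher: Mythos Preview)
Your overall architecture matches the paper's: show that failure of $k$-separation forces a local six-arm event, sum over locations, and use the five-arm exponent via Reimer to make the sum $O(K^{-\delta})$, hence $\leq 1/2$ for $K$ large. The decoupling of the local six-arm event from the ambient $A_{\tilde\sigma}^*$ configuration via gluing/quasimultiplicativity is also what the paper does.

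There is, however, a real gap in how you produce the two \emph{open} arms at the near-collision point. You write that ``by planarity, the open path(s) of $\tilde\sigma$ lying between them must squeeze through.'' But the two closed dual arms that are allowed to come close are precisely the pair corresponding to \emph{adjacent} `$C$' entries in $\tilde\sigma$---there is no `$O$' entry between them, so no open arm of the $\tilde\sigma$-configuration sits between them. The open arms $P_i^1,P_i^2$ that bound the region $R_i$ containing the closed pair lie on the \emph{outside} of that pair and need not pass anywhere near the collision point. Planarity alone therefore yields only four (closed) arms, and four monochromatic closed arms are not known to beat the $|f|^{-2}$ volume factor.

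The missing idea is an \emph{extremality} choice of the closed arms. Within the region $R_i$ bounded by $P_i^1,P_i^2$, one replaces the given closed pair by the counterclockwise-most and clockwise-most $p_n$-closed dual arms $\hat\gamma_i^1,\hat\gamma_i^2$ (so the region between them is maximal). Extremality then guarantees that from any dual edge on $\hat\gamma_i^1$ there is a $p_n$-open path in $R_i$ to $P_i^1$, and similarly from $\hat\gamma_i^2$ to $P_i^2$; otherwise a more extreme closed arm would exist. At the near-collision edge $f$ this manufactures the two additional $p_n$-open arms to distance $|f|/2$, giving the full $(OCCOCC)$ six-arm event. With this fix your Reimer/five-arm bound $\mathbb{P}(E_f)\leq C|f|^{-2-\delta}$ goes through, and the rest of your outline (summing $\sum_{f\in Ann(2K,n/4)} |f|^{-2-\delta}\leq CK^{-\delta}$, then choosing $K$ large) is correct and essentially identical to the paper.
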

\begin{proof}



To show this claim, the main step is to show (in the following lemma) that when the event on the left occurs, but the one on the right does not, we can find an edge associated to a certain ``six-arm'' event. See Figure \ref{arms_construction}.

\begin{figure}
\centering
\includegraphics[scale = 0.30]{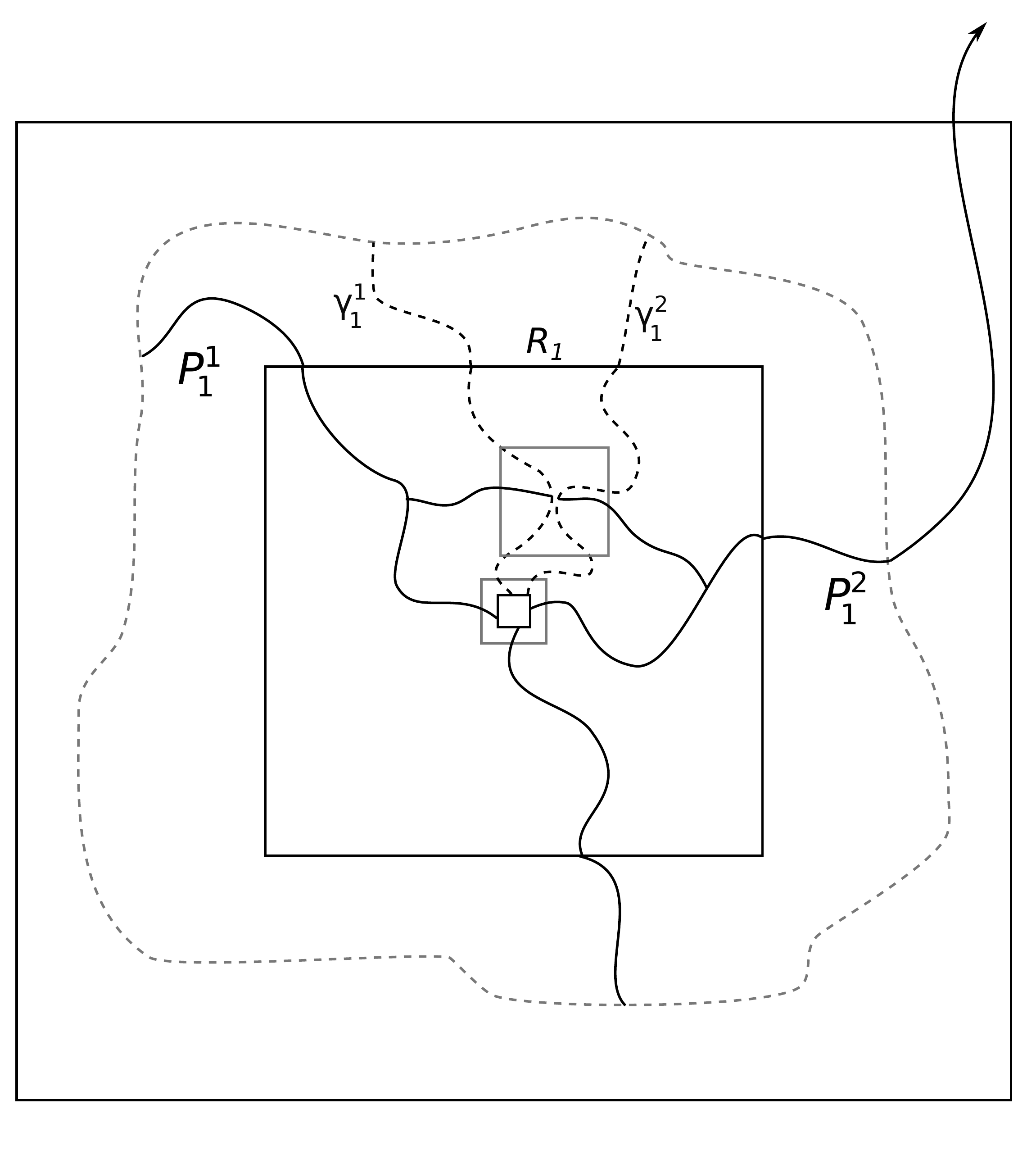}
\caption{The arms construction from Lemma \ref{lma: lma_1}. We show that the closed arms in $A_{\tilde \sigma}^*(K, n, p_c, p_n)$ may be chosen separated by arguing that otherwise there would be a six-arm point. In the figure, this six-arm point is where the paths $\gamma_1^1$, $\gamma_1^2$, and two open paths nearly meet; the arms extend from this meeting point to the boundary of the small box.}
\label{arms_construction}
\end{figure}

\begin{lma}\label{lma: lma_1}
If $n \geq 8K \geq 8 k|\tilde{\sigma}|$ and $A_{\tilde \sigma}^*(K,n,p_c,p_n)$ occurs but $A_{\tilde \sigma}^{*,k}(K,n,p_c,p_n)$ does not, then there is an edge $f\in Ann(2K,n/4)$ with the following six-arm conditions to distance $|f|/2$:
\begin{enumerate}
\item four disjoint $p_n$-closed dual arms from $B(f,2k)$ (the rectangular box of sidelength $2k$ centered on the midpoint of $f$) to $\partial B(f, |f|/2)$, and
\item two additional disjoint $p_n$-open arms from $B(f,2k)$ to $\partial B(f, |f|/2)$.
\end{enumerate}
The arms follow the color sequence $(OCCOCC)$.
\end{lma}

\begin{proof}[Proof of Lemma~\ref{lma: lma_1}]
Suppose that $A_{\tilde \sigma}(K,n, p_c,p_n)$ occurs, but $A_{\tilde \sigma}^k(K,n,p_c,p_n)$ does not. Fixing any choice of arms satisfying the event $A_{\tilde \sigma}(K,n,p_c,p_n)$, we will now make possibly new choices for the closed arms in each sequence of length 2 of `$C$' entries. If there are $r$ subsequences of length two of `$C$' entries, we first fix $p_n$-closed dual arms $\gamma_1^1, \gamma_2^1, \ldots, \gamma_1^r, \gamma_2^r$ (in clockwise order) corresponding to them in $int(\mathcal{C}_e)\setminus B(K)$ from $\partial B(K)$ to the $p_n$-closed dual circuit $\mathcal{C}_e$ with one defect. For each $i$, the arms $\gamma_i^1$ and $\gamma_i^2$ are bounded on either sides by (possibly non-distinct) $p_c$-open arms $P_i^1, P_i^2$, which delimit a region $R_i$ between $\partial B(K)$ and $\mathcal{C}_e$. We now choose $\hat \gamma_i^1$ and $\hat \gamma_i^2$ to be $p_n$-closed dual arms in $R_i$ which are counterclockwise-most and clockwise-most (so that the region between them is maximal). 

The arms $\hat \gamma_i^1, \hat \gamma_i^2$ for $i=1, \ldots r$, along with the $P_i^1,P_i^2$ give a total of $|\tilde \sigma|$ arms from $\partial B(K)$ to $\mathcal{C}_e$ which are oriented according to $\tilde \sigma$, so since $A_{\tilde \sigma}^{*,k}(K,n,p_c,p_n)$ does not occur, there must be an $i$ such that $\hat \gamma_i^1$ and $\hat \gamma_i^2$ come within distance $k$ of each other in $Ann(2K,n/4)$. Therefore we can find $f \in Ann(2K,n/4)$ such that the arms $\hat \gamma_i^1,\hat \gamma_i^2$ intersect $B(f,k)$. These arms furnish the four disjoint $p_n$-closed dual arms in part 1. By extremality, for each dual edge $g^*$ on $\hat \gamma_i^1\cup\hat \gamma_i^2$, there is a $p_n$-open path from $g$ to one of $P_i^1$ or $P_i^2$. Since both of $\hat \gamma_i^1$ and $\hat \gamma_i^2$ intersect $B(f,k)$, we find $p_n$-open arms which can be continued to $\partial B(f,|f|/2)$ when they meet either $P_i^1$ or $P_i^2$.
\end{proof}

We continue with $n \geq 8K \geq 8 k|\tilde{\sigma}|$ as in Lemma~\ref{lma: lma_1}. For $f \in Ann(2K,n/4)$, write $E_f$ for the event in the Lemma~\ref{lma: lma_1}. Then
\begin{equation}\label{eq: taco_also}
\mathbb{P}(A_{\tilde \sigma}^*(K,n,p_c,p_n) \setminus A_{\tilde \sigma}^{*,k}(K,n,p_c,p_n)) \leq \sum_{f \in Ann(2K,n/4)} \mathbb{P}(A_{\tilde \sigma}^*(K,n,p_c,p_n),~E_f).
\end{equation}
Using independence and a gluing argument (similar to \cite[p.~1599]{nolin}), the above summand is at most
\begin{equation}\label{eq: taco_too}
\mathbb{P}(A_{\tilde \sigma}(K,|f|/2,p_c,p_n))\mathbb{P}(E_f) \mathbb{P}(A_{\tilde \sigma}^*(2|f|,n,p_c,p_n)) \leq C \mathbb{P}(A_{\tilde \sigma}^*(K,n,p_c,p_n)) \mathbb{P}(E_f).
\end{equation}
By \eqref{eq: changearms},
\[
\mathbb{P}(E_f) \leq C \mathbb{P}_{cr}(A_\rho(|f|/2)).
\]
Here, $\rho$ is the sequence $(OCCOCC)$, and $A_\rho(|f|/2)$ is the $\rho$-connection event between $\partial B(6)$ and $\partial B(|f|/2)$. (The constant $C$ depends on $k$, but since $k$ is fixed for us, this does not matter.) By Reimer's inequality \cite{reimer} and the RSW theorem, there is $\delta>0$ such that $\mathbb{P}_{cr}(A_\rho(|f|/2)) \leq C|f|^{-\delta}\mathbb{P}_{cr}(A_{\rho'}(|f|/2))$, where $\rho' = (OOCOC)$. The universal value of the 5-arm exponent is 2 \cite[Lemma 5]{KSZ}, \cite[Theorem 24, 3]{nolin}, so we obtain
\[
\mathbb{P}(E_f) \leq C |f|^{-2-\delta}.
\]

Combine this inequality on $\mathbb{P}(E_f)$ with \eqref{eq: taco_also} and \eqref{eq: taco_too} for
\begin{align*}
\mathbb{P}(A_{\tilde \sigma}^*(K,n,p_c,p_n) \setminus A_{\tilde \sigma}^{*,k}(K,n,p_c,p_n)) &\leq C \mathbb{P}(A_{\tilde \sigma}^*(K,n,p_c,p_n)) \sum_{f \in Ann(2K,n/4)} |f|^{-2-\delta} \\
&\leq CK^{-\delta} \mathbb{P}(A_{\tilde \sigma}^*(K,n,p_c,p_n)).
\end{align*}
Choosing any $K \geq k|\tilde{\sigma}|$ so that $CK^{-\delta} \leq 1/2$ gives
\begin{align*}
\mathbb{P}(A_{\tilde \sigma}^{*,k}(K,n,p_c,p_n)) &= \mathbb{P}(A_{\tilde \sigma}^*(K,n,p_c,p_n)) - \mathbb{P}(A_{\tilde \sigma}^*(K,n,p_c,p_n) \setminus A_{\tilde \sigma}^{*,k}(K,n,p_c,p_n)) \\
& \geq (1/2) \mathbb{P}(A_{\tilde \sigma}^*(K,n,p_c,p_n))
\end{align*}
and completes the proof of \eqref{eqn: AlessAstar}.
\end{proof}

From now on, fix $K \geq k |\tilde \sigma|$ as in {the proof of} Claim~\ref{clam: clam_head}. Combine \eqref{eq: changearms}, \eqref{eq: tostada}, and \eqref{eqn: AlessAstar} for $n \geq 8K$ to get
\begin{equation}\label{eq: last_banana}
\mathbb{P}_{cr} (A_{\tilde{\sigma}}(K,n)) \leq C \mathbb{P}(A_{\tilde \sigma}^{*,k}(K,n,p_c,p_n)).
\end{equation}
(From here on, we fix $K$.) In this last step, we must force the edge $e$ for which $O_e$ occurs to actually be an outlet, and force the $p_c$-open arms in the $\tilde \sigma$-connection to be invaded. To do this, we note that $A_{\tilde \sigma}^{*,k}(K,n,p_c,p_n)$ implies the event $\hat A_{\tilde \sigma}(2K,n,p_c,p_n)$ that all of the conditions from $A_{\tilde \sigma}^{*,k}(K,n,p_c,p_n)$ occur, but that the $\tilde \sigma$ arms begin at $\partial B(2K)$ (instead of $\partial B(K)$) and the pairs of adjacent closed arms remain $k$-separated in $Ann(2K,n/4)$. Because this event only depends on the state of edges in $B(2K)^c$, we will combine it with a fixed event defined in terms of edges in $B(2K)$ to ensure the additional necessary invasion conditions.

Since there are only finitely many choices of starting points on $\partial B(2K)$ for our arms in $\hat A_{\tilde \sigma}(2K,n,p_c,p_n)$, we can choose vertices $\{x_i\}$, dual vertices $\{y_i\}$, and a constant $C$ such that, conditioned on $\hat A_{\tilde \sigma}(2K,n,p_c,p_n)$ the probability is at least $C$ that $\{x_i\}$ and $\{y_i\}$ are the starting points of $\tilde \sigma$-arms in the definition of $\hat A_{\tilde \sigma}(2K,n,p_c,p_n)$. We simply want to connect these points to arms from the origin. So we consider the event $\hat E$ that the following conditions hold. See Figure \ref{insideB2K} for an illustration.
\begin{enumerate}
\item There are $|\tilde \sigma|_O$ $p_c$-open paths from 0 to the $x_i$'s. They are disjoint outside of $B(|\tilde \sigma|)$.
\item There are $|\tilde \sigma|_C$ disjoint $p_n$-closed dual paths from $\partial B(|\tilde \sigma|)$ to the $y_i$'s.
\item There is a $p_n$-closed dual circuit around 0 in $Ann(|\tilde \sigma|,|\sigma|)$ with $|\tilde \sigma|_O$ defects.
\item There are $|\sigma|$ disjoint paths from $\partial B(|\sigma|)$ to $\partial B(2K)$. $|\sigma|_O$ of these are portions of the $p_c$-open paths from item 1, $|\tilde \sigma|_C$ of these are portions of the $p_n$-closed dual paths from item 2, and $|\sigma|_C - |\tilde \sigma|_C$ of these have no weight restriction, and lie between the adjacent closed dual paths from item 2 in such a way that if we were to count these additional dual paths as closed, then there would be a $\sigma$-connection from $\partial B(|\sigma|)$ to $\partial B(2K)$.
\end{enumerate} 
In item 4, the non-weight restricted paths lie between pairs of closed dual paths in $\tilde \sigma$ and function as the additional closed arms (in the IPC) forming the arm event with sequence $\sigma$.

\begin{figure}
\centering
\includegraphics[scale = 0.35]{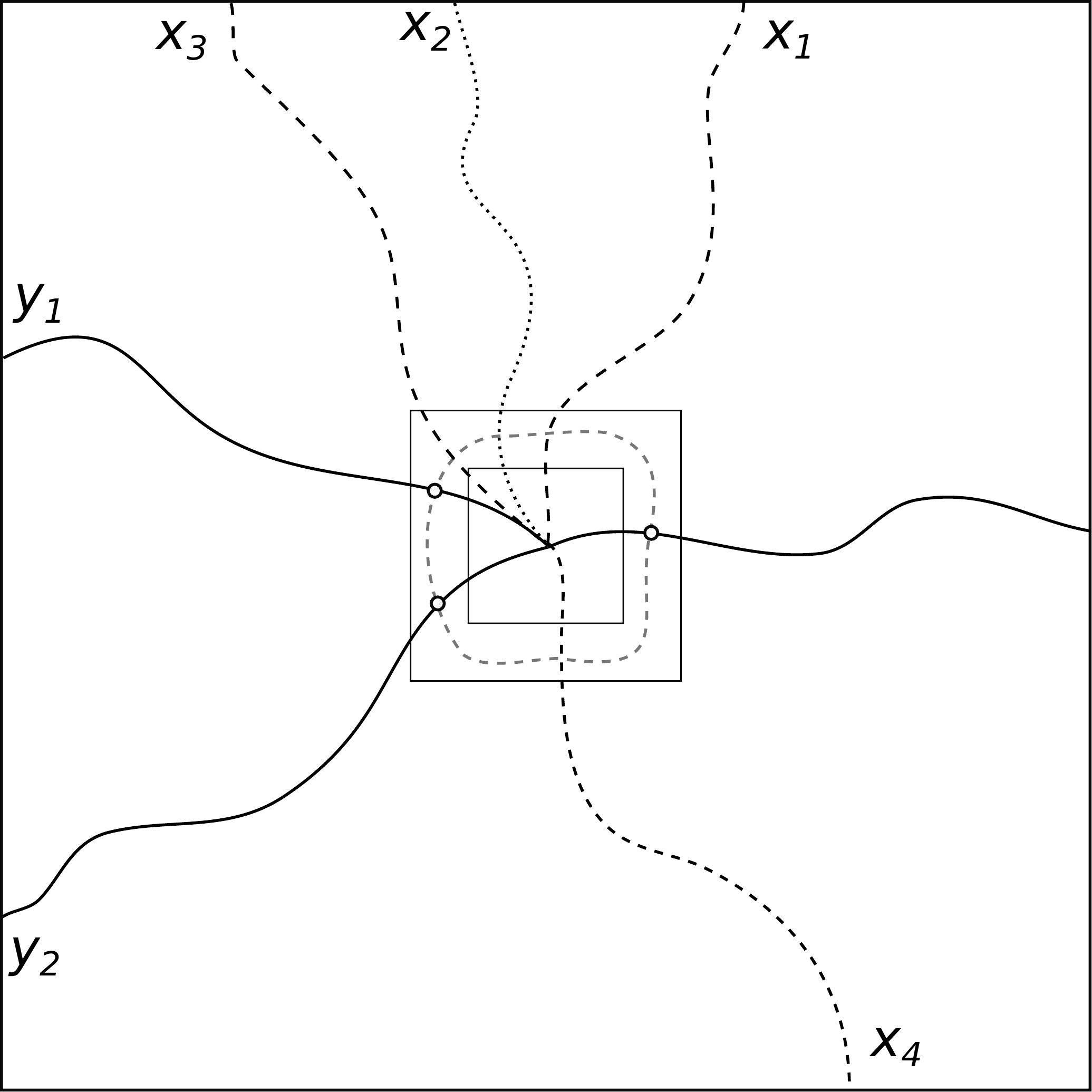}
\caption{The construction inside $B(2K)$ used to build the ``fixed event'' $\hat E$. When this event is intersected with $\hat A_{\tilde \sigma}$ and the event $\hat F$ (which fixes the starting points of the arms in $\hat A_{\tilde \sigma}$), the invasion will have arms as in the definition of $A_\sigma$. The boxes, in order from smallest to largest, are $B(|\tilde{\sigma}|)$, $B(|\sigma|)$, $B(2K)$.} 
\label{insideB2K}
\end{figure}

Calling $\hat F$ the event that our fixed $\{x_i\}$ and $\{y_i\}$ are the starting points on $\partial B(2K)$ of arms in $\hat A_{\tilde \sigma}(2K,n,p_c,p_n)$, one then has by independence
\begin{equation}\label{eq: first_banana}
\mathbb{P}(A_{\tilde \sigma}^{*,k}(K,n,p_c,p_n)) \leq C \mathbb{P}(\hat A_{\tilde \sigma}(2K,n,p_c,p_n), \hat F, \hat E).
\end{equation}
On the event on the right, all $p_c$-open arms from 0 in the definition of $\hat A_{\tilde \sigma}(2K,n,p_c,p_n) \cap \hat E$ must be invaded, and the edge $e$ from the definition of $O_e$ is also invaded -- it is an outlet. For this reason, no $p_n$-open edges are ever invaded, and so the $p_n$-closed dual arms from the definition of $\hat A_{\tilde \sigma}(2K,n,p_c,p_n) \cap \hat E$ are not invaded. Due to the presence of the $p_n$-closed dual circuit with defects from item 3, no edges are invaded in the regions delimited by (a) the adjacent $p_n$-closed dual arms from item 2, (b) the $p_n$-closed dual circuit from item 3, and (c) the closed dual circuit $\mathcal{C}_e$ with one defect from the definition of $O_e$. Therefore the additional paths from item 4 are not invaded. Since the $p_n$-closed dual paths from $\hat A_{\tilde \sigma}(2K,n,p_c,p_n)$ remain at least distance $k$ from each other in $Ann(2K,n/4)$, the additional paths from item 4 can be extended to ones which reach $\partial B(n/4)$, and they will not be invaded. Therefore
\[
\mathbb{P}(\hat A_{\tilde \sigma}(2K,n,p_c,p_n), \hat F, \hat E) \leq \mathbb{P}(A_\sigma(n/4)).
\]
Combine this with \eqref{eq: last_banana} and \eqref{eq: first_banana} to obtain
\[
\mathbb{P}_{cr}(A_{\tilde \sigma}(K,n)) \leq C \mathbb{P}(A_{\sigma}(n/4)).
\]
By quasimultiplicativity of arm events, we finish with our claimed statement \eqref{eq: one_bound}:
\[
\mathbb{P}_{cr}(A_{\tilde \sigma}(n/4)) \leq C \mathbb{P}_{cr}(A_{\tilde \sigma}(K,n)) \leq C \mathbb{P}(A_{\sigma}(n/4)).
\]


\end{proof}

\subsection{Proof of the upper bound}\label{sec: upper-bd}

The next proposition is the upper bound complementing \eqref{eq: one_bound}. We let $\sigma$ be a color sequence with $|\sigma|_O \geq 2$. We must show that there is a $C>0$ such that for all $n$,
\begin{equation}\label{eq: two_bound}
\mathbb{P}(A_\sigma(n))\le C\mathbb{P}_{cr}(A_{\tilde{\sigma}}(n)).
\end{equation}

\begin{proof}
We first split the probability according to the location of the ``first outlet'' $\hat e_1$. This is the unique edge with $t_{\hat e_1} > t_e$ for all invaded edges $e$.
\begin{equation}\label{eqn: outletloc}
\mathbb{P}(A_{\sigma}(n), \hat{e}_1 \in B(n/2)) + \mathbb{P}(A_{\sigma}(n), \hat{e}_1\notin B(n/2)).
\end{equation}
To deal with the first probability, we use a lemma. We write $\hat \tau_1$ for the weight $t_{\hat e_1}$. We will need the following definitions: for $n \geq 1$, set $\log^{(0)}n = n$ and, for $n$ for which it is defined, set $\log^{(k)}n = \log (\log^{(k-1)}n)$. For $n > 10$, put
\[
\log^* n = \max\{ k \geq 0 : \log^{(k)} n > 10\},
\]
and for $n > 10$ and ${j = 1}, \ldots, \log^* n$ with $M>0$ fixed until later, define
\[
p_n(j) = p_{\left\lfloor \frac{n}{M \log^{(j)} n} \right\rfloor}.
\]
(Recall the definition of $p_n$ in \eqref{eq: p_n_def}.) 

\begin{figure}
\centering
\includegraphics[scale = 0.35]{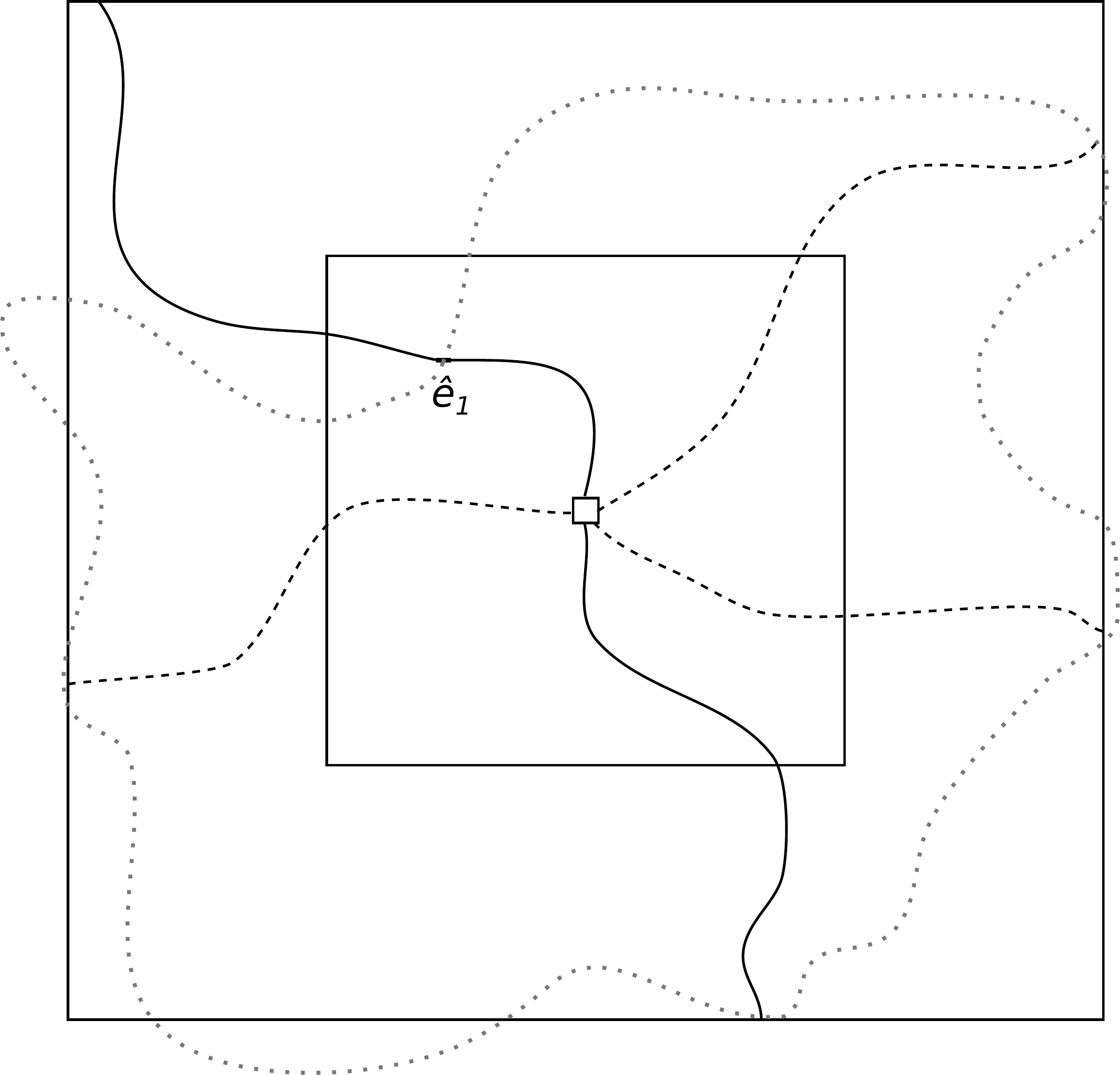}
\caption{An illustration of  Lemma \ref{lma: sigmareduction}. The first outlet $\hat{e}_1$ lies on an open arm. All open arms extend to $\partial B(n)$ (the outer box), so if $\hat{e}_1\in B(n/2)$ (the middle box) and has weight less than $\tau$, there are $\tau$-closed arms from $\hat{e}_1$ to the the extremities of the open arms. These have diameter {at least} $n/2$.}
\label{outlet_two_open}
\end{figure}

\begin{lma}\label{lma: sigmareduction}
Suppose that $A_\sigma(n)$ occurs. If $\hat{e}_1\in B(n/2)$ and $\hat{\tau}_1\le p_{n/2}(j)$, then the following occur.
\begin{enumerate} 
\item If $p_{n/2}(j+1)<\hat \tau_1$, there are two disjoint $p_{n/2}(j+1)$-closed dual arms from $\partial B(n/2)$ to $\partial B(n)$.
\item There are $|\sigma|_O$ disjoint $p_{n/2}(j)$-open arms from $\partial B(|\sigma|)$ to $\partial B(n/2)$.
\item There are $|\tilde{\sigma}|_C$ disjoint $p_c$-closed dual arms from $\partial B(|\sigma|)$ to $\partial B(n/2)$.
\item The arms from items 2 and 3 appear in the sequence specified by $\tilde{\sigma}$.
\end{enumerate}
\end{lma}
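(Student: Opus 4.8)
The plan is to run everything off the structure of the invasion at the instant the first outlet is added. Write $\hat e_1=e_s$, so $\hat e_1$ is invaded at step $s$, and let $V_\infty=\bigcup_m V_m$ be the vertex set of $S$. By the definition of the first outlet, every invaded edge has weight at most $\hat\tau_1$ with equality only at $\hat e_1$; since $\hat\tau_1\le p_{n/2}(j)$ this already makes every edge of $S$ a $p_{n/2}(j)$-open edge. I would next record two facts about the boundary at step $s$. First, $\hat e_1$ minimizes $t_\cdot$ on $\partial G_{s-1}$, so every edge of $\partial G_{s-1}\setminus\{\hat e_1\}$ has weight $>\hat\tau_1$, hence (exceeding the maximal invaded weight) is never invaded; and since $V_{s-1}$ is a finite connected set containing $0$, the dual edges of $\partial G_{s-1}$ contain a circuit $\mathcal C$ around the origin through $\hat e_1^*$, with $\mathcal C\setminus\{\hat e_1^*\}$ a dual path all of whose (primal) edges have weight $>\hat\tau_1$. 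Second, I would invoke the elementary fact that $\limsup_m t_{e_m}\ge p_c$: if $e\notin S$ has one endpoint in $V_\infty$ and the other outside $V_\infty$, then $e\in\partial G_m$ for all large $m$, so $t_e\ge t_{e_{m+1}}$ for all such $m$, hence $t_e\ge p_c$; that is, such an $e$ is automatically $p_c$-closed.

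With this in hand, item 2 is immediate: intersecting the $|\sigma|_O$ invaded arms of $A_\sigma(n)$ with $B(n/2)$ gives $|\sigma|_O$ disjoint arms from $\partial B(|\sigma|)$ to $\partial B(n/2)$ made of edges of $S$, hence $p_{n/2}(j)$-open. For item 1, assume $p_{n/2}(j+1)<\hat\tau_1$; then the dual path $\mathcal C\setminus\{\hat e_1^*\}$ is $p_{n/2}(j+1)$-closed and, because $\hat e_1\in B(n/2)$, has both endpoints in $B(n/2)$. Now use $|\sigma|_O\ge2$ (the standing hypothesis of this subsection): the $\ge2$ disjoint invaded arms each run from $\partial B(|\sigma|)$, which lies inside $\mathcal C$, to $\partial B(n)$, and can cross $\mathcal C$ only at its defect $\hat e_1$; since they are disjoint, at least one does not cross $\mathcal C$, so $\mathcal C$ encloses a point of $\partial B(n)$ and therefore reaches $\ell^\infty$-distance $>n$. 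Hence $\mathcal C\setminus\{\hat e_1^*\}$ leaves $B(n/2)$, reaches past $\partial B(n)$, and returns, so it contains two disjoint crossings of $Ann(n/2,n)$ — the desired two $p_{n/2}(j+1)$-closed dual arms from $\partial B(n/2)$ to $\partial B(n)$.

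For item 3 I would treat each maximal run of $\ell\ge1$ consecutive `$C$' entries of $\sigma$ separately. Such a run is realized in $A_\sigma(n)$ by $\ell$ disjoint non-invaded dual arms flanked by two invaded arms $Q,Q'$, all lying in a sector $\Sigma$ of $Ann(|\sigma|,n/2)$ cut out by $Q$, $Q'$ and arcs of $\partial B(|\sigma|)$ and $\partial B(n/2)$. Existence of a non-invaded dual crossing of $\Sigma$ rules out an invaded crossing of $\Sigma$ from $Q$ to $Q'$, so the invaded vertices of $\Sigma$ reachable from $Q$ inside $\Sigma$ do not meet $Q'$; the dual boundary of this region facing $Q'$ is a dual crossing of $\Sigma$, and the aim is to show it can be routed so that every edge has one endpoint in $V_\infty$ and the other outside $V_\infty$, hence is $p_c$-closed by the last fact of the first paragraph. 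Doing the symmetric construction from $Q'$ yields a second such arm when $\ell\ge2$ (and it coincides with the first when $\ell=1$), so collecting $\min(\ell,2)$ arms per run gives $|\tilde\sigma|_C$ disjoint $p_c$-closed dual arms; item 4 then follows since restriction to $B(n/2)$ preserves the cyclic order of the invaded arms and the retained non-invaded arms sit in the same sectors, reproducing exactly the pattern $\tilde\sigma$. The hard part will be the routing claim in item 3: one must rule out (or circumvent) the scenario in which a non-invaded dual crossing of $\Sigma$ is ``buried'' in the invaded region, with invaded vertices on both sides all along it, so that no crossing can be pushed onto $\partial V_\infty$; I expect to handle this using that $S$ is a tree together with the near-critical control furnished by $\hat\tau_1\le p_{n/2}(j)$. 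The remaining steps are routine once the first-outlet picture and the $p_c$-closedness of boundary edges are established.
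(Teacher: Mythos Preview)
Your treatment of items 1, 2 and 4 is correct and essentially identical to the paper's. The gap is in item 3, and it lies precisely where you flag it as the ``hard part.''

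First, the proposed tools for that hard part do not apply. Under the definition used here, $\partial G_{m-1}$ contains every edge incident to $V_{m-1}$ not yet in $E_{m-1}$, including edges with both endpoints already in $V_{m-1}$; nothing prevents such an edge from being the boundary minimum at some step, so $S$ is \emph{not} a tree in general. The near-critical bound $\hat\tau_1\le p_{n/2}(j)$ is also irrelevant to the routing question.

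Second, the difficulty you anticipate is illusory, because your own ``elementary fact'' holds without the restriction that the other endpoint lie outside $V_\infty$: if $e\notin S$ and $e$ has \emph{any} endpoint in $V_\infty$, then $e\in\partial G_m$ for all large $m$ (since $e\notin E_m$ for all $m$), and your argument gives $t_e\ge p_c$. Once you drop the unnecessary hypothesis, the dual boundary of the invaded-from-$Q$ component in $\Sigma$ is automatically $p_c$-closed, because each of its primal edges is non-invaded and has one endpoint in that component, hence in $V_\infty$. No routing is needed. You would still owe a short argument that for $\ell\ge 2$ the two boundaries (from $Q$ and from $Q'$) are edge-disjoint; this follows since the $\ell\ge 2$ disjoint non-invaded dual arms block invaded primal paths, so the invaded-from-$Q$ and invaded-from-$Q'$ components are separated by at least one full dual arm and hence are not adjacent.

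The paper's argument sidesteps the boundary construction altogether by using the contrapositive of your fact: any $p_c$-open edge incident to an invaded vertex is itself invaded, so any $p_c$-open path in $R$ starting on $\alpha_1$ is entirely invaded. For $\ell=1$ this already shows there is no $p_c$-open crossing of $R$ from $\alpha_1$ to $\alpha_2$, and planar duality produces the $p_c$-closed dual arm directly. For $\ell\ge 2$ the paper first passes to the two \emph{extremal} non-invaded dual arms $\beta_1,\beta_2$ (closest to $\alpha_1$ and $\alpha_2$ respectively), which carve out disjoint subregions $R_1,R_2$; the same duality argument in each yields two disjoint $p_c$-closed arms. This is cleaner than tracing $\partial V_\infty$ and makes the disjointness for $\ell\ge2$ immediate.
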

\begin{proof}

Let us consider these properties in order. See Figure \ref{outlet_two_open} for an illustration.
\begin{enumerate}
\item Just before adding $\hat{e}_1$ to the invasion, the entire invasion is $p_{n/2}(j)$-open and is surrounded by a $p_{n/2}(j+1)$-closed dual circuit around the origin containing $\hat{e}_1^*$. Only one edge, $\hat e_1$, on this closed dual circuit will ever be invaded. {Indeed, if $e$ is another edge on this circuit, we have $t_{e} > t_{\hat e_1}$, since $\hat e_1$ is invaded rather than $e$. On the other hand, $\hat e_1$ has the maximal weight of all invaded edges, which precludes the invasion of $e$.} {Thus,} $\hat e_1$ is an edge whose removal from the invasion disconnects 0 from infinity. Therefore $\hat e_1$ is on exactly one of the open arms from $\partial B(|\sigma|)$ to $\partial B(n)$, and any other open arm must be completely contained in the invasion at that point. To encompass this arm, the $p_{n/2}(j+1)$-closed dual circuit must cross the annulus $Ann(n/2,n)$, and so it must contain two disjoint $p_{n/2}(j+1)$-closed dual arms from $\partial B(n/2)$ to $\partial B(n)$.
\item  Since the first outlet has weight at most $p_{n/2}(j)$, so does every edge in the invasion, including $|\sigma|_O$ open arms from $\partial B(|\sigma|)$ to $\partial B(n/2)$.
\item Make a choice of arms from $\partial B(|\sigma|)$ to $\partial B(n)$ that satisfy the event $A_\sigma(n)$. Fix two open arms $\alpha_1$, $\alpha_2$ in the invasion which appear consecutively in the sequence $\sigma$, but are separated by at least $l \geq 1$ closed dual arms, and let $R$ be the region bounded by the $\alpha_i$'s, $\partial B(|\sigma|)$, and $\partial B(n/2)$. 

If $l=1$, since there is a non-invaded arm separating the two invaded arms, any path in $R$ joining $\alpha_1$ to $\alpha_2$ must contain at least one non-invaded edge. Since any $p_c$-open path joining $\alpha_1$ to $\alpha_2$ in $R$ would be contained in the invasion, there can be no such path in $R$. By duality there is at least one $p_c$-closed dual arm from {{the}} $\partial B(|\sigma|)$ to $\partial B(n/2)$ in $R$.

In the case $l \ge 2$, consider the two extremal non-invaded dual arms in $R$. That is,  $\beta_1$ is a non-invaded dual arm such that the region $R_1$ bounded by $\alpha_1$, $\beta_1$, $\partial B(|\sigma|)$, and $\partial B(n/2)$ is minimal. The other extremal arm $\beta_2$, similarly defined, serves as part of the boundary of a region $R_2$ whose boundary also includes $\alpha_2$. $\beta_1$ and $\beta_2$ are disjoint since $l\ge 2$, and so are $R_1$ and $R_2$. No $p_c$-open path in $R$ whose initial edge touches $\alpha_1$ can exit $R_1$ since such an arc must be invaded. It follows that $R_1\cup \beta_1$ contains a $p_c$-closed dual arm from $\partial B(|\sigma|)$ to $\partial B(n/2)$. The same holds for $R_2$, so there are at least two $p_c$-closed dual arms from $\partial B(|\sigma|)$ to $\partial B(n/2)$ in $R$.

The previous argument applies to any region between two consecutive open arms, and produces a total of $|\tilde{\sigma}|_C$ $p_c$-closed dual arms from $\partial B(|\sigma|)$ to $\partial B(n/2)$.

\item This follows immediately from the way the closed dual arms are obtained in the previous item.
\end{enumerate}
\end{proof}

Given the previous lemma, we now decompose the first probability in \eqref{eqn: outletloc} according to the value of $\hat \tau_1$. In case $\hat{\tau}_1>p_{n/2}(1)$ we bound
\[
\mathbb{P}(A_\sigma(n), \hat e_1 \in B(n/2), \hat \tau_1 > p_{n/2}(1))\le \mathbb{P}(C_n(1)),
\]
where $C_n(j)$ is the event that there are two disjoint $p_{n/2}(j)$-closed arms from $\partial B(n/2)$ to $\partial B(n)$. From \eqref{eq: closed_exponential} and \eqref{eq: D_p_n}, one obtains the upper bound
\[
\mathbb{P}(C_n(1)) \leq C_1 e^{-{2 C_2} \frac{n/2}{L(p_{n/2}(1))}} \leq C_1 e^{-\frac{2C_2}{D} M \log (n/2)}.
\]
By quasimultiplicativity, there is $\alpha>0$ such that for all $n \geq 2$,
\[
\mathbb{P}_{cr}(A_{\tilde \sigma}(n)) \geq Cn^{-\alpha},
\]
so if we choose $M$ large enough,
\begin{equation}\label{eq: first_decompose_term}
\mathbb{P}(A_\sigma(n), \hat e_1 \in B(n/2), \hat \tau_1 > p_{n/2}(1)) \leq \mathbb{P}_{cr}(A_{\tilde \sigma}(n))
\end{equation}
{for all $n$ sufficiently large.}

For the probability of the remaining event, $\{A_{\sigma}(n), \hat e_1 \in B(n/2), \hat{\tau}_1\le p_{n/2}(1)\}$, Lemma \ref{lma: sigmareduction} gives the bound
\begin{align}
&\mathbb{P}(A_\sigma(n), \hat e_1 \in B(n/2), \hat \tau_1 \leq p_{n/2}(1)) \nonumber \\
=~& \sum_{j=1}^{\log^*(n/2)-1} \mathbb{P}(A_\sigma(n), \hat e_1 \in B(n/2), \hat \tau_1 \in (p_{n/2}(j+1), p_{n/2}(j)]) \nonumber \\
+~& \mathbb{P}(A_\sigma(n), \hat e_1 \in B(n/2), \hat \tau_1 \leq p_{n/2}(\log^*(n/2))) \nonumber \\
\leq~& \sum_{j=1}^{\log^*(n/2)-1} \mathbb{P}(A_{\tilde \sigma}(|\sigma|, n/2, p_{n/2}(j), p_c), C_n(j+1)) \nonumber \\
+~& \mathbb{P}(A_{\tilde \sigma}(|\sigma|,n/2, p_{n/2}(\log^*(n/2)), p_c)). \label{eqn: sigmareduced-sum}
\end{align}
In this equation, we are using the events $A_{\tilde \sigma}(m,n,p,q)$ defined near \eqref{eq: changearms}.



The events $C_n(j+1)$ and  $A_{\tilde{\sigma}}(|\sigma|,n/2, p_{n/2}(j), p_c)$ depend on disjoint sets of edges, so by \eqref{eq: closed_exponential} and \eqref{eq: D_p_n}, the sum \eqref{eqn: sigmareduced-sum} is bounded by
\begin{align*}
&\sum_{j=1}^{\log^*(n/2)-1} C_1\mathbb{P}(A_{\tilde{\sigma}}(|\sigma|, n/2, p_{n/2}(j), p_c))\exp\left(-\frac{2C_2}{D} M \log^{(j+1)}(n/2) \right) \\
+~&\mathbb{P}( A_{\tilde{\sigma}}(|\sigma|,n/2,p_{n/2}(\log^*(n/2)), p_c)).
\end{align*}
By an explicit construction, we can change the term $|\sigma|$ to $|\tilde \sigma|$ at the cost of a constant factor, so we obtain
\begin{align}
&\sum_{j=1}^{\log^*(n/2)-1} C\mathbb{P}(A_{\tilde{\sigma}}(n/2, p_{n/2}(j), p_c))\exp\left(-\frac{2C_2}{D} M \log^{(j+1)}(n/2) \right) \nonumber \\
+~&C\mathbb{P}( A_{\tilde{\sigma}}(n/2,p_{n/2}(\log^*(n/2)), p_c)). \label{eq: burrito_suprema}
\end{align}
For the probability $\mathbb{P}(A_{\tilde \sigma}(n/2, p_{n/2}(j), p_c))$, we use quasimultiplicativity and \eqref{eq: changearms} for the upper bound
\begin{align*}
\mathbb{P}(A_{\tilde \sigma}(n/2,p_{n/2}(j), p_c)) &\leq C\mathbb{P}_{cr}\left(A_{\tilde \sigma}\left(\left\lfloor \frac{n/2}{M \log^{(j)}(n/2)} \right\rfloor \right) \right) \\
&\leq C \mathbb{P}_{cr}(A_{\tilde \sigma}(n)) \mathbb{P}_{cr}\left( A_{\tilde \sigma} \left( \left\lfloor \frac{n/2}{M \log^{(j)}(n/2)} \right\rfloor, n \right) \right)^{-1}.
\end{align*}
Once again, by quasimultiplicativity, we get the bound
\begin{equation}\label{eq: arm_change_bound}
\mathbb{P}(A_{\tilde \sigma}(n/2, p_{n/2}(j), p_c)) \leq C \mathbb{P}_{cr}(A_{\tilde \sigma}(n)) \left( M \log^{(j)} n \right)^{C_3}.
\end{equation}
Plug this into \eqref{eq: burrito_suprema} for the bound
\begin{equation}\label{eq: andre_on_holiday_sum}
C {M^{C_3}} \mathbb{P}_{cr}(A_{\tilde \sigma}(n)) \left( 1+ \sum_{j=1}^{\log^*(n/2)-1}  (\log^{(j)} (n/2))^{{C_3 - C_4  M} }\right).
\end{equation}
If $M$ is large enough, {$C_3 - C_4 M \leq -1$} and as in \cite[Eq.~(2.26)]{jarai}, the sum is bounded by a constant independently of $n$. So we finish with an upper bound for \eqref{eqn: sigmareduced-sum} of $C {\mathbb{P}_{cr}}(A_{\tilde \sigma}(n/2))$. Combining this with \eqref{eq: first_decompose_term}, we complete the bound on the first term of \eqref{eqn: outletloc}:
\begin{equation}\label{eq: first_outletloc_final_bound}
\mathbb{P}(A_\sigma(n), \hat e_1 \in B(n/2)) \leq C {\mathbb{P}_{cr}}(A_{\tilde \sigma}(n)).
\end{equation}


We now aim to give a similar bound for the second term in \eqref{eqn: outletloc}. Once again we need a lemma about arm events, but with another definition. We write
\begin{equation}\label{eqn: Hndef}
H_n(j) = \left\{
\begin{array}{c}
\text{there is a } p_n(j)\text{-open circuit } \mathcal{D} \text{ around } 0 \text{ in }  B(n)\setminus B( n/2 )   \\
 \text{ and } \mathcal{D}\leftrightarrow \infty \text{ by a }p_n(j)\text{-open path in } B(n/2)^c 
 \end{array}\right\}.
\end{equation}
As in \cite[Eq. (2.21)]{jarai}, we have, for some $C,c>0$
\[
\mathbb{P}(H_n(j)^c) \le C\exp(-cM\log^{(j)} n) \text{ for all } n,j.
\]

\begin{figure}
\centering
\includegraphics[scale = 0.35]{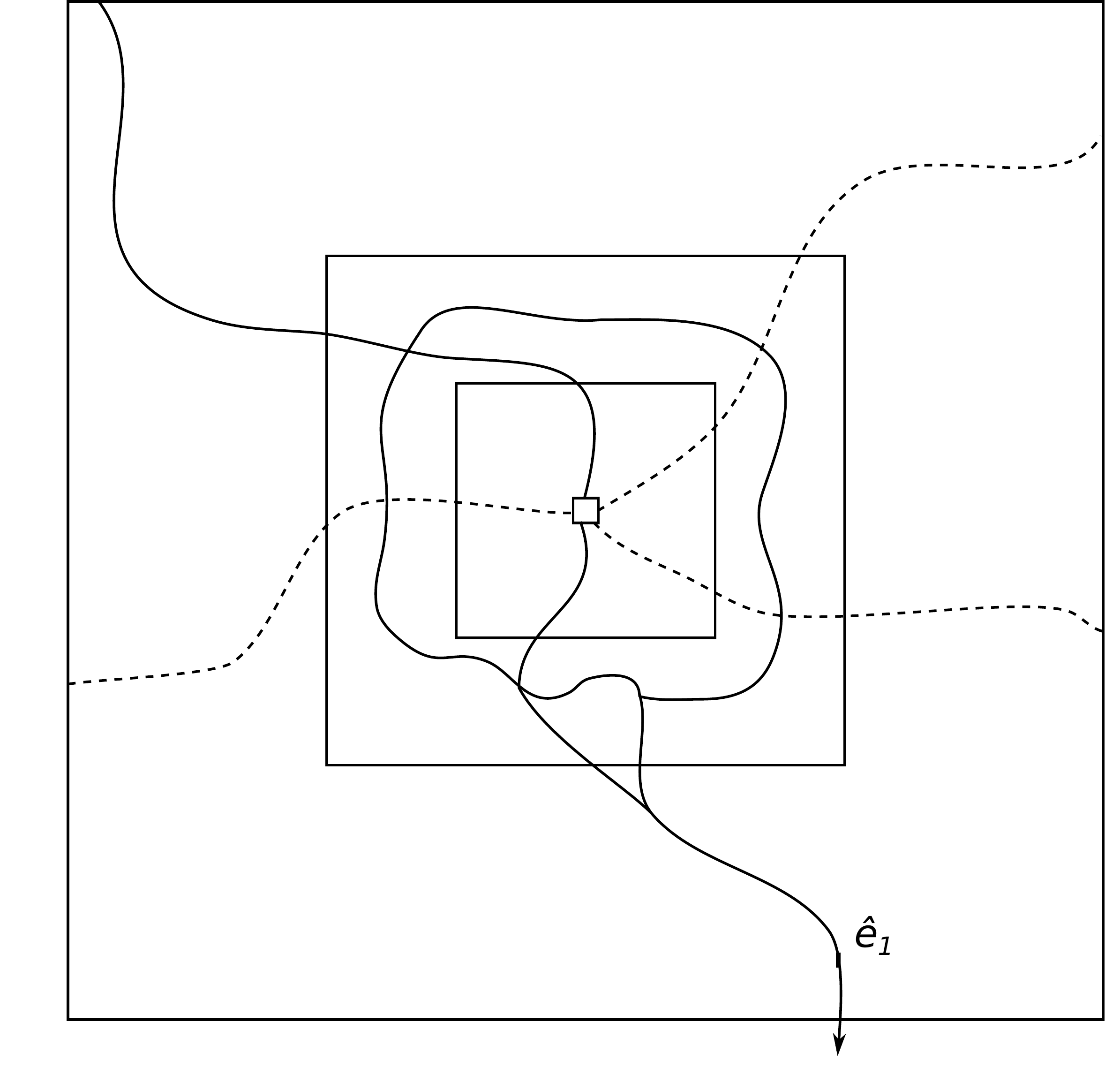}
\caption{An illustration of  Lemma \ref{lma: sigmareduction2}. The occurrence of a $p_{n/2}(j)$ open circuit in $Ann(n/4,n/2)$ $p_{n/2}(j)$-connected to $\infty$ implies that $\tau_{\hat{e}_1}\le p_{n/2}(j)$ if $\hat{e}_1\in B(n/2)^c$. By definition, all invaded arms are $p_{n/2}(j)$-open in this case. The boxes, in order from smallest to largest, are $B(|\sigma|)$, $B(n/4)$, $B(n/2)$, $B(n)$.}
\label{outlet_outside}
\end{figure}

\begin{lma}\label{lma: sigmareduction2}
Suppose $A_\sigma(n)$ occurs. If $\hat{e}_1\notin B(n/2)$ and $H_{n/2}(j)$ occurs, then the following hold.
\begin{enumerate} 
\item There are $|\sigma|_O$ $p_{n/2}(j)$-open arms from $\partial B(|\sigma|)$ to $\partial B(n/4)$.
\item There are $|\tilde \sigma|_C$ $p_c$-closed dual arms from $\partial B(|\sigma|)$ to $\partial B(n/4)$.
\item The arms appear in the sequence specified by $\tilde{\sigma}$.
\end{enumerate}
\end{lma}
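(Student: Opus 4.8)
The plan is to first extract from the hypotheses the single quantitative statement that the first outlet is light, $\hat\tau_1 < p_{n/2}(j)$, and then to obtain items 1--3 by repeating the region-by-region duality argument already used for Lemma~\ref{lma: sigmareduction}. Once $\hat\tau_1 < p_{n/2}(j)$ is in hand, every invaded edge is $p_{n/2}(j)$-open, since $\hat e_1$ carries the maximal weight among invaded edges; item 1 is then immediate, because one may take the $|\sigma|_O$ disjoint invaded open arms from $\partial B(|\sigma|)$ to $\partial B(n)$ provided by $A_\sigma(n)$ and restrict them to $B(n/4)$.

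The core step --- and where I expect the real work to be --- is the bound $\hat\tau_1 < p_{n/2}(j)$; this is the content of Figure~\ref{outlet_outside}. I would argue it as follows. Using $H_{n/2}(j)$, fix a $p_{n/2}(j)$-open circuit $\mathcal{D}$ around $0$ inside $Ann(n/4,n/2)$ together with a $p_{n/2}(j)$-open path from $\mathcal{D}$ to $\infty$, and let $\mathcal{C}_\infty$ be the infinite $p_{n/2}(j)$-open cluster containing both. Since $\mathcal{D}$ separates $0$ from $\infty$ in $\mathbb{Z}^2$ and the invasion cluster contains a path from $0$ to $\infty$, the invasion must include some vertex $v$ of $\mathcal{D}$; let $s$ be the first time with $v \in V_s$. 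Two observations combine. First, $V_{s-1}$ is connected, contains $0$, and avoids the vertex set of $\mathcal{D}$, so it lies in the bounded complementary component of $\mathcal{D}$, which is contained in $B(n/2)$; hence $e_1,\ldots,e_s$ are all edges of $B(n/2)$, and the hypothesis $\hat e_1 \notin B(n/2)$ forces $\hat e_1$ to be invaded at some time $r > s$. Second, for every $s' \geq s$ the finite set $V_{s'}$ contains $v \in \mathcal{C}_\infty$ but not the whole infinite cluster $\mathcal{C}_\infty$, so by connectedness $\partial G_{s'}$ contains a $p_{n/2}(j)$-open edge, and therefore $t_{e_{s'+1}} < p_{n/2}(j)$. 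Taking $s' = r - 1 \geq s$ gives $\hat\tau_1 = t_{\hat e_1} < p_{n/2}(j)$.

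It remains to obtain items 2 and 3, which I would do by running the duality argument of item 3 of Lemma~\ref{lma: sigmareduction} inside $Ann(|\sigma|,n/4)$. Fix a choice of arms realizing $A_\sigma(n)$. For any two open arms $\alpha_1,\alpha_2$ consecutive in $\sigma$ and separated there by $l \geq 1$ non-invaded dual arms, let $R \subseteq Ann(|\sigma|,n/4)$ be the region they bound. A $p_c$-open path in $R$ meeting $\alpha_1$ lies in the $p_c$-open cluster of one of its (invaded) vertices, and the invasion cluster contains the $p_c$-open cluster of every one of its vertices --- the same input used in Lemma~\ref{lma: sigmareduction} --- so such a path would be invaded; as a non-invaded dual arm separates $\alpha_1$ from $\alpha_2$ in $R$, there is no such path. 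By duality $R$ contains a $p_c$-closed dual arm from $\partial B(|\sigma|)$ to $\partial B(n/4)$ when $l=1$, and, applying the same reasoning to the two extremal non-invaded dual arms of $R$ and their disjoint minimal subregions, two such arms when $l \geq 2$. Summing over all consecutive open pairs yields $|\tilde\sigma|_C$ disjoint $p_c$-closed dual arms which, together with the arms of item 1, appear in the cyclic order prescribed by $\tilde\sigma$. The only ingredient here not already present in Lemma~\ref{lma: sigmareduction} is the bound $\hat\tau_1 < p_{n/2}(j)$; the rest is a transcription.
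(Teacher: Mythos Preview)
Your proposal is correct and follows essentially the same approach as the paper. The paper's own proof is a single sentence pointing back to Lemma~\ref{lma: sigmareduction} and noting that the only new ingredient is that $H_{n/2}(j)$ (together with $\hat e_1\notin B(n/2)$) forces $\hat\tau_1\le p_{n/2}(j)$; you have filled in exactly this step in detail, and your duality argument for items 2--3 is a faithful transcription of the one in Lemma~\ref{lma: sigmareduction}.
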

\begin{proof}
The proof is essentially identical to that of Lemma \ref{lma: sigmareduction}. The only difference is that it is now the occurence of $H_{n/2}(j)$ which forces $\hat{\tau}_1\le p_{n/2}(j)$.
\end{proof}
The occurence of $H_{n/2}(j)$ depends only on edges outside $B(n/4)$, so can write:
\begin{align*}
 &\mathbb{P}(A_\sigma (n), \hat{e}_1 \in B^c( n/2 ))\\
\leq~&\sum_{j=1}^{\log^*( n/2  )-1} \mathbb{P}(A_{\tilde{\sigma}}(|\sigma|,n/4,p_{n/2}(j),p_c), \hat{e}_1 \in B^c( n/2 ), H_{ n/2 }(j+1)^c, H_{ n/2  }(j)) \\
+~& \mathbb{P}(A_{\tilde{\sigma}}(|\sigma|,n/4,p_{n/2}(\log^*(n/2)),p_c), \hat{e}_1 \in B^c( n/2 ),H_{ n/2  }(\log^*( n/2 ))) + {\mathbb{P}(H_{n/2}(1)^c)}\\
\le~& C\sum_{j=1}^{\log^*( n/2 )-1} \mathbb{P}(A_{\tilde{\sigma}}(n/4,p_{n/2}(j),p_c))\mathbb{P}(H_{n/2}(j+1)^c)\\
~&+ C\mathbb{P}(A_{\tilde{\sigma}}(n/4, p_{n/2}(\log^*(n/2)),p_c)) + {\mathbb{P}(H_{n/2}(1)^c)}.
\end{align*}

Since $\mathbb{P}(H_{n/2}(j+1)^c)\le C\exp(-cM \log^{(j+1)}(n/2))$, we conclude by a summation similar to \eqref{eq: andre_on_holiday_sum}, since, as in \eqref{eq: arm_change_bound}
\[
\mathbb{P}(A_{\tilde{\sigma}}(n/4,p_{n/2}(j),p_c)) \le  C\mathbb{P}_{cr}(A_{\tilde{\sigma}}(n)) (M\log^{(j)} n)^{C_4}.
\]
We then produce
\[
\mathbb{P}(A_\sigma(n), \hat e_1 \in B^c(n/2)) \leq C \mathbb{P}_{cr}(A_{\tilde \sigma}(n)),
\]
and combine this with \eqref{eq: first_outletloc_final_bound} for the final bound \eqref{eq: two_bound}.
\end{proof}

\section{Proof of Theorem~\ref{thm: only_one_a}}

\subsection{Lower bound for $k \geq 1$} \label{sec: thm2-lwr-bd}

Our next result is the first inequality in Theorem~\ref{thm: only_one_a}. It is a lower bound on the probability of $A_{\sigma_k}(n)$ in the IPC, and we state itagain here for the reader's convenience: for some $\epsilon>0$ independent of $k$ and some $C$ depending on $k$,
\begin{equation}\label{eq: to_prove_outlet_jam}
n^{\epsilon} \mathbb{P}_{cr}(A_{\tilde{\sigma}_k}(n)) \leq C\mathbb{P}(A_{\sigma_k}(n)) \text{ for all } n.
\end{equation}
Here, $\tilde \sigma_k$ is the reduced arm sequence defined in our case by replacing the $k$ `$C$' entries by $\min\{k,2\}$ of them.

\begin{proof}[Proof of \eqref{eq: to_prove_outlet_jam}.]
We will deal with the cases $k \leq 2$ and $k > 2$ separately; first we take $k\leq 2$ so that $\tilde{\sigma}_k = \sigma_k$. Our main goal will be to find a positive density (in scales) of outlets for the IPC on the event $A_{\sigma_k}(n)$. (Recall that $e \in S$ in an outlet if, when $e = e_s$, the $s$-th invaded edge, one has $t_{e_s} > \sup\{t_{e_r} : r > s\}$.) As usual, we will consider only $n$ of the form $n = 2^N$ for some $N > 0$. The reason is quasimultiplicativity: if the statement holds for such $n$, then for a general $n$, putting $\hat n = 2^{\lceil \log_2 n \rceil}$,
\[
n^\epsilon \mathbb{P}_{cr}(A_{\tilde \sigma_k}(n)) \leq C(\hat n)^\epsilon\mathbb{P}_{cr}(A_{\tilde \sigma_k}(\hat n)) \leq C \mathbb{P}(A_{\sigma_k}(\hat n)) \leq  {C}\mathbb{P}(A_{\sigma_k}(n)).
\]

Fix a large positive integer $\kappa$ to be chosen later; this will govern how often we look for outlets. Ultimately, $\kappa$ will not depend on $N$, but only on $N$-independent constants related to, e.g., gluing and arm separation. We also fix some notation to avoid proliferation of symbols. Let $i_0$ be any fixed number with the property $B(|\sigma_k|) \subset B(2^{2\kappa i_0})$ (so that there is enough space in $B(2^{2\kappa i_0})$ to fit $k+1$ disjoint arms). Let $N' = \lfloor N/2\kappa \rfloor - 1$, where we assume $N$ is so large that $N' > i_0$. For each $i = i_0, \ldots, N'$ and $0 \leq a < \kappa$, let $D_i = Ann(2^{2 \kappa i}, 2^{2 \kappa i + 2\kappa})$ and $E_i^a = Ann(2^{2 \kappa i + 2a}, 2^{2 \kappa i + 2a +1 })$. An arbitrary vector $\rho = (\rho(i_0), \, \ldots, \, \rho(N')) \in \{0, 1, \ldots, \kappa-1\}^{N'-i_0+1}$ will denote a particular choice of placement of outlets -- we will demand that exactly one outlet occur in each $D_i$, and that it be placed particularly in some $E_i^a$.  For a given $\rho$, we abbreviate the corresponding near-critical probabilities: $\widetilde p_i := p_{2^{2\kappa i+2\rho(i)}}$.

\begin{figure}
\centering
\includegraphics[scale = 0.35]{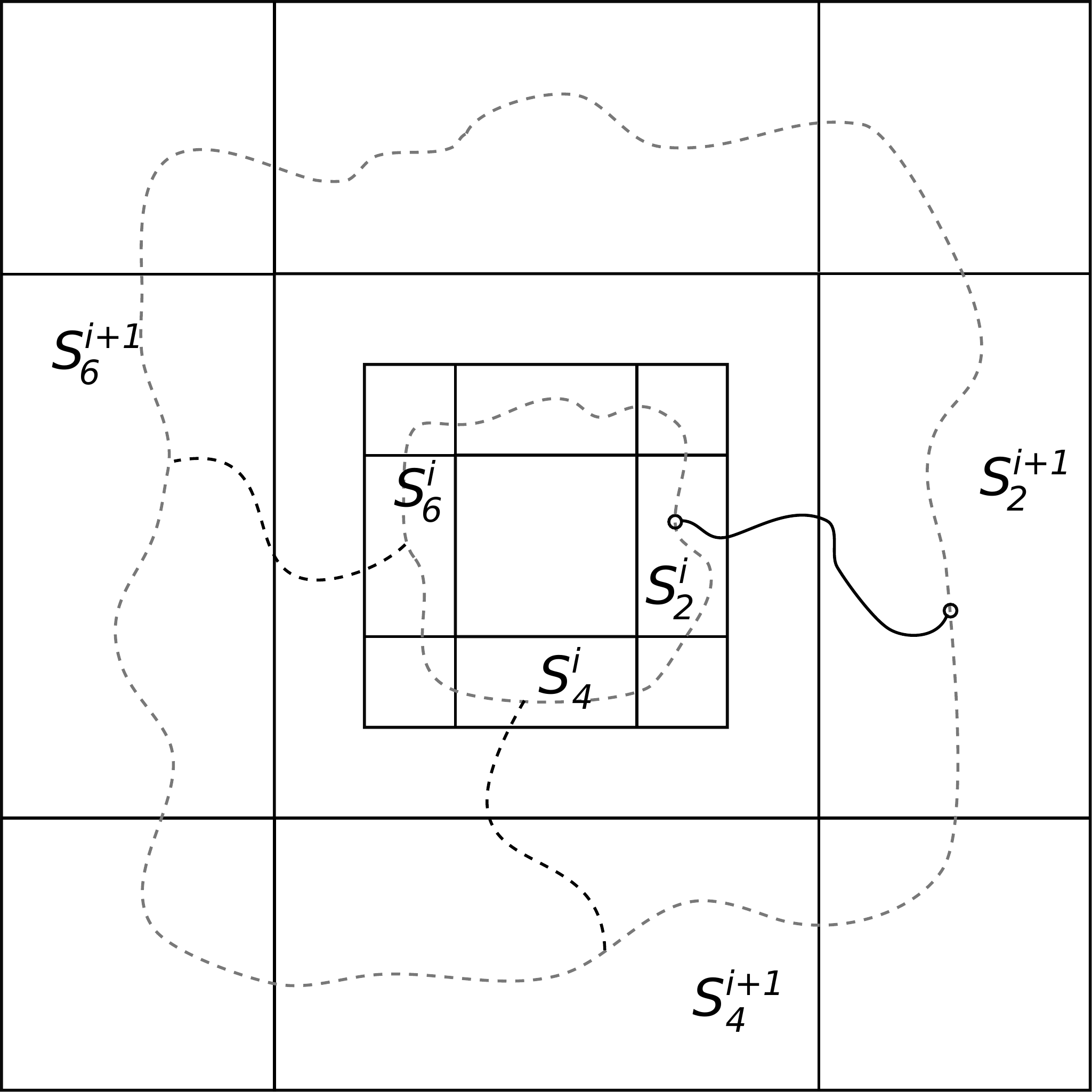}
\caption{An illustration of $E_i^{\rho(i)}$ (inner annulus) and $E_{i+1}^{\rho(i+1)}$ (outer annulus). A $p_c$-open path connects the defect in the $\tilde{p}_i$-closed dual circuit in $S_2^i$ to the defect in the $\tilde{p}_{i+1}$-closed dual circuit in $S_2^{i+1}$. In the case shown, $k=2$ $\tilde{p}_{i+1}$-closed dual paths connect the two circuits. Their intersections with the circuits lie in $S^i_4$, $S^{i+1}_4$ and $S^i_6$, $S_6^{i+1}$, respectively.}
\label{Ei}
\end{figure}

Formally, for each such $\rho$, define the event $K_\rho$ as follows. Split each annulus $E_i^{\rho(i)}$ into 8 rectangular boxes with disjoint interiors numbered $S_1^i, \ldots, S_8^i$ clockwise starting from the upper right corner, in the natural way. (That is, $S_1^i$ is the upper-right of the annulus, $S_2^i$ is the right, $S_3^i$ is the bottom-right, $S_4^i$ is the bottom, and so on.)
\begin{enumerate}
\item For each $i=i_0, \ldots, N'$, there is a $\widetilde p_i$-closed dual circuit with one defect in $E_i^{\rho(i)}$;
\item The aforementioned defected edge $e$ in each $E_i^{\rho(i)}$ is in $S_2^i$ and has $t_e \in (p_c, \widetilde p_i)$;
\item For each $i =i_0, \ldots, N'-1$, there are $k$ disjoint $\widetilde p_{i+1}$-closed dual paths connecting the above mentioned dual circuit in $E_i^{\rho(i)}$ to that in $E_{i+1}^{\rho(i+1)}$ so that their intersections with $E_i^{\rho(i)}$ and $E_{i+1}^{\rho(i+1)}$ are in $S_4^i$, $S_4^{i+1}$, and $S_6^i$, $S_6^{i+1}$, respectively, and the dual circuit in $E_{N'}^{\rho(N')}$ has $k$ disjoint $p_{2^N}$-closed dual connections to $\partial B(2^N)$ so that their intersections with $E_{N'}^{\rho(N')}$ are in $S_4^{N'}$ and $S_6^{N'}$, respectively (only $S_4$'s are used if $k=1$);
\item For each $i = i_0, \ldots, N' - 1$, there is a $p_c$-open path connecting the defects in $E_i^{\rho(i)}$ and $E_{i+1}^{\rho(i+1)}$ so that their intersections with $E_i^{\rho(i)}$ and $E_{i+1}^{\rho(i+1)}$ are in $S_2^i$ and $S_2^{i+1}$, respectively, and the defect in $E_{N'}^{\rho(N')}$ has a $p_c$-open connection to $\partial B(2^{N+1})$ so that its intersection with $E_{N'}^{\rho(N')}$ is in $S_2^{N'}$;
\item There is a $p_{c}$-open circuit in $Ann(2^N,2^{N+1})$ which has a $p_{2^{N+1}}$-open connection to infinity, $k$ disjoint $\widetilde p_{i_0}$-closed dual paths from $\partial B(|\sigma_k|)$ to the dual circuit in $E_{i_0}^{\rho(i_0)}$ so that their intersections with $E_{i_0}^{\rho(i_0)}$ are in $S_4^{i_0}$ and $S_6^{i_0}$ respectively, and a $p_c$-open connection from $0$ to the defect in $D_{i_0}$ so that its intersection with $E_{i_0}^{\rho(i_0)}$ is in $S_2^{i_0}$ (only $S_4$'s are used if $k=1$).
\end{enumerate}
 See Figure \ref{Ei}.

We make two major claims about the events $\{K_\rho\}$:
\begin{enumerate}
\item[a.] There is a constant $c > 0$, uniform in $\rho$ and independent of $\kappa$ and $N$, such that
\[
\mathbb{P}(K_\rho) \geq c^{N' -i_0 + 1} \mathbb{P}_{cr}(A_{\sigma_k}(n)); 
\]
\item[b.] For $\rho \neq \rho'$, the events $K_\rho$ and $K_{\rho'}$ are disjoint.
\end{enumerate}
We first prove the $k \leq 2$ case of the theorem assuming the veracity of these claims. First we show that, on the event $K_\rho$, there are $k$ closed (noninvaded) dual arms from $\partial B(|\sigma_k|)$ to $\partial B(2^N)$. Indeed, assume the contrary. Then, a closed (at the appropriate parameter value) edge $e$ of one of the dual circuits from item 1 or of one of the closed dual arms from items 3 or 5 must be invaded. 

We show inductively in the invasion process that no such edge is invaded. Note that the origin is in the $\widetilde p_{i_0}$ infinite cluster, so the invasion will never take a $\widetilde p_{i_0}$-closed edge. Therefore no closed edge from the closed dual circuits or the closed dual arms is invaded before crossing the defect in $D_{i_0}$ (or ever). Assume now that no closed edge $e$ as above has been invaded by the time directly after we invade the defect in $D_i$, for some $i = i_0, \ldots, N'-1$. The invasion now has a $p_c$-open path to the defect in $D_{i+1}$, so does not invade such $e$ before reaching the ``near'' side of this defect. On the other hand, once it reaches this defect, it will invade the defect before any $\widetilde p_{i+1}$-closed edges. In particular, it will not invade any edge $e$ in one of the closed dual circuits in or dual arms leading to $D_j$, $i_0 \leq j \leq i+1$ before this defect, since all applicable $e$ are $\widetilde p_{i+1}$-closed.

Last, once the defect of the circuit in $D_{N'}$ is invaded, the process will only invade edges of the $p_{2^{N+1}}$-open infinite cluster. Since no $e$ as above is $p_{2^{N+1}}$-open, we see that each $K_\rho$ guarantees $A_{\sigma_k}(n)$. (In the case $k=2$, it is important that the closed arms joining the dual circuits in each $D_i$ intersect $S_i^4$ and $S_i^6$ respectively, so that they can be continued disjointly from circuit to circuit.) On the other hand, using the two claims, we have
\begin{align*}
  \mathbb{P}(A_{\sigma_k}(n)) \geq \mathbb{P}(\cup_\rho K_\rho) &= \sum_{\rho} \mathbb{P}(K_\rho)
  \\ &\geq \kappa^{N'-i_0+1} c^{N'-i_0+1} \mathbb{P}_{cr}(A_{\sigma_k}(n))\ ,
\end{align*}
where we have used that there are $\kappa^{N'-i_0+1}$ possible values of $\rho$. Choosing $\kappa = \lceil 2/c \rceil$ now proves the theorem.

It remains only to show that the two claims above hold. Claim b is easy, since if $\rho(i) \neq \rho'(i)$, then on $K_\rho$ there is a $p_c$-open path across $E_i^{\rho(i)}$, but no such crossing exists on $K_{\rho'}$. To see that Claim a holds, we can construct the event inductively using standard techniques. The outlet construction technique from Lemma~\ref{lma: outlet_construction} allows us to place a $\widetilde p_{i_0}$-closed circuit with the necessary defect in $E_{i_0}^{\rho(i_0)}$ and two additional $p_c$-open arms from it with probability at least some uniform $\delta$. In fact, by mimicking the construction of an outlet with $\tilde \sigma$-arms connecting to its closed dual circuit from the proof of \eqref{eq: tostada}, we can specifically guarantee in addition to the $\widetilde p_{i_0}$-closed dual circuit:
\begin{itemize}
\item  $k$ disjoint $\widetilde p_{i_0}$-closed dual connections from this circuit to $\partial B(2^{2\kappa i_0 + 2 \rho(i_0)})$ and $k$ disjoint $\widetilde p_{i_0+1}$-closed dual connections from this circuit to $\partial B(2^{2\kappa i_0 + 2 \rho(i_0) + 1})$ (remaining in $S_{i_0}^4$ and $S_{i_0}^6$ respectively if $k=2$), and
 \item  $p_c$-open connections from the defect to $\partial B(2^{2\kappa i_0 + 2 \rho(i_0) + 1})$ and $\partial B(2^{2\kappa i_0 + 2 \rho(i_0)})$ (remaining in $S_{i_0}^2$)
\end{itemize}
and that these connections are well-separated and ``extensible'' in the usual sense (as in arm direction techniques in \cite{kestenscaling}). Using generalized FKG allows us to connect the inner open path above to $0$ and the $\widetilde p_{i_0}$-closed dual arms to $\partial B(|\sigma_k|)$ at a cost of at most $c_0\, \mathbb{P}_{cr}(A_{\sigma_k}(2^{2\kappa i_0 + 2\rho(i_0)}))$ for some uniform $c_0$.

The construction of subsequent circuits is the same; each circuit constructed will cost a uniformly bounded constant probability, and connecting the $i$th to the $i+1$st  with the appropriate arms costs a uniform constant multiple of
\[
\mathbb{P}_{cr}\left(A_{\sigma_k}\left(2^{2 \kappa i + 2\rho(i) + 1}, 2^{2 \kappa (i+1) + 2\rho(i+1)} \right)\right)\ . 
\]
Note that this requires, as in our definition above, that the closed arms emanating from $E_i^{\rho(i)}$ are closed at parameter $\widetilde p_{i+1}$; otherwise, the constant would not be uniform over choices of $\rho$.

Therefore, we have for some uniform $c_1 > 0$
\begin{align*}
  \mathbb{P}\left(K_\rho\right) &\geq c_1^{N'-i_0+1} \mathbb{P}_{cr}\left(A_{\sigma_k}(2^{2\kappa i_0 + 2\rho(i_0)}) \right) \mathbb{P}_{cr}\left(A_{\sigma_k}(2^{2\kappa N' + 2\rho(N')+1}, 2^{N+1}) \right) \\
&\quad \times\prod_{i=0}^{N'} \mathbb{P}_{cr}\left(A_{\sigma_k}\left(2^{2 \kappa i + 2\rho(i) + 1}, 2^{2 \kappa (i+1) + 2\rho(i+1)} \right)\right)\\
&\geq c^{N'-i_0+1}\mathbb{P}_{cr}(A_{\sigma_k}(n))\ ,
\end{align*}
where in the last line we have used quasi-multiplicativity. This proves \eqref{eq: to_prove_outlet_jam}.

We now turn to the cases $k >2$. The strategy is a straight-forward combination of the proof of the case $k \leq 2$ and the proof of the first inequality of Theorem~\ref{thm: at_least_two}. Because the details are tedious and not so illuminating, we give here a sketch of the main idea.

\medskip
\noindent
{\bf Step 1.} Defining the events. The strategy is to define an event similar to $K_\rho$, from the previous case, but now starting at some scale $K$. That is, given large positive $K$ so that $B(|\sigma_k|) \subset B(K)$ and $\kappa > 0$, we set $i_0$ so that $B(4K) \subset B(2^{2\kappa i_0})$ and then put, for $n=2^N$, $N' = \lfloor N/2\kappa \rfloor -1$, where $N$ is large enough so that $N' > i_0$. We define $D_i$ and $E_i^a$ as before, and let $\rho = (\rho(i_0), \ldots, \rho(N')) \in \{0, 1, \ldots, \kappa-1\}^{N'-i_0+1}$. Now let $K_\rho = K_\rho(K,\kappa,N)$ be defined similarly as before, the only differences being that (a) we always take $k=2$ in the definition; that is, we always use two closed dual paths in all the items, (b) the $p_{2^N}$-closed dual connections in item 3 from the dual circuit in $E_{N'}^{\rho(N')}$ connect to the $p_c$ open circuit from item 5 in $Ann(2^N,2^{N+1})$, (c) in item 5, the $\widetilde p_{i_0}$-closed dual paths start at $\partial B(K)$ instead of $\partial B(|\sigma_k|)$ and the $p_c$-open path starts from $\partial B(K)$ instead of 0, and (d) in item 5, we place a $p_{2^N}$-closed dual circuit in $Ann(2^N,2^{N+1})$ with one defect (an edge which is not $p_{2^N}$-closed) in the interior of the $p_c$-open circuit.

Exactly the same arguments as in the last case imply that for some $c>0$, uniform in $\rho, \kappa, N$, and $K$, one has
\begin{equation}\label{eq: last_burrito_supreme}
\mathbb{P}(K_\rho) \geq c^{N'-i_0+1}\mathbb{P}_{cr}(A_{\sigma_2}(K,n)),
\end{equation}
and for $\rho \neq \rho'$, the events $K_\rho$ and $K_{\rho'}$ are disjoint.

\medskip
\noindent
{\bf Step 2.} Separating the closed paths. This is the most involved step of the proof. We must switch the strategy back to the one for the first inequality of Theorem~\ref{thm: at_least_two}. In other words, we need to make sure that for our given value of $k$, the closed arms in the definition of $K_\rho$ can be chosen in such a way that they are $k$-separated (they do not come within Euclidean distance $k$ of each other in $Ann(2K,2^{N-1})$). We begin by defining events similar to the $K_\rho$'s but with certain paths being $k$-separated.

Our separated event will be $K_\rho^*$, and it is a subevent of $K_\rho$. On $K_\rho$, make a measurable choice of the closed dual circuits in the $D_i$'s, and all the arms connecting either the circuits together, the innermost circuit to $\partial B(K)$, or the outermost circuit to the $p_c$-open circuit in $Ann(2^N,2^{N+1})$. We define these arms for $i=i_0, \ldots, N'$ as $\gamma_1^i,\gamma_2^i$, and $P^i$, where the $\gamma_j^i$'s are the closed arms connecting the circuit in $D_{i-1}$ to that in $D_i$ (in the case $i=i_0$, the arms start on $\partial B(K)$ and in the case $i=N'$, they end on the $p_c$-open circuit) and the $P^i$'s are the corresponding $p_c$-open arms connecting outlets. Write $x_i,y_i,z_i$ for the starting points of $\gamma_1^i, P^i$, and $\gamma_2^i$, and $x_i',y_i',z_i'$ for the ending points.

$K_\rho^*$ is defined as the subevent of $K_\rho$ on which the following occur:
\begin{enumerate}
\item possibly new closed arms $\hat \gamma_1^i$, $\hat \gamma_2^i$ can be selected such that they are $k$-separated between the circuits in $D_{i-1}$ and $D_i$ (in the case $i=i_0$, we only require them to be $k$-separated outside $B(2K)$ and in the case $i=N'$, inside $B(2^{N-1})$), and
\item defining $a_i,b_i$ for the starting points of $\hat \gamma_1^i$ and $\hat \gamma_2^i$ and $a_i',b_i'$ for the ending points, the arc of the circuit in $D_{i-1}$ (or $\partial B(K)$ in the case $i=i_0$) between $x_i$ and $z_i$ containing $y_i$ contains the corresponding arc between $a_i$ and $b_i$, and similarly for $x_i',y_i',z_i'$ and $a_i',b_i'$.
\end{enumerate}

The main argument of this step is to show that if $K$ is large enough, then for all $\kappa$ fixed large enough, and all $\rho, n$, one has
\begin{equation}\label{eq: new_separation}
\mathbb{P}(K_\rho) \leq 2 \mathbb{P}(K_\rho^*).
\end{equation}
The proof of this is similar to that of Claim~\ref{clam: clam_head}. We sketch the idea here.
\begin{proof}[Sketch of proof of \eqref{eq: new_separation}]
The proof is in two parts: first we need a result similar to Lemma~\ref{lma: lma_1}, stating that if we cannot choose such arms, then we can find an edge $f$ with a certain six-arm event. The second is to show that if $K$ is large enough, then the probability that this six arm event occurs in $Ann(2K,2^{N-1})$ conditional that $K_\rho$ occurs is at most 1/2.

For the first part, our version of Lemma~\ref{lma: lma_1} is the following. The only main difference is that the open arms can have defects, due to the presence of outlets in $K_\rho$. The proof is the same as in Lemma~\ref{lma: lma_1}: one chooses for the arms $\hat\gamma_1^i$, $\hat\gamma_2^i$ the extremal ones relative to $P_i$ (so that the region between them not containing $P_i$ is maximal). Such arms clearly satisfy item 2 above, and if they are not $k$-separated, one finds a six-arm event.
\begin{lma}\label{lma: lma_2}
For $n$ and $K$ as above, if $K_\rho$ occurs but $K_\rho^*$ does not, then there is an edge $f \in Ann(2K,2^{N-1})$ with the following six-arm conditions to distance $|f|/2$:
\begin{enumerate}
\item four disjoint $\widetilde p_i$-closed dual arms from $B(f,2k)$ (the rectangular box of sidelength $2k$ centered on the midpoint of $f$) to $\partial B(f,|f|/2)$, and 
\item two additional $\widetilde p_i$-open arms from $B(f,2k)$ to $\partial B(f,|f|/2)$, each with at most one defect. 
\end{enumerate}
\end{lma}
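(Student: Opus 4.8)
The plan is to transcribe the proof of Lemma~\ref{lma: lma_1}, with the chain of $\widetilde p_i$-closed dual circuits $\mathcal{C}^i$ (one in each $D_i$, each carrying its single defect $e_i\in S_2^i$) playing the role of the circuit $\mathcal{C}_e$, and with the $p_c$-open paths $P^i$ joining consecutive defects playing the role of the flanking $p_c$-open arms $P_i^1,P_i^2$. First I would fix, in a measurable way, a family of arms realizing $K_\rho$: the circuits $\mathcal{C}^i$ and their defects $e_i$, the pairs $\gamma_1^i,\gamma_2^i$ of $\widetilde p_i$-closed dual arms connecting $\mathcal{C}^{i-1}$ to $\mathcal{C}^i$ (meeting them in the sectors $S_4,S_6$), and the paths $P^i$ (meeting the circuits in $S_2$); for the extreme indices, ``$\mathcal{C}^{i-1}$'' is read as $\partial B(K)$ and the outermost ``$\mathcal{C}^i$'' as the $p_c$-open circuit in $Ann(2^N,2^{N+1})$, as in the definition of $K_\rho^*$. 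For each $i$, cut the annular region between $\mathcal{C}^{i-1}$ and $\mathcal{C}^i$ along $P^i$ to obtain a topological rectangle $R_i$, and inside $R_i$ take $\hat\gamma_1^i,\hat\gamma_2^i$ to be the two extremal $\widetilde p_i$-closed dual crossings of $R_i$, counterclockwise-most and clockwise-most, so that the sub-region of $R_i$ between them is maximal. These are again $\widetilde p_i$-closed dual arms joining $\mathcal{C}^{i-1}$ to $\mathcal{C}^i$, and a routine planarity argument shows that the arcs their endpoints cut out on $\mathcal{C}^{i-1}$ and $\mathcal{C}^i$ contain the arcs cut out by the endpoints of $\gamma_1^i,\gamma_2^i$, so condition~(2) in the definition of $K_\rho^*$ holds automatically. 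Consequently, if $K_\rho$ holds but $K_\rho^*$ fails, condition~(1) must fail, namely for some $i$ the arms $\hat\gamma_1^i$ and $\hat\gamma_2^i$ come within Euclidean distance $k$ of one another inside $Ann(2K,2^{N-1})$.

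Fix such an $i$ and choose $f\in Ann(2K,2^{N-1})$ with $\hat\gamma_1^i\cap B(f,k)\neq\emptyset$ and $\hat\gamma_2^i\cap B(f,k)\neq\emptyset$. Since $f$ lies on arms joining $D_{i-1}$ to $D_i$, the quantity $|f|$ is comparable to the scale of $D_i$, so in particular $|f|/2\le L(\widetilde p_i)$ by \eqref{eq: D_p_n}, and, provided $\kappa$ is large, $B(f,|f|/2)$ meets at most one of the circuits $\mathcal{C}^{i-1},\mathcal{C}^i$ (the scale ratio $2^{2\kappa}$ between them dwarfs $|f|$). Because $\hat\gamma_1^i$ and $\hat\gamma_2^i$ each cross $R_i$ from $\mathcal{C}^{i-1}$ to $\mathcal{C}^i$, each of them traverses $B(f,|f|/2)\setminus B(f,2k)$ and therefore contributes two disjoint arms from $B(f,2k)$ to $\partial B(f,|f|/2)$, yielding the four disjoint $\widetilde p_i$-closed dual arms of item~1. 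For the two $\widetilde p_i$-open arms of item~2, I would argue by extremality exactly as in Lemma~\ref{lma: lma_1}: on the side of $\hat\gamma_1^i$ away from $\hat\gamma_2^i$ there is no $\widetilde p_i$-closed dual crossing of the corresponding sliver of $R_i$, so by duality every edge of $\hat\gamma_1^i$ has a $\widetilde p_i$-open path leaving on that side, and symmetrically for $\hat\gamma_2^i$; truncating these paths at $\partial B(f,|f|/2)$ and using that they leave $B(f,2k)$ on opposite sides of the four closed arms produces the two disjoint $\widetilde p_i$-open arms.

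The one genuinely new ingredient is the possible defect on each open arm. An open arm that is forced to leave $B(f,|f|/2)$ by crossing the circuit it meets can do so only at that circuit's defect. If the circuit is the outer one $\mathcal{C}^i$, this is harmless, since $t_{e_i}\in(p_c,\widetilde p_i)$ makes $e_i$ itself $\widetilde p_i$-open. If it is $\mathcal{C}^{i-1}$, then $\mathcal{C}^{i-1}$ is $\widetilde p_{i-1}$-closed, hence $\widetilde p_i$-closed (because $\widetilde p_{i-1}\ge\widetilde p_i$), while its defect $e_{i-1}$ only satisfies $t_{e_{i-1}}\in(p_c,\widetilde p_{i-1})$ and so may fail to be $\widetilde p_i$-open; this single edge is the allowed defect. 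As $B(f,|f|/2)$ meets at most one circuit, each open arm carries at most one defect, which is the assertion of the lemma. I expect this last piece of bookkeeping to be the main obstacle: one must check that the two extremal open paths can genuinely be routed within $R_i$ out to $\partial B(f,|f|/2)$ except for at most one forced crossing of a circuit at its defect, that they stay disjoint inside $B(f,|f|/2)$, and that the boundary indices $i=i_0$ (where a circuit is replaced by $\partial B(K)$, and $K$ must be taken large) and $i=N'$ (replaced by the $p_c$-open circuit) cause no trouble. Everything else is a direct transcription of the proof of Lemma~\ref{lma: lma_1}.
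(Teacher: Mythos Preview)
Your proposal is correct and follows the same approach as the paper's (very terse) proof: take $\hat\gamma_1^i,\hat\gamma_2^i$ to be the extremal $\widetilde p_i$-closed dual crossings of the region between $\mathcal{C}^{i-1}$ and $\mathcal{C}^i$ relative to $P^i$, so that condition~(2) of $K_\rho^*$ is automatic, and then locate a six-arm edge where $k$-separation fails. One small correction: your claim that $B(f,|f|/2)$ meets at most one of $\mathcal{C}^{i-1},\mathcal{C}^i$ can fail in the edge case $\rho(i-1)=\kappa-1,\ \rho(i)=0$ (where the two annuli $E_{i-1}^{\rho(i-1)}$ and $E_i^{\rho(i)}$ are only a factor~$4$ apart in scale), but the conclusion survives anyway---the two open arms can be routed along $P^i$ in opposite directions, and the outward one crosses only $e_i$, which is already $\widetilde p_i$-open, while the inward one crosses only $e_{i-1}$ (the single permitted defect) and exits $B(f,|f|/2)$ before reaching $\mathcal{C}^{i-2}$ since $|f|\ge 2^{2\kappa(i-1)}$.
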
 

Set $E_f$ to be the event described in Lemma~\ref{lma: lma_2}. Then one has
\[
\mathbb{P}(K_\rho \setminus K_\rho^*) \leq \sum_{f \in Ann(2K,2^{N-1})} \mathbb{P}(K_\rho, E_f).
\]
The second part of this step is to bound the sum by $(1/2) \mathbb{P}(K_\rho)$ if $K$ is large. This is accomplished similarly to before. One can show using independence and a gluing argument that there is $C$ such that
\[
\mathbb{P}(K_\rho,E_f) \leq C \mathbb{P}(K_\rho)\mathbb{P}(E_f).
\]
The argument is notationally much more complicated than before, but the idea is the same. One uses independence to upper bound by a product of three probabilities: $\mathbb{P}(E_f)$, the probability of an event comprised of the conditions of $K_\rho$ from $B(K)$ to $B(|f|/2)$, and an event comprised of the conditions of $K_\rho$ in $B(2|f|)^c$. Then one glues all the paths from the last two events, possibly reconstructing an outlet in $Ann(|f|/2,2|f|)$.

Still one has $\mathbb{P}(E_f) \leq |f|^{-2-\delta}$. The difference here is that the $E_f$ event now contains defected paths, but these only add factors of $\log^2 |f|$ (as in \cite[Proposition 18]{nolin}), and one can simply take $\delta$ smaller to compensate. Therefore one has
\begin{align*}
\mathbb{P}(K_\rho \setminus K_\rho^*) &\leq C \mathbb{P}(K_\rho) \sum_{f \in B(2K)^c} |f|^{-2-\delta} \\
&\leq (1/2) \mathbb{P}(K_\rho)
\end{align*}
if $K$ if fixed large enough.
\end{proof}

\medskip
\noindent
{\bf Step 3.} Forcing $K_\rho^*$ to imply $A_{\sigma_k}(n)$. For this last step, we want to ensure that the open paths connecting the outlets in $K_\rho^*$ are invaded, and that the regions between all the closed arms are not invaded. Since all of the closed arms are $k$-separated, we can then squeeze additional non-invaded arms between them, forcing $A_{\sigma_k}(n)$.

We do this just as in the arguments starting in the paragraph after \eqref{eq: last_banana}: we first note that $K_\rho^*$ implies the event $\hat K_\rho^*$, which is the same as the event $K_\rho^*$, but all the arms start from $\partial B(2K)$ and remain $k$-separated within $B(2^{N-1})$. Then we fix dual vertices $\hat x_1,\hat x_2$ and a vertex $\hat y_1$, all on $\partial B(2K)$ so that on $\hat K_\rho^*$, with positive probability, the open arm starts at $\hat y_1$ and the closed arms start at the $\hat x_i$'s. Next we define an event in $B(2K)$ similar to $\hat E$: define $E'$ as the event that the following conditions hold.
\begin{enumerate}
\item There is a $p_c$-open path from 0 to $\hat y_i$.
\item There are two disjoint $\widetilde p_{i_0}$-closed dual paths from dual neighbors of $0$ to the $\hat x_i$'s.
\item There is a $\widetilde p_{i_0}$-closed dual circuit around 0 in $Ann(3,|\sigma_k|)$ with one defect.
\item There are $k+1$ disjoint paths from $\partial B(|\sigma_k|)$ to $\partial B(2K)$. One of these is a portion of the $p_c$-open path from item 1, two of these are portions of the $\widetilde p_{i_0}$-closed dual paths from item 2, and $k-2$ of these have no weight restriction, and lie between the closed dual paths from item 2 in such a way that if we were to count these additional dual paths as closed, then there would be a $\sigma_k$-connection from $\partial B(|\sigma_k|)$ to $\partial B(2K)$.
\end{enumerate}
Since $K$ was fixed in the last step, we can find $C>0$ such that
\[
\mathbb{P}(K_\rho^* \cap E') = \mathbb{P}(K_\rho^*) \mathbb{P}(E') \geq C \mathbb{P}(K_\rho).
\]
On the other hand, $K_\rho^* \cap E'$ implies the invasion event $A_{\sigma_k}(2^{N-1})$. Indeed, on $K_\rho^* \cap E'$, we can follow two closed arms from neighbors of the origin up to $\partial B(2^{N-1})$. We follow arms from the origin to the circuit in $D_{i_0}$, take a portion of the circuit over to the closed arm from this circuit to the one in $D_{i_0+1}$, etc., all the way up to the closed arm from the circuit in $D_{N'}$ to the $p_{2^N}$-closed dual circuit in $Ann(2^N,2^{N+1})$. By the last step and the definition of $E'$, the arms are $k$-separated from $\partial B(|\sigma_k|)$ to the first circuit, from the second to the third, etc., up to $\partial B(2^{N-1})$. Due to item 2 in the definition of $K_\rho^*$ and the placement of the arms in the boxes $S_2^i, S_4^i, S_6^i$ from the definition of $K_\rho$, the portions of the circuits we take to build these arms are also $k$-separated. Therefore these two closed arms followed from $\partial B(|\sigma_k|)$ to $\partial B(2^{N-1})$ are $k$-separated.

Last, one can inductively prove (similar to the induction from the case $k\leq 2$) that the region between these two closed arms which does not contain any of the $p_c$-open arms is not invaded. Therefore one has 
\[
K_\rho^* \cap E' \subset A_{\sigma_k}(2^{N-1}).
\]
Noting that for distinct $\rho,\rho'$, the events $K_\rho^*\cap E'$ and $K_{\rho'}^*\cap E'$ are disjoint, we then finish with the same computation as in the last case, using \eqref{eq: last_burrito_supreme}:
\begin{align*}
\mathbb{P}(A_{\sigma_k}(2^{N-1})) \geq \mathbb{P}\left( \cup_\rho \left\{ K_\rho^* \cap E'\right\} \right) &= \sum_\rho \mathbb{P}(K_\rho^* \cap E') \\
&\geq C \sum_\rho \mathbb{P}(K_\rho) \\
&\geq C c^{N'-i_0+1} \kappa^{N'-i_0+1} \mathbb{P}_{cr}(A_{\sigma_2}(K,n)) \\
&\geq C c^{N'-i_0+1} \kappa^{N'-i_0+1} \mathbb{P}_{cr}(A_{\sigma_2}(n)).
\end{align*}
Taking $\kappa = \lceil 2/c \rceil$ finishes the proof.

\end{proof}

\subsection{Upper bound for $k=1,2$}
\label{sec: thm2-upper-bd}
Our aim is to prove the other bound of the second theorem: for $k=1,2$ and for some $\epsilon>0$, $C>0$,
\begin{equation}\label{eq: to_prove_kesten_zhang}
\mathbb{P}(A_{\sigma_k}(n)) \leq Cn^{-\epsilon} \mathbb{P}_{cr}(A_{\hat \sigma_k}(n)),
\end{equation}
where $\sigma_k$ is the sequence of one `$O$' and $k$ `$C$' entries, and $\hat \sigma_k$ is the sequence of zero `$O$' and $k$ `$C$' entries.

\begin{proof}


We begin with a lemma, which characterizes the event that there is a non-invaded path crossing an annulus in the invasion. (The complement is the event that there is an invaded circuit around 0.)
\begin{lma}\label{lma: inv-circuit}
For $1 \leq m \leq n$, there exists an open circuit around 0 in $Ann(m,n)$ in the invasion cluster $S$ if and only if there is $p \in [0,1]$ such that both of the following hold:
\begin{enumerate}
\item There is a $p$-open circuit $C$ around 0 in $Ann(m,n)$.
\item There is a $p$-closed dual circuit $C'$ around $C$.
\end{enumerate}
\end{lma}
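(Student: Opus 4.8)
The plan is to prove both directions by analyzing the invasion dynamics in terms of the ``next edge to be invaded'' threshold at the moment the invasion first enters the annulus, combined with the standard topological duality between open circuits in $Ann(m,n)$ and closed dual paths crossing it. First I would set up notation: let $\tau^*$ be the supremum of $t_{e_s}$ over all invaded edges $e_s$ that are invaded \emph{after} the first time the invasion graph $G_s$ reaches $\partial B(m)$ — more precisely, I will want the correct threshold so that ``invaded'' coincides with ``reachable from $0$ by low-weight edges'' in the relevant region. The cleanest bookkeeping is to use the fact (standard for invasion percolation) that an edge $e$ in $Ann(m,n)$ with both endpoints outside the already-invaded region at a given time is invaded precisely when, at some later stage, there is a path from the current invasion graph to $e$ all of whose edges have weight below the weight of the edge then being invaded; taking a limit, one gets a well-defined threshold $p$ governing invasion inside the annulus.

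For the ``if'' direction, suppose $p$ satisfies items 1 and 2. The $p$-closed dual circuit $C'$ around $C$ encloses $C$, so every edge of $C$ has weight $< p$ while every edge crossed by $C'$ has weight $\geq p$. I would argue that the invasion, once it reaches the region bounded inside by $C'$, can never cross $C'$ until it has exhausted all $p$-open edges available there, because any boundary edge whose weight is $< p$ is preferred over the $p$-closed dual edges of $C'$; iterating, the entire $p$-open cluster of $C$ inside $C'$ (which contains the circuit $C$) gets invaded before any edge of $C'$ — in fact before $0$ is separated from that cluster. Since $0$ lies inside $C$, once $C$ is fully invaded it forms an invaded open circuit around $0$ in $Ann(m,n)$, as desired. (One must be slightly careful that the invasion actually reaches the annulus and that $0$ is inside $C'$; both follow from $C'$ surrounding $C$ which surrounds $0$, and from the invasion growing to infinity a.s.)

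For the ``only if'' direction, suppose there is an invaded open circuit $\Gamma$ around $0$ in $Ann(m,n)$. Let $e^\star$ be the last edge of $\Gamma$ to be invaded and set $p = t_{e^\star} + \delta$ for a sufficiently small $\delta > 0$ (or simply $p$ slightly above $\max_{e \in \Gamma} t_e$); then $\Gamma$ is a $p$-open circuit, giving item 1. For item 2, I would use the defining property of an outlet / the maximality structure: at the moment $e^\star$ is invaded, its other endpoint is not yet in the invasion, and the key point is that the set of edges invaded \emph{after} $e^\star$ all have smaller weight than $t_{e^\star}$ (here one should pick $e^\star$ to be the invaded edge of $\Gamma$ of maximal weight, not merely the last). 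Hence at the time just before $e^\star$ is added, the current invasion boundary contains a dual circuit around the current invasion graph consisting of edges all of weight $\geq t_{e^\star}$ except $e^\star$ itself; but we also know $e^\star$ is never the minimal-weight boundary edge again, and with $p$ chosen just above $t_{e^\star}$ every edge of that boundary dual circuit except possibly $e^\star$ is $p$-closed. I then enlarge/adjust: since $\Gamma$ is fully invaded and is a circuit around $0$, there is in fact a $p$-closed dual circuit surrounding $\Gamma$ — take the innermost $p$-closed dual circuit enclosing $\Gamma$, which exists because otherwise a $p$-open path would escape from $\Gamma$ to $\infty$, contradicting that the invasion beyond $e^\star$ uses only edges of weight $< t_{e^\star} < p$ and hence cannot be a $p$-open connection to infinity avoiding $\Gamma$'s cluster in the required way. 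This yields item 2.

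The main obstacle I expect is making the ``only if'' direction fully rigorous: pinning down the exact threshold $p$ and proving that a $p$-closed dual circuit genuinely surrounds the invaded circuit $\Gamma$. The subtlety is that ``invaded'' is not simply ``$p$-open connected to $0$'' for a fixed $p$ — the invasion threshold drifts downward over time (the sequence of outlet weights is decreasing) — so one must localize attention to the finite time window during which $\Gamma$ is completed and argue that from that window onward no edge of weight $\geq t_{e^\star}$ is ever touched, which is exactly what makes the surrounding dual circuit $p$-closed \emph{permanently}. I would handle this by choosing $e^\star$ as the maximal-weight edge of $\Gamma$, invoking that every later-invaded edge has strictly smaller weight (so the global ``first outlet after this point'' has weight $< t_{e^\star}$), and then a planar duality argument (as in the proof of Lemma~\ref{lma: outlet_construction}, items 1--2) converts ``$0$ cannot be connected to $\infty$ by weight-$\geq t_{e^\star}$... '' — rather, ``the invaded region containing $\Gamma$ is separated from $\infty$ within the never-invaded edges'' — into the existence of the required $p$-closed dual circuit around $C = \Gamma$.
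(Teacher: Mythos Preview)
Your ``if'' direction is essentially the paper's argument and is correct in spirit, though the phrasing ``the entire $p$-open cluster of $C$ inside $C'$ gets invaded'' is slightly loose: the invasion need not be $p$-open connected to $C$. What is true is that the invasion must \emph{topologically} reach a vertex of $C$ before it can touch any edge dual to $C'$ (since $C$ separates $0$ from those edges), and once it touches a vertex of $C$ the remaining edges of $C$ are boundary edges of weight $<p$, hence are all invaded before any $p$-closed edge dual to $C'$.

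Your ``only if'' direction has a genuine gap. Two of your key assertions are false. First, ``at the moment $e^\star$ is invaded, its other endpoint is not yet in the invasion'': in fact \emph{both} endpoints of $e^\star$ are already in $V_{s-1}$ at that moment. Every other edge of $\Gamma$ has weight strictly less than $t_{e^\star}$, so if any such edge were still in $\partial G_{s-1}$ it would be chosen before $e^\star$; walking around $\Gamma$ from an endpoint of $e^\star$ known to lie in $V_{s-1}$, one sees inductively that all of $\Gamma\setminus\{e^\star\}$ already lies in $E_{s-1}$. Second, and as a direct consequence, ``the set of edges invaded after $e^\star$ all have smaller weight than $t_{e^\star}$'' is false: since invading $e^\star$ adds no new vertex, $\partial G_s = \partial G_{s-1}\setminus\{e^\star\}$, and the very next invaded edge has weight strictly \emph{greater} than $t_{e^\star}$. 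So $e^\star$ is never an outlet, and your duality argument (``otherwise a $p$-open path would escape from $\Gamma$ to $\infty$, contradicting that the invasion beyond $e^\star$ uses only edges of weight $< t_{e^\star}$'') has no contradiction to appeal to.

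The fix, which is the paper's route, is to exploit rather than fight the first observation. Since both endpoints of $e^\star$ lie in $V_{s-1}$, the edge $e^\star$ does \emph{not} belong to the dual \emph{external} edge boundary $C'$ of $V_{s-1}$ (whose edges by definition have one endpoint outside $V_{s-1}$). Every edge dual to $C'$ lies in $\partial G_{s-1}$ and hence has weight $\geq t_{e^\star}$; as $e^\star$ is excluded, all weights on $C'$ are strictly greater than $t_{e^\star}$. Choose any $p$ with $t_{e^\star} < p < \min\{t_e : e^*\in C'\}$: then $\Gamma$ is $p$-open and $C'$ is $p$-closed. Since $C'$ surrounds $V_{s-1}\supset \Gamma\setminus\{e^\star\}$, and $C'$ and $\Gamma$ share no edge (their weights lie on opposite sides of $p$), the dual circuit $C'$ surrounds $\Gamma$, giving item~2.
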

\begin{proof}
First suppose that items 1 and 2 occur. Since the invasion must take an edge dual to $C'$, let $s$ be the first number such that $e_s$ (the $s$-th invaded edge) is in $C'$. At time $s-1$, the invasion must have intersected $C$ and therefore invaded all of $C$.

Conversely, suppose $C$ is an invaded circuit around $0$ in $Ann(m,n)$. Let $p' = \max\{t_e : e \in C\}$ and let $\hat e$ be the edge with $t_{\hat e} = p'$. At the moment directly before the invasion takes $\hat e$, the dual external edge boundary of the invasion contains a dual circuit $C'$ around 0 that is $p'$-closed. The dual external edge boundary of a set $V$ of vertices is the set {of} dual edges $e^*$ such that $e = \{v,w\}$, $v \in V$, $w \notin V$, and there exists an infinite vertex self-avoiding path in $\mathbb{Z}^2$ from $w$ that does not touch $V$. We make the following two observations:
\begin{enumerate}
\item \emph{$C'$ does not intersect $C$.} Since $C'$ is $p'$-closed and $C$ is $p'$-open, these circuits can only intersect at $\hat e$. If $C'$ contained $(\hat e)^*$, then $C'$ would cross $C$ at the edge $\hat e$ into the interior of $C$, but would not be able to cross again to reach the exterior. Thus they cannot intersect at all.
\item \emph{$C'$ is in the exterior of $C$.} At the moment after invading $\hat e$, the dual external edge boundary of the invasion still contains the dual circuit $C'$. This is because $\hat{e}$ is part of the circuit $C$, which lies entirely in the invasion. Hence both endpoints of $\hat{e}$ are already in the invasion graph at the step preceding the addition of $\hat{e}$. It follows that the dual external edge boundary does not change when $\hat{e}$ is added. By the preceding claim, either $C'$ is in the exterior of $C$, or $C$ it is separated from the origin by $C'$. The second case can be ruled out, since after taking $\hat{e}$ the invasion graph contains a path from the origin to $C$ which does not cross $C'$ by definition.
\end{enumerate}
Now we choose any $p$ satisfying
\[
p' < p < \min\{t_e : e \in C'\},
\]
so that $C$ is $p$-open but $C'$ is $p$-closed.
\end{proof}

Given the preceding Lemma characterizing the one dual closed arm event in the invasion, our strategy to derive \eqref{eq: to_prove_kesten_zhang} is as follows. First, we claim that the event $A_{\sigma_k}(n)$ in the invasion implies the event $A_{\hat{\sigma}_k}(|\sigma_k|,n,p_c)$. (Here $\hat{\sigma}_k$ is the sequence of $k$ `$C$' entries and zero `$O$' entries, so the parameter $p_c$ is only for the closed arms.)  To see this, if $k=1$, then $A_{\hat{\sigma}_k}(|\sigma_k|,n,p_c)^c$ means (by duality) that there is a $p_c$-open circuit around $0$ in $Ann(|\sigma_k|,n)$, and any such circuit must be invaded, so it is in $S$, the invasion graph, implying the event $A_{\sigma_k}(n)^c$. In the case $k=2$, $A_{\hat{\sigma}_k}(|\sigma_k|,n,p_c)^c$ means that there is a $p_c$-open circuit around 0 with at most one defect. But this circuit (minus its defect) will also have to be in the invasion, implying $A_{\sigma_k}(n)^c$.

Given the above claim, we write
\begin{align}
\mathbb{P}(A_{\sigma_k}(n)) &= \mathbb{P}(A_{\sigma_k}(n) \mid A_{\hat{\sigma}_k}(|\sigma_k|,n,p_c)) \mathbb{P}(A_{\hat{\sigma}_k}(|\sigma_k|,n,p_c)) \nonumber \\
&\leq C\mathbb{P}(A_{\sigma_k}(n) \mid A_{\hat{\sigma}_k}(|\sigma_k|,n,p_c)) \mathbb{P}_{cr} (A_{\hat{\sigma}_k}(n)) \label{eq: hamburger_cheeseburger},
\end{align}
and aim to show that the conditional probability is small. To do this, we will show that in each annulus $Ann(j) = Ann(3^j,3^{j+1})$, there is a uniformly positive conditional probability (given {{$\mathbb{P}$}} $A_{\hat{\sigma}_k}( |\sigma_k|,n,p_c)$) that there is $p> p_c$ such that there is a $p$-open circuit around zero enclosed by a $p$-closed circuit. We then must show that these events for different $j$ are almost independent, so that with probability exponentially high in the order $\log n$ number of annuli, at least one of them occurs, which forces $A_{\sigma_k}(n)$ not to occur. The independence claims will take some work to prove, and we will follow the strategy from Kesten-Zhang \cite[Theorem~2]{kestenzhang}. We will not look in every annulus $Ann(j)$, but in a constant fraction of them, and we will have to insulate different annuli from each other using $p_c$-closed circuits.

Proceeding with the details, for $K \geq 1$ (which will later be of order $\log_3 n$), define the random set
\[
J = \{j \in [1, K) : \exists ~p_c\text{-closed dual circuit around 0 both in } Ann(3j) \text{ and } Ann(3j+2)\}.
\]
Let $N = \#J$. By the FKG inequality, for any $C>0$,
\[
\mathbb{P}(A_{\hat{\sigma}_k}(|\sigma_k|,3^{3K}, p_c), N \leq CK) \leq \mathbb{P}(A_{\hat{\sigma}_k}(|\sigma_k|,3^{3K},p_c)) \mathbb{P}(N \leq CK).
\]
Since occurrences of $j \in J$ are independent for distinct $j$ RSW and standard concentration estimates show there are $C,a>0$ such that
\[
\mathbb{P}(N \leq CK) \leq e^{-aK}.
\]
Thus
\[
\mathbb{P}(A_{\hat{\sigma}_k}(|\sigma_k|,3^{3K},p_c), N \leq CK) \leq e^{-aK} \mathbb{P}(A_{\hat{\sigma}_k}(|\sigma_k|,3^{3K},p_c)).
\]
Using this, compute
\begin{align}
&\mathbb{P}(A_{\sigma_k}(3^{3K})) \nonumber \\
=~& \mathbb{P}(A_{\sigma_k}(3^{3K}), A_{\hat{\sigma}_k}(|\sigma_k|,3^{3K},p_c)) \nonumber \\
\leq~& \mathbb{P}(A_{\sigma_k}(3^{3K}), A_{\hat{\sigma}_k}(|\sigma_k|,3^{3K},p_c), N > CK) +e^{-aK} \mathbb{P}(A_{\hat{\sigma}_k}(|\sigma_k|,3^{3K},p_c)) \label{eq: taco_head}.
\end{align}
Looking back at \eqref{eq: hamburger_cheeseburger}, our goal is now to show that for some possibly smaller $a>0$,
\begin{equation}\label{eq: our_goal_nachos}
\mathbb{P}(A_{\sigma_k}(3^{3K}), A_{\hat{\sigma}_k}(|\sigma_k|,3^{3K},p_c), N > CK) \leq e^{-aK} \mathbb{P}(A_{\hat{\sigma}_k}(|\sigma_k|,3^{3K},p_c)).
\end{equation}
The purpose of including this event $\{N> CK\}$ is that we will need to use at least $CK$ dual circuits to decouple order $K$ annuli from each other. In these annuli we will build $p$-open and $p$-closed circuits, for different values of $p$.

For $j \in [1, K)$, let $E_j$ be the event that there is a $p_{3^{3j}}$-open circuit $C$ around 0 in $Ann(3j+1)$ and a $p_{3^{3j}}$-closed dual circuit around $C$ in $Ann(3j+1)$. Then by the previous lemma, one has 
\[
A_{\sigma_k}(3^{3K}) \subset \cap_{j \in J} E_j^c,
\]
where, as $J$ is random, $\cap_{j \in J}E_j^c$ is short-hand for $\bigcup_{\hat J} \left( (\cap_{j \in \hat J} E_j^c) \cap \{J = \hat J\}\right)$. So to prove \eqref{eq: our_goal_nachos}, we must show 
\begin{equation}\label{eq: taco_to_show}
\mathbb{P}\left( \cap_{j \in J} E_j^c, N > CK \mid A_{\hat{\sigma}_k}(|\sigma_k|,3^{3K},p_c) \right) \leq e^{-aK}.
\end{equation}

On the event $j \in J$, we can define $\mathcal{C}_{3j}$ and $\mathcal{D}_{3j+2}$ as the innermost and outermost $p_c$-closed dual circuits around 0 in the annuli $Ann(3j)$ and $Ann(3j+2)$ respectively. We obtain the decomposition
\begin{align*}
&\mathbb{P}(\cap_{j \in J} E_j^c, N > CK, A_{\hat{\sigma}_k}(|\sigma_k|,3^{3K},p_c)) \\
=~&\sum_{\hat J : \# \hat J > CK} \sum_{(C_{3j},D_{3j+2} : j \in \hat J)} \mathbb{P}\left(
\begin{array}{c}
\cap_{j \in J} E_j^c, ~J=\hat J, A_{\hat{\sigma}_k}(|\sigma_k|,3^{3K},p_c), ~\mathcal{C}_{3j} = C_{3j} \\
\text{ and } \mathcal{D}_{3j+2} = D_{3j+2} \text{ for } j \in \hat J.
\end{array} \right)
\end{align*}
Using independence, the inner probability is decomposed as the following (large) product, where we have written
\[
J = \{j(1) < \cdots < j(N)\}.
\]
Namely,
\begin{align}
&\mathbb{P}( \partial B(|\sigma_k|) \to_{\hat{\sigma}_k} C_{3j(1)}, ~1, \ldots, j(1)-1 \notin J, ~\mathcal{C}_{3j(1)} = C_{3j(1)}) \nonumber \\
\times~& \prod_{\ell =1}^N \mathbb{P}(C_{j(\ell)} \to_{\hat{\sigma}_k} D_{3j(\ell)+2},~E_{j(\ell)}^c) \label{eq: deep_inside}\\
\times~& \prod_{\ell=1}^{N-1} \mathbb{P}\left( 
\begin{array}{c}
D_{3j(\ell)+2} \to_{\hat{\sigma}_k} C_{3j(\ell+1)},~ \mathcal{D}_{3j(\ell)+2} = D_{3j(\ell)+2}\\
\mathcal{C}_{3j(\ell+1)} = C_{3j(\ell+1)},~ j(\ell)+1, \ldots, j(\ell+1)-1 \notin J
\end{array}\right) \nonumber \\
\times~& \mathbb{P}(D_{3j(N)+2} \to_{\hat{\sigma}_k} \partial B(3^{3K}),~ \mathcal{D}_{3j(N)+2} = D_{3j(N)+2},~ j(N)+1, \ldots \notin J)\nonumber .
\end{align}

The core of the argument, then, is to address the factors in \eqref{eq: deep_inside} and show:
\begin{prop}\label{prop: another_deep_inside}
There exists $\lambda < 1$ such that for all $j$, all self-avoiding dual circuits $C$ around 0 in $Ann(3j)$, and all self-avoiding dual circuits $D$ around 0 in $Ann(3j+2)$,
\[
\mathbb{P}(E_j^c \mid C \to_{\hat{\sigma}_k} D) \leq \lambda.
\]
\end{prop}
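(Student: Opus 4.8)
The plan is to prove the equivalent lower bound $\mathbb{P}\big(E_j\cap\{C\to_{\hat\sigma_k}D\}\big)\ge c\,\mathbb{P}(C\to_{\hat\sigma_k}D)$ for a constant $c>0$ uniform in $j,C,D$, which then gives the claim with $\lambda=1-c$. Write $m=3^{3j}$ and $p=p_{3^{3j}}$, so that $Ann(3j+1)=Ann(3m,9m)$ and, by \eqref{eq: D_p_n}, $Dm\le L(p)\le m$. Fix the concentric sub-annuli $A_{\mathrm{in}}:=Ann(3m,5m)$ and $A_{\mathrm{out}}:=Ann(7m,9m)$ of $Ann(3j+1)$. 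A $p$-open primal circuit around $0$ in $A_{\mathrm{in}}$ together with a $p$-closed dual circuit around $0$ in $A_{\mathrm{out}}$ witnesses $E_j$, since the latter encloses the former; so I would bound $\mathbb{P}\big(E_j\cap\{C\to_{\hat\sigma_k}D\}\big)$ below by the probability of the intersection of these two circuit events with $\{C\to_{\hat\sigma_k}D\}$. Using quasi-multiplicativity \eqref{eq:quasmult} of the critical arm event $\{C\to_{\hat\sigma_k}D\}$ (whose arms are $p_c$-closed dual paths, and $L(p_c)=\infty$), together with the standard arm-separation machinery, I would factor $\mathbb{P}(C\to_{\hat\sigma_k}D)$ through the scales $3m<5m<7m<9m$ with uniformly bounded gluing constants and fixed macroscopic landing sites on each $\partial B(\cdot)$; all five factors are at scale $\asymp m$. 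The same factorization applies to the intersection above, with the $A_{\mathrm{in}}$-factor replaced by ``$k$ disjoint $p_c$-closed dual crossings of $A_{\mathrm{in}}$ joining the prescribed landing sites \emph{and} a $p$-open circuit around $0$ in $A_{\mathrm{in}}$'' and the $A_{\mathrm{out}}$-factor replaced by the analogous event with ``$\ldots$ and a $p$-closed dual circuit around $0$ in $A_{\mathrm{out}}$''. Hence it suffices to show that, in each of $A_{\mathrm{in}}$ and $A_{\mathrm{out}}$, adding the relevant circuit costs only a constant factor.

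The $A_{\mathrm{out}}$-factor is the easy one. Both ``$k$ disjoint $p_c$-closed dual crossings of $A_{\mathrm{out}}$'' and ``there is a $p$-closed dual circuit around $0$ in $A_{\mathrm{out}}$'' are increasing events in the weights $(t_e)$, so by Harris--FKG their intersection has probability at least the product of the two. The second factor is bounded below uniformly: $A_{\mathrm{out}}$ is an annulus of scale $\asymp m\asymp L(p)$, so covering it by $O(1/D)$ boxes of side $\asymp L(p)$ and applying the RSW estimate up to the correlation length \eqref{eq: rswstate2} (in its dual form, via the open-crossing/closed-dual-crossing duality recalled after \eqref{eq:rswstate2}) plus gluing produces such a circuit with probability $\ge c_{\mathrm{out}}>0$. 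So this factor costs at most a constant.

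The real content is the $A_{\mathrm{in}}$-factor: I must exhibit $c>0$ with
\[
\mathbb{P}\big(\{k\text{ disjoint }p_c\text{-closed dual crossings of }A_{\mathrm{in}}\text{ through the landing sites}\}\cap\{\exists\,p\text{-open circuit around }0\text{ in }A_{\mathrm{in}}\}\big)\ \ge\ c,
\]
the first event alone having probability $\asymp 1$. Here the two events have opposite monotonicity, so FKG is useless; this is where I would use the decoupling idea of Kesten--Zhang. Let $\mathcal{Q}$ be a fixed window — a box of side $\asymp L(p)$ straddling $A_{\mathrm{in}}$ near the positive $x$-axis, chosen to contain the prescribed landing sites on $\partial B(3m)$ and $\partial B(5m)$. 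I would construct, with uniform positive probability: (a) inside $\mathcal{Q}$, the $k$ disjoint $p_c$-closed dual paths crossing $A_{\mathrm{in}}$ radially and joining the landing sites, \emph{together with} a $p$-open path crossing $\mathcal{Q}$ in the angular direction between two prescribed landing segments on its angular sides (the one nontrivial point, discussed next); (b) $p_c$-open ``fences'' along the two angular sides of $\mathcal{Q}$; and (c) a $p_c$-open (hence $p$-open) arc around $0$ running through the free region $A_{\mathrm{in}}\setminus\mathcal{Q}$ and joining the two fences to the endpoints of the angular crossing from (a). Items (b) and (c) are routine RSW and gluing at $p_c$ in the region $A_{\mathrm{in}}\setminus\mathcal{Q}$, which is free once the configuration on $\mathcal{Q}$ has been revealed; they have constant probability and are glued by FKG since both are $p_c$-open (decreasing) events with fixed landing sites. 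Concatenating (a)--(c) yields a $p$-open circuit around $0$ in $A_{\mathrm{in}}$ coexisting with the $k$ closed crossings: a $p$-open primal path and a $p_c$-closed dual path are \emph{not} in topological conflict when $p>p_c$, since an edge with weight $t_e\in[p_c,p)$ lies on both, and the angular crossing meets the $k$ radial crossings at (at most $k$) such edges.

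\textbf{The main obstacle} is therefore the local coexistence estimate (a): in the box $\mathcal{Q}$ of side $M\asymp L(p)$, with uniform positive probability there are $k$ disjoint $p_c$-closed dual radial crossings \emph{and} a $p$-open angular crossing joining the prescribed boundary segments. I would prove it by revealing the $p_c$-percolation configuration on $\mathcal{Q}$ and conditioning on a realization that carries the $k$ disjoint $p_c$-closed dual radial crossings but no $p_c$-open angular crossing joining the landing segments — an event of probability $\ge c_3>0$ by RSW at $p_c$. Given such a configuration, standard pivotal estimates for box crossings give that, with probability $\ge c_3'>0$, the number $N$ of edges pivotal for the angular crossing (all of which are $p_c$-closed in this configuration) is at least $c_4\,M^2\,\mathbb{P}_{cr}(A_\tau(M))$, $\tau=(OCOC)$; and, conditionally on the $p_c$-configuration, each $p_c$-closed edge independently has $t_e\in[p_c,p)$ with probability $\tfrac{p-p_c}{1-p_c}$, independently of $N$. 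Resetting any one pivotal edge to such a value completes a $p$-open angular crossing joining the landing segments, while leaving the $k$ radial crossings intact (the resampled edge remains $p_c$-closed). Hence (a) has probability at least
\[
c_3\,c_3'\left(1-\Big(1-\tfrac{p-p_c}{1-p_c}\Big)^{\,c_4\,M^2\,\mathbb{P}_{cr}(A_\tau(M))}\right)\ \ge\ c_3\,c_3'\left(1-\exp\!\big(-c_5\,(p-p_c)\,M^2\,\mathbb{P}_{cr}(A_\tau(M))\big)\right).
\]
Since $M\asymp L(p)\asymp m$ and $p=p_m$, quasi-multiplicativity of the four-arm probability and the scaling relation $m^2\,\mathbb{P}_{cr}(A_\tau(m))\,(p_m-p_c)\asymp 1$ (\cite[Prop.~34]{nolin}, as already used in Lemma~\ref{lma: outlet_construction}) bound the exponent below by a positive constant, so (a) holds with uniform positive probability and the argument closes (for $k\ge 2$ one simply iterates the single-crossing step over $k$ disjoint sub-windows of $\mathcal{Q}$). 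I expect the genuine difficulty to be exactly this step — controlling the pivotal count uniformly over the revealed $p_c$-configurations carrying the $k$ radial crossings, and assembling it cleanly with the scaling relation; the remaining ingredients are the RSW, gluing, arm-separation and quasi-multiplicativity arguments already in use in the paper, though carrying them out with the circuits $C,D$ as endpoints and prescribed landing sites will be notationally heavy.
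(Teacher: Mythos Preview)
Your overall architecture is close to the paper's: both arguments route the $\hat\sigma_k$-arms through a fixed window, use an edge with weight in $(p_c,p)$ as the place where a $p_c$-closed dual path and a $p$-open path can cross, and feed in the scaling relation $M^2\,\mathbb{P}_{cr}(A_\tau(M))\,(p_M-p_c)\asymp 1$. Your treatment of the $p$-closed dual circuit in $A_{\mathrm{out}}$ via FKG is essentially what the paper does in its outer thin annulus.

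The genuine problems are in step~(a). First, for $k=2$ the pivotal argument collapses entirely: if the $p_c$-configuration carries two \emph{disjoint} $p_c$-closed dual radial crossings of $\mathcal{Q}$, then flipping any single edge can destroy at most one of them, so there is still a $p_c$-closed dual radial crossing and hence no $p_c$-open angular crossing. In other words $N=0$ deterministically on your conditioning event when $k=2$, and the bound $1-(1-q)^N$ gives nothing. Your one-line ``iterate over $k$ sub-windows'' does not repair this as stated: you would need two separate defect edges (one per closed arm) and a way to thread a single $p$-open angular path through both, which is exactly what the paper builds explicitly with the two boxes $B_j$ and $B_j^*$.

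Second, even for $k=1$, the step ``standard pivotal estimates give $N\ge c_4 M^2\mathbb{P}_{cr}(A_\tau(M))$ with conditional probability $\ge c_3'$'' is not standard. The second-moment bound yields this unconditionally (or conditional on the crossing event alone), but you are conditioning in addition on the increasing event ``there is a $p_c$-closed dual radial crossing''. Since pivotals require two macroscopic $p_c$-open arms, the event $\{N\ge\ell\}$ is not monotone, and there is no FKG shortcut; establishing the conditional lower bound would require real work. The paper sidesteps this completely: instead of counting pivotals, it defines for each edge $e$ the event $K_{j,e}$ that $t_e\in(p_c,p)$ and $e$ carries two $p_c$-open arms (to the top and bottom of the window) and two $p$-closed dual arms (to the sides). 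Because the closed arms are taken $p$-closed rather than $p_c$-closed, the events $K_{j,e}$ are pairwise \emph{disjoint}, so $\mathbb{P}(\cup_e K_{j,e})=\sum_e\mathbb{P}(K_{j,e})\asymp M^2\mathbb{P}_{cr}(A_\tau(M))(p-p_c)\asymp 1$. This directly produces the defect edge with its four arms, and the closed arms of that edge \emph{are} the $\hat\sigma_k$-arm through the window --- so no conditioning on a pre-existing closed crossing is needed. For $k=2$ the same construction is repeated in a reflected box, giving two defects.

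In short: replace the conditional pivotal-count step by the paper's disjoint four-arm-edge construction (one such edge per required closed arm), and the rest of your outline goes through.
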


\begin{proof}
  We will argue by finding some sequence of events $(\Xi_j)$ with $\Xi_j \subseteq E_j$ and some $\epsilon$ uniform in $j, C, D$ such that $\mathbb{P} \left(\Xi_j \mid C \to_{\hat{\sigma}_k} D \right) \geq \epsilon$. Define the subboxes $B_j, \, B_j'$ of $Ann(3j + 1)$ by
\begin{align*}
B_j &= 3^{3j+1}(1 + 3^{-3}) e_1 + B(3^{3j-3})\\
B_j' &= 3^{3j+1}(1 + 3^{-3}) e_1 + B(3^{3j-4})\ .
\end{align*}
The closed arm(s) connecting $C$ to $D$ in the event $\{C \to_{\hat{\sigma}_k}D\}$ will be routed through $B_j$ to cross in $B_j'$ a four-arm edge, whose existence is furnished by the following claim.

\begin{claim}
  There exists $\delta > 0$ uniform in $j$ such that the following holds. Let $K_j$ be the event that there is a vertical edge $e = \{x, x + e_2\} \in B_j'$ with $t_e \in (p_c, p_{3^{3j}})$ having the following disjoint connections.
  \begin{itemize}
  \item A $p_c$-open arm from $x$ to the central portion of the bottom side of $B_j$ .
  \item A $p_{c}$-open arm from $x + e_2$ to the top side of $B_j$.
  \item A $p_c$-closed dual arm from each vertex of $e^*$, to the left and right sides of $B_j$ respectively.
  \end{itemize}
\end{claim}
\begin{proof}[Proof of claim]
Let $K_{j, e}$ denote the probability that the above connections occur from a particular edge $e \in B'_j$ (with the specified bounds on its edge weight), but now with the $p_c$-closed arms mandated to be $p_{3^{3j}}$-closed, and set
\[
N_j = \sum_{e \in B'_j} \mathbf{1}_{K_{j,e}}. 
\]
Standard arm-direction techniques \cite{kestenscaling} show as $j \to \infty$,
\[
\mathbb{P}(K_{j,e}) \asymp \mathbb{P}(A_{\tau}(3^{3j},p_c,p_{3^{3j}})) (p_{3^{3j}} - p_c)\ \text{uniformly in } e, 
\]
where $\tau$ is the sequence $(OCOC)$. Using \eqref{eq: changearms} and summing over $e$, we obtain 
\begin{align*}
  \mathbb{E}[N_j] \asymp (3^{3j})^2 \mathbb{P}_{cr}(A_{\tau}(3^{3j})) (p_{3^{3j}}-p_c).
\end{align*}
On the other hand, \cite[Prop. 34]{nolin} gives that
\[
(3^{3j})^2 \mathbb{P}_{cr}(A_{\tau}(3^{3j})) (p_{3^{3j}} - p_c) \asymp 1, 
\]
so $\mathbb{E}N_j \asymp 1$. 

Now note that for $e \neq e'$, the events $K_{j,e}$ and $K_{j,e'}$ are disjoint, so
\[
\mathbb{P}(\cup_e K_{j,e}) = \sum_e \mathbb{P}(K_{j,e}) = \mathbb{E}N_j \geq C
\]
for some $C>0$. Since $K_{j,e}$ implies the event in the claim, this completes the proof.

\end{proof}

We use the event $K_j$ to construct $\Xi_j$. Rather than give a precise construction, we just give the outline and a picture. Assume that the event $K_j$ occurs (along with all the conditioned connections outside of the annulus appearing in \eqref{eq: deep_inside}), and that a reflected version $K_j^*$ occurs, where this event is $K_j$ reflected about the $e_2$-axis, in corresponding reflected boxes $B_j^*$ and $(B_j')^*$. Then by independence and the previous lemma, $\mathbb{P}(K_j \cap K_j^*) > 0$ uniformly in $j$.

 By arm direction techniques \cite{kestenscaling} and gluing arguments, the closed dual arms from $e \in B'_j$ (and from an edge $f$ in the reflected $(B'_j)^*$ described in the event $K_j^*$) can be extended disjointly from their endpoints on $B_j$ and $B_j^*$ through the strip $\mathbb{R} \times [-3^{3j-3},3^{3j-3}]$ to connect to the circuits $C$ and $D$ listed in Proposition~\ref{prop: another_deep_inside} using the RSW theorem, with a net cost of at most a uniform constant in probability. Next, the $p_c$-open arms from $e$ and $f$ ending on the top and bottom boundaries of $B_j$ and $B_j^*$ can be completed into a $p_c$-open circuit with two defects ($e$ and $f$) around 0 in the thin annulus 
\[
Ann(3^{3j+1}(1+3^{-3}) - 3^{3j-3}, 3^{3j+1}(1+3^{-3}) + 3^{3j-3}).
\]
(This annulus is defined so that its inner and outer boundaries coincide with the left and right boundaries of $B_j$.) Using the generalized FKG inequality and the RSW theorem, this will also have at most a uniform constant cost. Note that the resulting $p_c$-open circuit with two defects is also $p_{3^{3j}}$-open. A larger thin annulus, still contained in $Ann(3j+1)$, will also be forced to contain a $p_{3^{3j}}$-closed circuit around 0. For specificity, we take this annulus to be
\[
Ann(3^{3j+1}(1+3^{-3}) + 3^{3j-3}, 3^{3j+1}(1+3^{-3}) + 2\cdot 3^{3j-3}).
\]
Using FKG and RSW again, this can be accomplished at cost of at  most another constant factor in probability. The final event illustrated in Figure~\ref{fig:fig_extend} is denoted $\Xi_j$. 

Whether $k=1$ or $2$, one has $\Xi_j \subset \{C \to_{\hat{\sigma}_k} D\} \cap E_j\ .$ The above line of reasoning also gives that $\Pr(\Xi_j) \geq \epsilon$ for $\epsilon$ sufficiently small, uniformly in $j$ and $C,\,D$. Taking $\lambda = 1 - \epsilon$ proves the claim.

\end{proof}

\begin{figure}
\centering
\includegraphics[scale = 0.35]{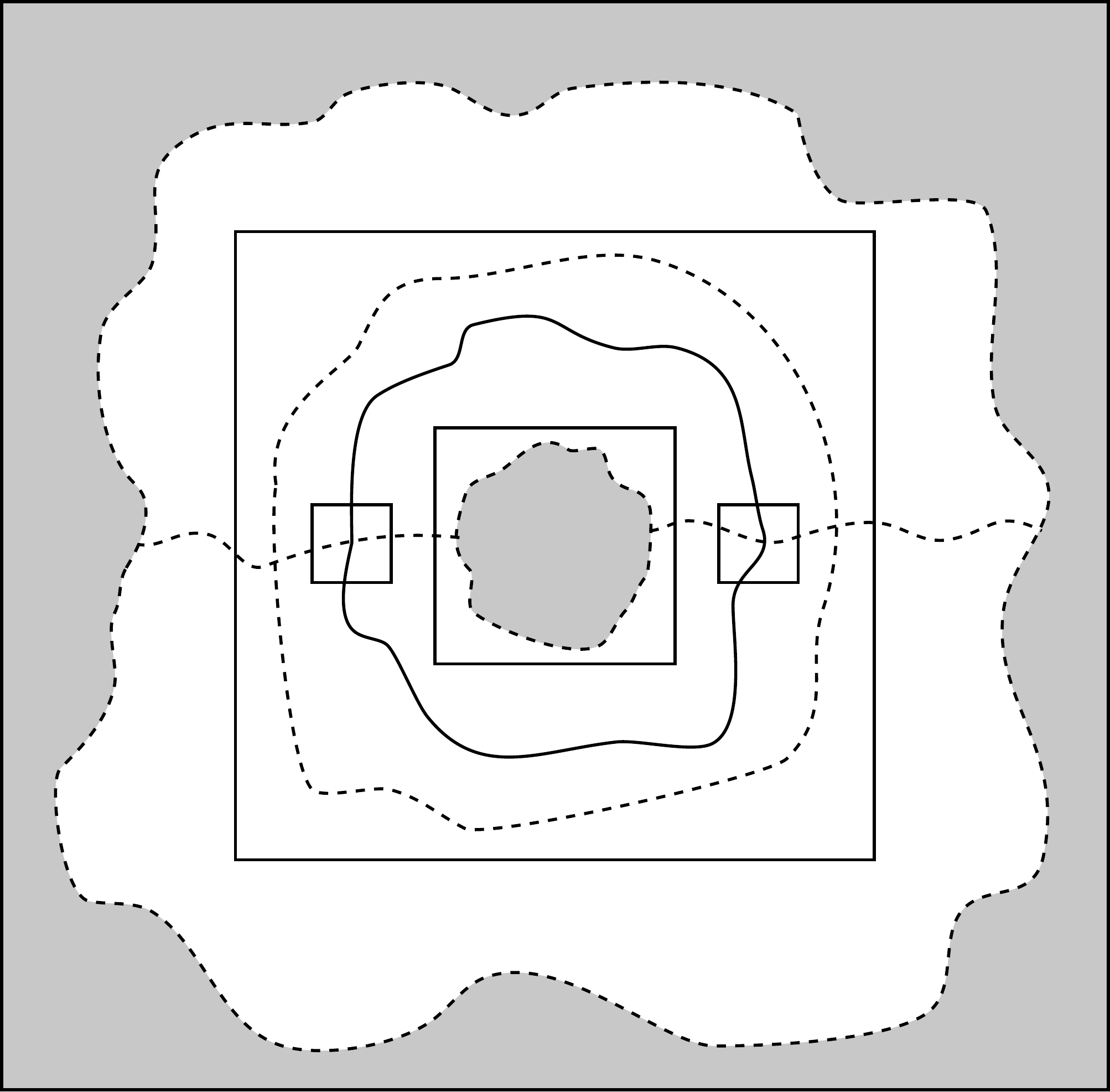}
\caption{Depiction of the extension to produce $\Xi_j$. The annulus $Ann(3j+1)$ is the region contained between the two smaller squares. The circuits $C$ and $D$ are the inner and outer dashed circuits; depicted as boundaries of shaded regions. These shaded regions represent parts of the configuration on which we have conditioned in our construction.}
\label{fig:fig_extend}
\end{figure}

At last, we return to \eqref{eq: deep_inside}, using the bound
\[
\mathbb{P}(C_{j(\ell)} \to_{\hat{\sigma}_k} D_{3j(\ell)+2}, E_{j(\ell)}^c) \leq \lambda \mathbb{P}(C_{j(\ell)} \to_{\hat{\sigma}_k} D_{3j(\ell)+2}),
\] 
and recombine everything (taking $a = -C \log \lambda$), to obtain the upper bound
\begin{align*}
&\mathbb{P}(\cap_{j \in J} E_j^c, N > CK, A_{\hat{\sigma}_k}(|\sigma_k|,3^{3K},p_c)) \\
\leq~&\sum_{\hat J : \# \hat J > CK} \sum_{(C_{3j},D_{3j+2} : j \in \hat J)} \lambda^{\# \hat J} \mathbb{P}\left(
\begin{array}{c}
J=\hat J, ~A_{\hat{\sigma}_k}(|\sigma_k|,3^{3K},p_c), ~\mathcal{C}_{3j} = C_{3j} \\
\text{ and } \mathcal{D}_{3j+2} = D_{3j+2} \text{ for } j \in \hat J.
\end{array} \right) \\
\leq~& \lambda^{CK} \mathbb{P}(A_{\hat{\sigma}_k}(|\sigma_k|,3^{3K},p_c), N > CK) \\
\leq~& e^{-aK} \mathbb{P}(A_{\hat{\sigma}_k}(|\sigma_k|,3^{3K},p_c)),
\end{align*}
which is \eqref{eq: taco_to_show}. This therefore proves \eqref{eq: our_goal_nachos}. Plugging into \eqref{eq: taco_head}, one obtains
\[
\mathbb{P}(A_{\sigma_k}(3^{3K})) \leq e^{-aK} \mathbb{P}(A_{\hat{\sigma}_k}(|\sigma_k|,3^{3K},p_c)) \leq C e^{-aK} \mathbb{P}(A_{\hat{\sigma}_k}(3^{3K})),
\]
and this is our version of a bound on \eqref{eq: hamburger_cheeseburger}.

To complete the proof of the main bound \eqref{eq: to_prove_kesten_zhang}, for a given $n$, take $K = \lfloor \frac{1}{3} \log_3 n \rfloor$, and use quasimultiplicativity:
\begin{align*}
\mathbb{P}(A_{\sigma_k}(n)) \leq \mathbb{P}(A_{\sigma_k}(3^{3K})) &\leq C e^{-aK} \mathbb{P}(A_{\hat{\sigma}_k}(3^{3K})) \\
&\leq C n^{-\epsilon} \mathbb{P}(A_{\hat{\sigma}_k}(n))
\end{align*}
for suitable $\epsilon>0$.
\end{proof}



\end{document}